\newtheorem{thm}{Theorem}[section]
\newtheorem{prop}[thm]{Proposition}
\newtheorem{lem}[thm]{Lemma}
\newtheorem{cor}[thm]{Corollary}
\theoremstyle{definition}
\theoremstyle{remark}
\newtheorem{remark}[thm]{Remark}
\newtheorem{hyp}[thm]{Hypothesis}
\numberwithin{equation}{section}
\newcommand{\R}{\mathbf{R}}  % The real numbers.
\DeclareMathOperator{\diff}{\mathcal{D}}
\DeclareMathOperator{\lien}{\mathfrak{n}}
\DeclareMathOperator{\lieg}{\mathfrak{g}}
\DeclareMathOperator{\liet}{\mathfrak{t}}
\DeclareMathOperator{\liez}{\mathfrak{z}}
\DeclareMathOperator{\lieb}{\mathfrak{b}}
\DeclareMathOperator{\cO}{\mathcal{O}}
\DeclareMathOperator{\cK}{\mathcal{K}}
\DeclareMathOperator{\cI}{\mathcal{I}}
\DeclareMathOperator{\cG}{\mathcal{G}}
\DeclareMathOperator{\cT}{\mathcal{T}}
\DeclareMathOperator{\stab}{Stab}
\DeclareMathOperator{\lie}{Lie}
\DeclareMathOperator{\spec}{Spec}
\DeclareMathOperator{\bX}{\mathbb{X}}
\DeclareMathOperator{\bC}{\mathbb{C}}
\DeclareMathOperator{\bG}{\mathbb{G}}
\DeclareMathOperator{\bA}{\mathbb{A}}
\newcommand{\colim@}[2]{%
  \vtop{\m@th\ialign{##\cr
    \hfil$#1\operator@font colim$\hfil\cr
    \noalign{\nointerlineskip\kern1.5\ex@}#2\cr
    \noalign{\nointerlineskip\kern-\ex@}\cr}}%
}
\newcommand{\colim}{%
  \mathop{\mathpalette\colim@{\rightarrowfill@\textstyle}}\nmlimits@
}
\newcommand{\nlim@}[2]{%
  \vtop{\m@th\ialign{##\cr
    \hfil$#1\operator@font lim$\hfil\cr
    \noalign{\nointerlineskip\kern1.5\ex@}#2\cr
    \noalign{\nointerlineskip\kern-\ex@}\cr}}%
}
\newcommand{\nlim}{%
  \mathop{\mathpalette\nlim@{\leftarrowfill@\textstyle}}\nmlimits@
}
\newcommand{\dashdownarrow}{\raisebox{2.0ex}{\rotatebox{-90}{$\dashrightarrow$}}}
\begin{document}

\title{A Fourier transform for the quantum Toda lattice}

\author{Gus Lonergan}
\address{Department of Mathematics, Massachusetts Institute of Technology, 
Cambridge, MA 02139}
\email{gusl@mit.edu}
%\urladdr{www.math.sc.edu/$\sim$howard} 

%\author{Second Author}
%\address{Department of Mathematics, University of South Carolina,
%Columbia, SC 29208}
%\email{second@math.sc.edu}
%\urladdr{www.math.sc.edu/$\sim$second}

\begin{abstract}
We introduce an algebraic Fourier transform for the quantum Toda lattice. 
\end{abstract}

 \maketitle
\setcounter{tocdepth}{1}
\tableofcontents

%\tableofcontents

%\begin{thm} The square of any real number is non-negative.
%\end{thm}

%\begin{proof}
%Any real number $x$ satisfies $x>0$, $x=0$, or $x<0$.
%If $x=0$, then $x^2=0\ge 0$.  If $x>0$ then as a positive time a
%positive is positive we have $x^2=xx>0$.  If $x<0$ then $-x>0$ and so
%by what we have just done $x^2=(-x)^2>0$.  So in all cases $x^2\ge0$.
%\end{proof}

\section{Introduction}

\subsection{The Toda lattice.}\label{toda}Following \cite{kost}, let $G$ be a complex reductive algebraic group and denote by $Toda_1(G)$ the partially compactified quantum Toda lattice of $G$. By definition, this is the two-sided\footnote{The epithet `two-sided' refers to the use of both the left- and the right-regular actions of $N$ on $G$.} quantum Hamiltonian reduction of $\diff(G)$ with respect to a generic character $\psi$ of a maximal unipotent subgroup $N$. In a formula, we have
$$
Toda_1(G)=(\diff(G)/((l-\psi)(\lien)+(r-\psi)(\lien))\diff(G))^{N\times N}
$$
where $l$, resp. $r$ are the embeddings of $\lieg$ in $\diff(G)$ as left-, resp. right-invariant vector fields. Here $\lieg=\lie(G)$, $\lien=\lie(N)$ (and this pattern will continue). It is naturally a Hopf algebroid over $\cO(\lieg^*//G)$. A different choice of $N,\psi$ gives a canonically isomorphic Hopf algebroid (justifying the definite article). Kostant's classic result gives a canonical isomorphism between $Toda_1(G)$ and the quantum Hamiltonian reduction of $\diff(G)$ with respect to the trivial character of $G$ itself, acting adjointly.

The order filtration on $\diff(G)$ induces\footnote{This is true for the presentation as $\diff(G)//_{triv}G$. For the presentation as $N_\psi\backslash\backslash\diff(G)//_\psi N$ one needs to adjust this filtration by the $\rho^\vee$-weight.} a filtration on $Toda_1(G)$. The base $\cO(\lieg^*//G)$, viewed as a subalgebra of $Toda_1(G)$, is canonically isomorphic to its associated graded and one thus obtains an associated graded Hopf algebroid (over the same base). This is the partially compactified Toda lattice and will be denoted $Toda_0(G)$. In fact, it is a commutative Hopf algebra over $\cO(\lieg^*//G)$, and its corresponding group scheme is canonically identified with the flat abelian group scheme $\liez_{\psi+\lien^\perp}^G/N$ over $(\psi+\lien^\perp)/N\cong \lieg^*//G$ \footnote{In fact, $\liez_{\psi+\lien^\perp}^G$ is itself a flat abelian group scheme over $\psi+\lien^\perp$, on which base $N$ acts freely, justifying the notation `$/N$' rather than `$//N$'. It is customary to trivialize the $N$-torsor $\psi+\lien^\perp\to \lieg^*//G$. The resulting section $\kappa$ is called `the' Kostant slice, and we obtain $\spec(Toda_0(G))\cong \liez_\kappa^G$.}. Here $\lien^\perp$ denotes the orthogonal complement to $\lien$ in $\lieg^*$. In this classical setting, Kostant's result identifies this group scheme with the adjoint quotient $\liez_{\lieg^*}^G//G$.

Let us now fix a maximal torus $T$ of the normalizer $B$ of $N$. We obtain an opposite subgroup $N_-$ to $N$, and a splitting $\lieg^*=\lien^*\times\liet^*\times\lien_-^*$. We can therefore form the subscheme $\psi+\liet^*\subset\psi+\lien^\perp$. This fits into a commutative diagram
$$
\begin{matrix}\psi+\liet^* &\xrightarrow{\sim}&\liet^*\\
\downarrow&&\downarrow\\
\lieg^*//G&\xrightarrow{\sim}&\liet^*//W\end{matrix}
$$   
and consequently we have an isomorphism $\liez_{\psi+\liet^*}^G\cong\liet^*\times_{\lieg^*//G}(\liez_{\psi+\lien^\perp}^G/N)$. In other words, there exists an action of $W$ on $\liez_{\psi+\liet^*}^G$, compatible with its usual action on the base $\liet^*$, and the geometric quotient is the spectrum of the partially compactified Toda lattice. Noting that $\psi+\liet^*$ is contained in $(\lieb^*)^{reg}$, we see that $\liez_{\psi+\liet^*}^G=\liez_{\psi+\liet^*}^B$. The resulting projection $\liez_{\psi+\liet^*}^G\to(\psi+\liet^*)\times T$ is an isomorphism over $\psi+(\liet^*)^{reg}$. This map is $W$-equivariant, where $W$ acts `diagonally' on $(\psi+\liet^*)\times T\cong \liet^*\times T$. All this goes to show that we have a map
\begin{align*}
Spec(Toda_0(G))\to (T^*T)//W
\end{align*}
which is an isomorphism generically over the base $\liet^*//W$.

This classical picture is quantizes in the natural manner, and in particular we have a map
\begin{align}
\diff(T)^W\to Toda_1(G)\label{comparison}
\end{align}
of Hopf algebroids over $\liet^*//W$, which is generically an isomorphism.

\subsection{The Fourier transform.}One is interested in understanding modules for $Toda_1(G)$. Restricting along \ref{comparison}, such a thing becomes a module for $\diff(T)^W$. Since $\diff(T)^W$ is Morita equivalent to $\diff(T)\#\bC W$, this is the same thing as a $W$-equivariant $\diff$-module on $T$. There is a well-known equivalence
\begin{align}
\diff(T)-mod\cong QCoh^{\bX^\bullet(T)}(\liet^*)
\end{align}
and likewise
\begin{align}
\diff(T)^W-mod\cong QCoh^{\widetilde{W}^{aff}}(\liet^*)\label{babyfourier}
\end{align}
where $\bX^\bullet(T)$ denotes the character lattice of $T$ and $\widetilde{W}^{aff}=\bX^\bullet(T)\#W$ is the partially-extended affine Weyl group\footnote{As opposed to the fully extended affine Weyl group, which is usually defined to be the group obtained in this manner starting from the universal cover of $G$. The affine Weyl group, $W^{aff}$, is the group obtained in this manner starting from the adjoint quotient of $G$.}. This equivalence may be regarded as an algebraic incarnation of the Fourier transform, but it is completely trivial when one writes out the definition of the categories to be related.

A natural question arises: is there a similar kind of `algebraic Fourier transform' for $Toda_1(G)$? That is the subject of this paper. In fact, we have
\begin{thm}\label{thm1}There exists an equivalence of categories
\begin{align}
Toda_1(G)-mod\cong QCoh^{\widetilde{W}^{aff}}(\liet^*)^{``\cI"}
\end{align}
which is compatible with \ref{babyfourier} in the natural way.\end{thm}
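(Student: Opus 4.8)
The plan is to bootstrap from the map \ref{comparison}, promoting the tautological equivalence \ref{babyfourier} to the Toda setting by analysing the discrepancy between $\diff(T)^W$ and $Toda_1(G)$ along the reflection hyperplanes. First I would pass to Morita-equivalent smash products. Just as $\diff(T)^W$ is Morita equivalent to $\diff(T)\#\bC W$, I would realise $Toda_1(G)$ as the ring of $W$-invariants of an explicit sheaf of algebras $\mathcal A$ on $\psi+\liet^*\cong\liet^*$ carrying a compatible $W$-action — equivalently, $Toda_1(G)$ is Morita equivalent to $\mathcal A\#\bC W$. Here $\mathcal A$ is the natural quantization of the regular centralizer group scheme $\liez_{\psi+\liet^*}^G$, so that on associated graded one recovers $\spec(Toda_0(G))=\liez_{\psi+\liet^*}^G/W$ from Section \ref{toda}; it can be produced by base-changing $Toda_1(G)$ along $\liet^*\to\liet^*//W$, or directly from the one-sided reduction $\diff(G)//_\psi N$ restricted along the slice $\psi+\liet^*\hookrightarrow\psi+\lien^\perp$. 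The tautological algebra map $\diff(T)\to\mathcal A$ is then $W$-equivariant, is an isomorphism over $(\liet^*)^{reg}$, and induces \ref{comparison} on $W$-invariants. Thus $Toda_1(G)-mod\cong(\mathcal A\#\bC W)-mod$ is the category of $W$-equivariant $\mathcal A$-modules, and it remains to Fourier-transform $\mathcal A-mod$ in the $T$-direction.

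Next I would carry out that Fourier transform, i.e. Cartier duality along the fibres of $T^*T\to\liet^*$. On associated graded the map $\diff(T)\to\mathcal A$ is the comorphism of the map of abelian group schemes $\liez_{\psi+\liet^*}^G\to\liet^*\times T$, which the classical discussion identifies as an isomorphism over $(\liet^*)^{reg}$ and, at each reflection hyperplane, as a dilatation (Néron blow-up): it replaces a $\bG_m$-factor by a $\bG_a$-factor along the wall divisor — e.g. in rank one $\liez_{\psi+\liet^*}^G$ is the dilatation of $\liet^*\times T$ along the center of the rank-one subgroup sitting over the wall. Passing to modules, the Fourier transform converts ``$\diff(T)$-module'' into ``$\bX^\bullet(T)$-equivariant quasicoherent sheaf on $\liet^*$'' (this is \ref{babyfourier} before taking $W$-invariants), and converts ``$\mathcal A$-module'' into the same data subject to a supplementary condition along the reflection hyperplanes reflecting the dilatation; this condition is by definition the decoration $``\cI"$. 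Re-imposing $W$-equivariance yields $QCoh^{\widetilde{W}^{aff}}(\liet^*)^{``\cI"}$, and the compatibility with \ref{babyfourier} is automatic, since under these identifications the map $\diff(T)^W\to Toda_1(G)$ corresponds to the forgetful functor that drops the $``\cI"$-condition.

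The content of the middle step is local on $\liet^*//W$, hence — using that the wall divisors are smooth and meet the generic stratum transversally, together with the combinatorics of $\widetilde{W}^{aff}$ — it reduces to the rank-one case, one wall at a time. For a rank-one group I would compute $\mathcal A$ and its module category by hand, equivalently compute $Toda_1$ of a rank-one group (a small and well-understood algebra), read off the Fourier transform of the dilatation directly, and thereby pin down $``\cI"$ and verify the equivalence there; the global statement then follows by a descent/gluing argument along the hyperplane arrangement.

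The hard part will be this rank-one analysis together with the bookkeeping needed to glue it. One must identify the quantization $\mathcal A$ of the regular centralizers precisely — including the $\rho^\vee$-shift of the order filtration flagged in the footnote, and the dependence on the isogeny type of $G$, which is exactly where the \emph{partially} extended affine Weyl group $\widetilde{W}^{aff}$ (as opposed to $W^{aff}$ or the fully extended group) enters through the choice $T\subset B\subset G$ — then show that the naive Fourier transform of an $\mathcal A$-module is computed by a local-to-global argument along the reflection hyperplanes, and finally check that the resulting wall conditions assemble into a single, manifestly $\widetilde{W}^{aff}$-equivariant decoration $``\cI"$. Granting this, the equivalence of the theorem and its compatibility with \ref{babyfourier} follow formally.
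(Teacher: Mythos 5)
Your route is genuinely different from the paper's --- the paper proves Theorem \ref{thm1} by identifying $Toda_\hbar(G)$ with $H_\bullet^{G^\vee\rtimes\bG_m}(Gr)$ (Theorem \ref{bez}), passing to $I^\vee$-equivariance on the affine flag variety, computing the resulting groupoid $\cT$ by GKM localization (Lemma \ref{imaget}), and then invoking the general groupoid-descent machinery of Proposition \ref{bigprop} --- and as a heuristic your picture is reasonable. But there are two gaps that are not bookkeeping. First, the existence of a $W$-equivariant algebra $\mathcal A$ on $\liet^*$ with $Toda_1(G)$ Morita equivalent to $\mathcal A\#\bC W$ is asserted, not established. ``Base-changing $Toda_1(G)$ along $\liet^*\to\liet^*//W$'' means the two-sided tensor of the Hopf algebroid over its non-central base; a priori this produces only a groupoid-type object over $\liet^*$, not a $W$-equivariant algebra whose invariants recover $Toda_1(G)$, and the quantum ``restriction of $\diff(G)//_\psi N$ to the slice $\psi+\liet^*$'' is not a defined operation. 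In the paper this step is exactly the passage from $Gr$ to $Fl$ and requires the finite faithfully flat descent Lemma \ref{lemma1} adapted to ind-schemes. Moreover, even granting $\mathcal A$, the claim that $\mathcal A$-mod embeds fully faithfully into $\diff(T)$-mod (equivalently, that restriction along \ref{comparison} is fully faithful) does not follow from the map being a generic isomorphism; establishing this fullness (reflection of invariants, generation of $\cT$ by graphs and a neighborhood of the diagonal) is the hardest part of the paper and your proposal gives no substitute for it.

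Second, and more decisively, the reduction to rank one ``one wall at a time'' cannot by itself produce the target category. By Proposition \ref{bigprop} and Theorem \ref{thm2?}, the decoration $``\cI"$ means trivial \emph{derived} isotropy for $W^{aff}$, equivalently descent of the $\Gamma$-equivariant structure to $\liet^*//\Gamma$ for \emph{every} finite parabolic $\Gamma\subset W^{aff}$ --- including the stabilizers $\Gamma^x$ of deep points, which have rank up to that of $G$. A wall-by-wall analysis only imposes the rank-one (codimension-one) conditions, and nothing in your argument shows that rank-one descent forces descent for the higher-rank parabolics; because the isotropy subgroup is not flat over $\liet^*$ and the condition is phrased via flat resolutions, this implication is precisely what is in doubt, and the paper's own local-to-global argument is organized around point stabilizers (the groupoids $\cG^x$, $\cT^x$ and the covers $U^{x,y}_i$ of the pieces $S_i$ of $\cT$), not around walls. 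Finally, declaring the wall conditions to ``be by definition $``\cI"$'' is circular relative to the statement being proved: you would still have to identify whatever conditions your rank-one computation yields with the category $QCoh^{\widetilde{W}^{aff}}(\liet^*)^{``\cI"}$ as defined through Proposition \ref{bigprop}.
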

Here the category $QCoh^{\widetilde{W}^{aff}}(\liet^*)^{``\cI"}$ denotes the full subcategory of $QCoh^{\widetilde{W}^{aff}}(\liet^*)$ whose objects are all those with trivial \emph{derived isotropy} for $W^{aff}$. The precise meaning of this will be spelled out in the main body of the paper (see Proposition \ref{bigprop}). An alternative formulation is as follows:
\begin{thm}\label{thm2?}There exists an equivalence of categories
\begin{align}
Toda_1(G)-mod\cong QCoh^{\widetilde{W}^{aff}}(\liet^*)^{fpd}
\end{align}
which is compatible with \ref{babyfourier} in the natural way.\end{thm}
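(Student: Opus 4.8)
The plan is to derive Theorem~\ref{thm2?} from Theorem~\ref{thm1} by showing that, inside $QCoh^{\widetilde{W}^{aff}}(\liet^*)$, the two full subcategories
\[
QCoh^{\widetilde{W}^{aff}}(\liet^*)^{``\cI"}
\qquad\text{and}\qquad
QCoh^{\widetilde{W}^{aff}}(\liet^*)^{fpd}
\]
literally coincide. Compatibility with \ref{babyfourier} is then automatic, since both are defined by imposing a condition on objects while retaining all morphisms. Membership in either subcategory can be tested Zariski-locally on $\liet^*$, and by $\widetilde{W}^{aff}$-equivariance it is enough to test on the formal neighborhood of each point $\lambda \in \liet^*$; there only the finite isotropy group $W^{aff}_\lambda \subset W^{aff}$ --- the reflection group generated by the walls of the affine arrangement through $\lambda$ --- is felt, acting linearly and essentially on the normal space $V$ to its fixed locus. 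So everything reduces to a single local statement about a finite real reflection group $W'$ (playing the role of $W^{aff}_\lambda$) on $V$: writing the local model as a filtered module $M$ over the completion of $\diff(T)^W$ --- equivalently over $e(\diff(V)\#\bC W')e$ with $e = \tfrac{1}{|W'|}\sum_{w} w$ --- one must show that $M$ has vanishing derived $W'$-isotropy in the sense of Proposition~\ref{bigprop} if and only if $M$ has finite projective dimension.

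The homological content is this. The algebra $\diff(V)^{W'}$ itself has finite global dimension (via the idempotent $e$ and $\operatorname{gl.dim}(\diff(V)\#\bC W') = \operatorname{gl.dim}(\diff(V))$), so finiteness of projective dimension becomes a genuine condition only once it is measured against the appropriate singular classical limit: the order filtration has associated graded the invariant ring $S^{W'}$ of a polynomial ring $S$ carrying a cotangent-type (symplectic) representation of $W'$, and $S^{W'}$ is \emph{not} regular --- each reflection of $W'$ acts by $-1$ on a two-dimensional summand, producing a transverse quadric-cone ($A_1$) singularity, with the deeper strata more degenerate still. Over such hypersurface-type loci minimal resolutions acquire, by Eisenbud-style two-periodicity, eventual matrix-factorization tails, and ``finite projective dimension'' is precisely the vanishing of those tails. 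On the other side, the derived $W'$-isotropy of $M$ is built up stratum by stratum from Koszul-type complexes attached to the generating reflections (the operators $\operatorname{id}\pm s$, i.e. the classes of the $(-1)$-eigenlines), and its cohomology is exactly the obstruction to that vanishing. The plan is to turn this into an honest equivalence by exhibiting both conditions as the acyclicity of one and the same ``$W'$-Koszul complex'' of $M$ --- passing between $\diff(V)^{W'}$-modules and $W'$-equivariant $\diff(V)$-modules by the Morita equivalence $\diff(V)^{W'}\simtimes\diff(V)\#\bC W'$, and using the good homological behaviour of the extension $\diff(V)^{W'}\subset\diff(V)$ --- and then concluding by a two-out-of-three argument.

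The hardest part will be the case of a reducible or large isotropy group $W^{aff}_\lambda$, i.e. a vertex of the alcove where several mutually non-orthogonal walls meet (types such as $A_1\times A_1$, $A_2$, $B_2$, $G_2$ and their higher-rank analogues): there the classical limit sits on a cone more degenerate than a single hypersurface, several matrix-factorization tails interact, and the condition ``all derived isotropy vanishes'' is organised by the entire poset of parabolic subgroups of $W^{aff}_\lambda$ together with their fixed loci, which must nonetheless be shown equivalent to the one scalar statement ``finite projective dimension''. I anticipate proving this by d\'evissage along the stratification of $V$ by $W'$-orbit type, reducing inductively to the rank-one $A_1$ calculation; the real labour is to run the d\'evissage $\widetilde{W}^{aff}$-equivariantly and to glue the resulting local equivalences over all of $\liet^*$. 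Once this local statement is established, Theorem~\ref{thm2?} follows immediately from Theorem~\ref{thm1}.
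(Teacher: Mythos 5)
Your overall strategy --- deduce Theorem \ref{thm2?} from Theorem \ref{thm1} by showing that the two full subcategories $QCoh^{\widetilde{W}^{aff}}(\liet^*)^{``\cI"}$ and $QCoh^{\widetilde{W}^{aff}}(\liet^*)^{fpd}$ coincide, with compatibility with \ref{babyfourier} then being automatic --- is exactly the paper's strategy. But the proposal founders on a misreading of the target: in this paper $fpd$ abbreviates \emph{finite parabolic descent}, not finite projective dimension. The defining condition is that for every finite parabolic subgroup $\Gamma\subset W^{aff}$ the $\Gamma$-equivariant structure on the sheaf is a descent datum for $\liet^*\to\liet^*//\Gamma$, i.e.\ the comodule structure for $\Gamma\#\liet^*$ extends (necessarily uniquely) to one for $\liet^*\times_{\liet^*//\Gamma}\liet^*$. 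Consequently the ``single local statement'' you set out to prove --- that vanishing derived $W'$-isotropy is equivalent to finite projective dimension of a filtered module over $e(\diff(V)\#\bC W')e$ --- is not the statement needed, and the homological apparatus you invoke (singularity of the symplectic invariant ring, Eisenbud two-periodicity, matrix-factorization tails, d\'evissage by orbit type) has no counterpart in what must actually be shown. The genuine homological input is much tamer: Chevalley--Shephard--Todd gives that $\cO(\liet^*)$ is finite free over $\cO(\liet^*//\Gamma)$, whence faithfully flat descent (Lemma \ref{lemma1}) identifies $QCoh(\liet^*//\Gamma)$ with representations of the adjacency groupoid of $\Gamma\#\liet^*$, and the uniqueness of a compatible adjacency-comodule structure comes from verifying the hypotheses of Proposition \ref{bigprop} for each $\cG^x=\Gamma^x\#\liet^*$.

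There is a second gap independent of the misreading: the claim that membership in either subcategory ``can be tested Zariski-locally on $\liet^*$'' and hence reduces to formal neighborhoods of points. The derived-isotropy condition refers to the global adjacency groupoid $\cT$, an ind-scheme supported on the union of all graphs $\Gamma_\gamma$, $\gamma\in W^{aff}$, inside $\liet^*\times\liet^*$; and descent along $\liet^*\to\liet^*//\Gamma$ is not a Zariski-local condition on $\liet^*$ (the fibers are whole $\Gamma$-orbits). The hard direction of the paper's argument is precisely the passage from the pointwise data --- one uniquely determined $\cO(\cT^x)$-comodule structure for each stabilizer $\Gamma^x$ --- to a single globally defined morphism $V\to V\otimes\cO(\cT)$: this is done by covering each finite union of graphs $S_i$ by the open sets $U^{x,y}_i$, defining the map on each piece via the $\cT^x$-torsors $\cT^{y\to x}$, and checking agreement on overlaps using triple-intersection stabilizers $\Gamma^{x_2}\cap\Gamma^{y_2}\cap\Gamma^{z_2}$. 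Your proposal contains no mechanism for this gluing, and without it the reduction to a local model does not yield the theorem even if the local statement were the right one.
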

Here the category $QCoh^{\widetilde{W}^{aff}}(\liet^*)^{fpd}$ denotes the full subcategory of $QCoh^{\widetilde{W}^{aff}}(\liet^*)$ whose objects are all those whose $\Gamma$-equivariant structure descends to $\liet^*//\Gamma$, for every finite parabolic subgroup $\Gamma\subset W^{aff}\subset\widetilde{W}^{aff}$.
\section{The Affine Grassmannian}

For background material on the affine Grassmannian, see \cite{Ginzburg}\cite{Lu}. For algebraic groups, see \cite{Groups}. The material of paragraphs \ref{2.1}-\ref{2.5}, \ref{2.7}, \ref{2.9} is borrowed from these.

\subsection{}\label{2.1}Let $G^\vee$ be the complex algebraic group which is Langlands dual to $G$ and has maximal torus $T^\vee$. Let $Gr$ denote the affine Grassmannian for the $G^\vee$. This is a certain projective ind-scheme whose $\bC$-valued points are $G^\vee(\cK)/G^\vee(\cO)$, where $\cK=\bC((t))$, $\cO=\bC[[t]]$. The translation action of $G^\vee(\cO)$ has finite dimensional orbits, whose closures (the so-called `spherical Schubert varieties') give the ind-scheme structure. The cocharacter lattice $\bX_\bullet(T^\vee)$ embeds in $Gr$ as its $T^\vee$-fixed point subset, and each $G^\vee(\cO)$-orbits contains a unique $W$-orbit in $\bX_\bullet(T^\vee)$. The group $\bG_m$ also acts, by the `loop rotation' local automorphisms of $\cK$, and this action fixes the $T^\vee$-fixed points and preserves the spherical Schubert varieties.

\subsection{}\label{cohom}We will be interested the (complex) cohomology and homology of $Gr$ \footnote{Strictly speaking, we are working here with the ind-analytic space $Gr^{an}$.}. By definition, the cohomology $H^\bullet(Gr)$ of $Gr$ is the cofiltered system of the cohomologies of the spherical Schubert varieties, and the homology $H_\bullet(Gr)$ of $Gr$ is the filtered system of the homologies of the spherical Schubert varieties. The transition morphisms in $H^\bullet(Gr)$ are all surjective, and those in $H_\bullet(Gr)$ are all injective. Forgetting the ind-scheme structure on $Gr$, one obtains the topological space $Gr^{top}$ \footnote{Again, strictly speaking this is the filtered colimit in the category of topological spaces of the filtered system of analytic spaces $Gr^{an}$.}. We then have $H_\bullet(Gr^{top})=\colim H_\bullet(Gr)$. However, $\nlim H^\bullet(Gr)$ is some completion of $H^\bullet(Gr^{top})$ (and so bigger than it). Since each spherical Schubert variety is compact, its homology is equal to its Borel-Moore homology, i.e. its cohomology with coefficients in its dualizing complex. Gluing together these dualizing complexes by the $!$-pullbacks, one may think of $H_\bullet(Gr)$ rather literally as its cohomology with coefficients in \emph{its} dualizing complex.

\subsection{}\label{2.3}We fix a Borel subgroup $B^\vee$ of $G^\vee$ containing $T^\vee$ and write $I^\vee=G^\vee(\cO)\times_{G^\vee}B^\vee$ for the corresponding Iwahori subgroup of $G^\vee(\cO)$. The orbits of $I^\vee$ on $Gr$ form a complex cell decomposition, compatible with the stratification by spherical Schubert varieties. It follows that $Gr$ is equivariantly formal (for, say the $T^\vee$-action). We then define $H^\bullet_{T^\vee}(Gr)$ as the cofiltered system of the $T^\vee$-equivariant cohomologies of the spherical Schubert varieties, and $H_\bullet^{T^\vee}(Gr)$ as the filtered system of the $T^\vee$-equivariant cohomologies of the spherical Schubert varieties \emph{with coefficients in} their respective (canonically equivariant) dualizing complexes\footnote{As before, we like to think of this as the equivariant cohomology of $Gr$ with coefficients in its dualizing complex.}. By equivariant formality (and finite-dimensionality of the spherical Schubert varieties), $H_\bullet^{T^\vee}(Gr)$ and $H^\bullet_{T^\vee}(Gr)$ are dual over $H^\bullet_{T^\vee}(pt)=\cO(\liet^*)$. Similarly we define the $G^\vee$-equivariant homology and cohomology; it is a standard consequence of equivariant formality that
$$
H^\bullet_{T^\vee}(Gr)=\pi^*H^\bullet_{G^\vee}(Gr)
$$
and
$$
H_\bullet^{T^\vee}(Gr)=\pi^*H_\bullet^{G^\vee}(Gr)
$$
where $\pi: \liet^*\to\liet^*//W=\spec(H^\bullet_{G^\vee}(pt))$ is the natural projection. Inversely, $W$ acts naturally on $H^\bullet_{T^\vee}(Gr)$ (resp. $H_\bullet^{T^\vee}(Gr)$), and one obtains $\pi^*H^\bullet_{G^\vee}(Gr)$ (resp. $\pi^*H_\bullet^{G^\vee}(Gr)$) from them by taking invariants. All of the above goes through when one introduces the additional `loop rotation' factor of equivariance (and the corresponding additional $\bA^1$ factor in the base). As in the non-equivariant case, the transition maps in cohomology are all surjective, while those in homology are all injective, and the analogous remarks comparing their (co-) limits to the equivariant (co-) cohomology of $Gr^{top}$ hold.

\subsection{}\label{2.4}Since $G^\vee$ is the maximal reductive quotient of $G^\vee(\cO)$, we may replace $G^\vee$ by $G^\vee(\cO)$ as a factor of equivariance. That is to say, we have natural isomorphisms
$$
H^\bullet_{G^\vee}(Gr)\cong H^\bullet_{G^\vee(\cO)}(Gr)
$$
and
$$
H^\bullet_{G^\vee\times\bG_m}(Gr)\cong H^\bullet_{G^\vee(\cO)\rtimes\bG_m}(Gr)
$$
where the right-hand sides are defined just as before (recall that the spherical Schubert varieties are $G^\vee(\cO)$-equivariant by definition). We have the analogous isomorphisms in homology. In the same fashion, one may replace $T^\vee$ by $I^\vee$.

\subsection{}\label{2.5}Putting the ind-scheme structure of $Gr$ together with the pro-scheme structure of $G^\vee(\cO)$, we may regard $G^\vee(\cK)$ as a pro-ind-scheme. With some rearranging, it may be viewed as an ind-pro-scheme, admitting the $G^\vee(\cO)\times G^\vee(\cO)$-orbits as a final family of sub-pro-schemes. We have
$$
H^\bullet_{G^\vee(\cO)}(Gr)=H^\bullet_{G^\vee(\cO)\times G^\vee(\cO)}(G(\cK))
$$
$$
H^\bullet_{G^\vee(\cO)\rtimes\bG_m}(Gr)=H^\bullet_{(G^\vee(\cO)\times G^\vee(\cO))\rtimes\bG_m}(G(\cK))
$$
and so on. This symmetric presentation makes it clear that in both cases we have an extra action of $\cO(\liet^*//W)$. It happens that the two $\cO(\liet^*//W)$-module structures differ in the presence of loop-rotation equivariance, and coincide without it.

\subsection{A remark on equivariant homology.}\label{2.6}The description of $H_\bullet^{G^\vee(\cO)\rtimes\bG_m}(Gr)$ as the $\cO(\liet^*//W\times\bA^1)$-linear dual to $H^\bullet_{G^\vee(\cO)\rtimes\bG_m}(Gr)=H^\bullet_{(G^\vee(\cO)\times G^\vee(\cO))\rtimes\bG_m}(G(\cK))$ (using the `left-hand' action of $\cO(\liet^*//W)$) appears asymmetrical. However, essentially because $G^\vee(\cO)$ is (pro-) smooth, the $*$-pullback to $G^\vee(\cK)$ of the dualizing complex on $Gr$ descends to the dualizing complex on $G^\vee(\cO)\backslash G^\vee(\cK)$. One can then give a symmetric description of $H_\bullet^{G^\vee(\cO)\rtimes\bG_m}(Gr)$ as the $(G^\vee(\cO)\times G^\vee(\cO))\rtimes\bG_m$-equivariant cohomology of $G^\vee(\cK)$ with coefficients in this pullback\footnote{which is to be thought of as \emph{its} dualizing complex, homologically shifted by $-2\dim G^\vee(\cO)$.}. In particular this gives a canonical isomorphism between the two  $\cO(\liet^*//W\times\bA^1)$-linear duals to $H^\bullet_{G^\vee(\cO)\rtimes\bG_m}(Gr)$ \footnote{If one restricts to $0\in\bA^1$, these two duals are identical and the resulting automorphism is the identity.}.

This line of reasoning has an interesting application for finite flag varieties. Namely, let $P$ and $Q$ be two parabolic subgroups of $G$. Since $P,Q$ are smooth and $P\backslash G,G/Q$ are compact, the $P\times Q$-equivariant cohomology of the dualizing complex on $G$ is at once the $H^\bullet_P(pt)=\cO(\liet//W_P)$-linear dual of $H^\bullet_P(G/Q)$, shifted by $2\dim Q$, and the $H^\bullet_Q(pt)=\cO(\liet//W_Q)$-linear dual of $H^\bullet_Q(P\backslash G)$, shifted by $2\dim P$. The consequence in algebraic geometry is that, whenever $P\subset Q$, we have the canonical isomorphism of functors
$$
(\pi^P_Q)^!(2\dim P)\cong (\pi^P_Q)^*(2\dim Q):QCoh^{\bG_m}(\liet//W_Q)\to QCoh^{\bG_m}(\liet//W_P)
$$
exactly mirroring the topological statement that
$$
(\pi^P_Q)^![2\dim P]\cong (\pi^P_Q)^*[2\dim Q]:D^b(BQ)\to D^b(BP).
$$
Here $\pi^P_Q$ stands simultaneously (and very abusively) for the natural maps $\liet//W_P\to\liet//W_Q$ and $BP\to BQ$.

\subsection{Convolution.}\label{2.7}The multiplication and inversion in the group $G^\vee(\cK)$ are bounded with respect to the ind-pro-structure. It follows that $H^\bullet_{G^\vee(\cO)\rtimes\bG_m}(Gr)$ and $H_\bullet^{G^\vee(\cO)\rtimes\bG_m}(Gr)$ are Hopf algebroids over $\liet^*//W\times\bA^1$. They are supported on the diagonal $\liet^*//W\times\liet^*//W\times\bA^1\subset (\liet^*//W\times\bA^1)^2$, and thus we may think of them as an $\bA^1$-family of Hopf algebroids over $\liet^*//W$. The same is true for their (co-) limits discussed in \ref{cohom}. Of course some care has to be taken to say what it means for a pro- or ind-object to be a Hopf algebroid. For our purposes, it happens that the pro-object $H^\bullet_{G^\vee(\cO)\rtimes\bG_m}(Gr)$ is a pro-algebra (but not a pro-coalgebra), while the ind-object $H_\bullet^{G^\vee(\cO)\rtimes\bG_m}(Gr)$ is an ind-coalgebra (but not an ind-algebra). We do not know whether these restrictions are critical to have a good general theory, but they are very natural in the topological setting.

The pro-object Hopf algebroid $H^\bullet_{G^\vee(\cO)\rtimes\bG_m}(Gr)$ is commutative. Equivalently, its spectrum is a groupoid ind-scheme. In a sense which will be made precise, the ($\bA^1$-families of) representations of this groupoid over $\liet^*//W$ are the same as comodules for $H^\bullet_{G^\vee(\cO)\rtimes\bG_m}(Gr)$, which in turn (by pro-finite flatness) are the same as modules for the dual algebra $H_\bullet^{G^\vee(\cO)\rtimes\bG_m}(Gr)$.

\subsection{}\label{2.8} Following \cite{BF} (and, originally, \cite{Ginzburg}), the following is a consequence of the geometric Satake equivalence (whose details we need not recall here):
\begin{thm}\label{bez} There are natural isomorphisms
$$
H_\bullet^{G^\vee\rtimes\bG_m}(Gr^{top})\cong Toda_{\hbar}(G)
$$
and
$$
H^\bullet_{G^\vee\rtimes\bG_m}(Gr^{top})\cong \cO(\pi_1(G^\vee)\times N_{\liet^*//W}(\liet^*//W)^2)
$$
of Hopf algebroids over $\liet^*//W\times\bA^1$.
\end{thm}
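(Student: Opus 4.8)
The plan is to deduce both isomorphisms from the geometric Satake equivalence by running Tannakian reconstruction in families over the base $\liet^*//W\times\bA^1$. Geometric Satake identifies the convolution category $(\mathrm{Perv}_{G^\vee(\cO)}(Gr),\ast)$ with $(\mathrm{Rep}(G),\otimes)$, the hypercohomology fibre functor corresponding to the forgetful functor. The first step is to promote this to the $G^\vee(\cO)\rtimes\bG_m$-equivariant setting: by the equivariant formality recalled in \ref{2.3} and \ref{cohom}, the functor $V\mapsto H^\bullet_{G^\vee(\cO)\rtimes\bG_m}(Gr,\mathrm{Sat}(V))$ is exact and faithful, takes values in flat modules over $H^\bullet_{G^\vee\rtimes\bG_m}(pt)=\cO(\liet^*//W\times\bA^1)$ (this uses only that $G^\vee$ and $G$ share the Weyl group $W$), and is monoidal because $\ast$ goes to $\otimes$ compatibly with the equivariant structures. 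It is therefore a fibre functor over $\liet^*//W\times\bA^1$. By \ref{2.4} we may use $G^\vee(\cO)\rtimes\bG_m$ and $I^\vee$ as equivariance groups, so that everything becomes literally a statement about $Gr$ and its Schubert stratification.

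Applying Tannakian reconstruction relative to $\spec\cO(\liet^*//W\times\bA^1)$, the category of ind-$\mathrm{Rep}(G)$-objects together with this fibre functor is the category of representations of a flat affine groupoid scheme $\cG$ over $\liet^*//W$ — an $\bA^1$-family whose two structure morphisms are the two $\cO(\liet^*//W)$-actions of \ref{2.5} and therefore coincide, degenerating $\cG$ to a group scheme, when $\hbar=0$. By construction the ring of functions $\cO(\cG)$ is $H^\bullet_{G^\vee(\cO)\rtimes\bG_m}(Gr)$ equipped with its cup product and with the coalgebra structure dual to convolution, while the convolution (= distribution) algebra of $\cG$ is recovered as the $\cO(\liet^*//W\times\bA^1)$-linear dual $H_\bullet^{G^\vee(\cO)\rtimes\bG_m}(Gr)$; the comultiplication, counit and antipode are respectively convolution (using the boundedness of multiplication in $G^\vee(\cK)$, \ref{2.7}), restriction to the base point of $Gr$, and inversion in $G^\vee(\cK)$. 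Passing to $Gr^{top}$ replaces these by a colimit on the homology side and a completion on the cohomology side, as flagged in \ref{cohom} and \ref{2.7}; one must check that this lands on the honest, uncompleted Hopf algebroids of the statement rather than over- or under-completing, a point I expect to demand genuine care with the transition maps.

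It then remains to identify $\cG$ with $\pi_1(G^\vee)\times N_{\liet^*//W}(\liet^*//W)^2$, which I would carry out in three stages. (i) Components: $\pi_0(Gr)\cong\pi_1(G^\vee)$, equivalently $\mathrm{Rep}(G)$ is graded by $\bX^\bullet(Z(G))\cong\pi_1(G^\vee)$, and this splits off the $\pi_1(G^\vee)$-factor. (ii) The fibre at $\hbar=0$: here one is reduced to ordinary $G^\vee$-equivariant homology, and the theorem of Ginzburg (\cite{Ginzburg}) — Tannakian once more, the maximal torus of $\cG|_{\hbar=0}$ over the regular locus being read off from the $T^\vee$-fixed points as the maximal torus $T$ of $G$, in accordance with the generic map to $(T^*T)//W$ of \ref{toda} — identifies $H_\bullet^{G^\vee}(Gr)$ with $Toda_0(G)$, whose spectrum is the classical phase space $Z^G_{\psi+\lien^\perp}/N$; dually this is the $\hbar=0$ specialisation of $\cO(\cG)$. (iii) Turning on $\hbar$: the loop-rotation filtration exhibits $H_\bullet^{G^\vee(\cO)\rtimes\bG_m}(Gr)$ as a flat one-parameter quantization of this $\hbar=0$ fibre over $\bA^1$, and one must match it with Kostant's quantization — the two-sided Whittaker reduction of $\diff(G)$ defining $Toda_1(G)$ in \ref{toda}. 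I expect (iii) to be the crux: recognising the correct quantization requires computing the non-commutative convolution, not merely its classical limit. One route is to note that the quantization is forced to be $\diff(T)^W$ generically over $(\liet^*)^{reg}//W$ by the torus computation together with the generic isomorphism \ref{comparison}, so that the two quantizations agree there; the discrepancy over the discriminant (codimension $\geq 1$) is then pinned down by computing the convolution action of the minuscule and quasi-minuscule Schubert cells, which supply the generators of $Toda_\hbar(G)$, and of the first Chern class of $\cO(1)$, which supplies the $\hbar$-direction, and by matching the resulting relations against the presentation of the Whittaker reduction of $\diff(G)$. Alternatively one simply invokes the equivariant (derived) Satake theorem of \cite{BF}, in which exactly this comparison is carried out through the formality of the equivariant cochain algebra. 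Granting stage (iii), the remaining Hopf-algebroid compatibilities over $\liet^*//W\times\bA^1$ are a formal consequence of geometric Satake and equivariant formality and amount to routine verification.
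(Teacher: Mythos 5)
The first thing to say is that the paper does not prove Theorem \ref{bez} at all: it is stated as ``a consequence of the geometric Satake equivalence (whose details we need not recall here)'' and imported wholesale from \cite{BF}, with the classical ($\hbar=0$) statement going back to \cite{Ginzburg}. Your closing fallback --- ``one simply invokes the equivariant (derived) Satake theorem of \cite{BF}'' --- is therefore exactly what the paper does, and your longer sketch is an outline of the argument of \cite{BF} rather than of anything appearing in this paper. As such an outline it has the right shape: the fibre functor valued in modules over $\cO(\liet^*//W\times\bA^1)$, the degeneration of the groupoid to a group scheme at $\hbar=0$ where the two $\cO(\liet^*//W)$-module structures of \ref{2.5} coincide, the splitting off of $\pi_1(G^\vee)$ via $\pi_0(Gr)$, the identification of the classical fibre with $\spec(Toda_0(G))\cong\liez^G_{\psi+\lien^\perp}/N$, and --- correctly identified as the crux --- the matching of the loop-rotation deformation with the Kostant--Whittaker quantization. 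Two cautions. First, once loop rotation is switched on the convolution monoidal structure is not symmetric and the fibre functor is a functor to bimodules, so ``Tannakian reconstruction'' must be taken in the Hopf-algebroid sense from the outset; this is not a decoration but the reason the theorem is phrased as an isomorphism of Hopf algebroids rather than of Hopf algebras, and your sketch treats it a little casually. Second, your stage (iii) (minuscule and quasi-minuscule generators, a codimension-one argument over the discriminant) is a plan rather than a proof, and since the paper supplies no internal argument there is nothing here to check it against; to make the step rigorous you should either carry out that computation in full or do as the paper does and cite \cite{BF}, where precisely this comparison is performed.
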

Here $Toda_{\hbar}(G)$ is the Rees construction of $Toda_1(G)$ with respect to the filtration of \ref{toda}, and $\hbar$ is the parameter of $\bA^1$. Also $N_XY$ denotes the `deformation to the normal cone' of $X$ in $Y$ whenever $X\subset Y$ is a closed subscheme. It is a flat $\bG_m$-equivariant $\bA^1$-scheme whose restriction to $\bA^1-\{0\}$ is $Y\times(\bA^1-\{0\})$ and whose $0$-fiber is the normal cone of $X$ in $Y$. The groupoid scheme structure comes from the trivial groupoid structure on $(\liet^*//W)^2$ (with $\liet^*//W$ its subgroup).\

Recall that $H_\bullet^{G^\vee\rtimes\bG_m}(Gr^{top})=\colim H_\bullet^{G^\vee\rtimes\bG_m}(Gr)$; thus the data provided by $H_\bullet^{G^\vee\rtimes\bG_m}(Gr)$ is nothing more than a filtration of $Toda_\hbar(G)$, the so called spherical Schubert filtration. A module for $Toda_\hbar(G)$ is therefore precisely a trivially filtered module for $H_\bullet^{G^\vee\rtimes\bG_m}(Gr)$. These are equivalent to trivially cofiltered comodules for $H^\bullet_{G^\vee\rtimes\bG_m}(Gr)$, and one obtains:
\begin{cor}There is an equivalence of categories
$$
Toda_\hbar(G)-mod\cong Rep_{\liet^*//W\times\bA^1}\spec(H^\bullet_{G^\vee\rtimes\bG_m}(Gr))
$$
whence
$$
Toda_1(G)-mod\cong Rep_{\liet^*//W}\spec(H^\bullet_{G^\vee\rtimes\bG_m}(Gr))_{\hbar=1}.
$$
\end{cor}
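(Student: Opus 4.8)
The plan is to convert Theorem~\ref{bez} into a statement about abelian categories of modules, passing through the pro/ind-finite-flat duality between $H^\bullet_{G^\vee\rtimes\bG_m}(Gr)$ and $H_\bullet^{G^\vee\rtimes\bG_m}(Gr)$ recorded in~\ref{2.3} and the Tannakian description of comodules over a commutative Hopf algebroid as representations of the associated groupoid ind-scheme. First I would make precise the passage from the Hopf algebroid $Toda_\hbar(G)$ to its module category: by Theorem~\ref{bez} this Hopf algebroid is $H_\bullet^{G^\vee\rtimes\bG_m}(Gr^{top})=\colim H_\bullet^{G^\vee\rtimes\bG_m}(Gr)$, i.e.\ the ind-coalgebra $H_\bullet^{G^\vee\rtimes\bG_m}(Gr)$ together with its tautological filtration by the equivariant homologies of the spherical Schubert varieties. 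As already noted after Theorem~\ref{bez}, a plain module for $Toda_\hbar(G)$ is the same datum as a \emph{trivially filtered} module for the ind-object $H_\bullet^{G^\vee\rtimes\bG_m}(Gr)$; this is formal, since the trivial filtration is a colimit of constant ones and the transition maps in homology are injective (\ref{cohom}).

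Second I would invoke the perfect $\cO(\liet^*//W\times\bA^1)$-pairing, term by term along the spherical Schubert varieties, between $H_\bullet^{G^\vee\rtimes\bG_m}(Gr)$ and $H^\bullet_{G^\vee\rtimes\bG_m}(Gr)$ coming from equivariant formality and finite-dimensionality (\ref{2.3}). Since each graded piece is finite flat over the base, $\cO(\liet^*//W\times\bA^1)$-linear duality is an exact contravariant equivalence between the two towers, and under it a trivially filtered module for the ind-coalgebra $H_\bullet^{G^\vee\rtimes\bG_m}(Gr)$ corresponds to a trivially cofiltered comodule for the pro-algebra $H^\bullet_{G^\vee\rtimes\bG_m}(Gr)$. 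The asymmetry flagged in~\ref{2.7}---that the homology side is an ind-coalgebra but not an ind-algebra, while the cohomology side is a pro-algebra but not a pro-coalgebra---is exactly what makes this matching consistent: a comodule is tested against the coalgebra structure and a module against the algebra structure, and linear duality interchanges these compatibly with the tower structure.

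Third, a comodule over the pro-commutative Hopf algebroid $H^\bullet_{G^\vee\rtimes\bG_m}(Gr)$ over $\liet^*//W\times\bA^1$ is, by definition, a quasi-coherent sheaf on $\liet^*//W\times\bA^1$ equipped with a coaction of the groupoid ind-scheme $\spec(H^\bullet_{G^\vee\rtimes\bG_m}(Gr))$ obtained by dualizing~\ref{2.7}; this is precisely $Rep_{\liet^*//W\times\bA^1}\spec(H^\bullet_{G^\vee\rtimes\bG_m}(Gr))$, which gives the first equivalence. The second equivalence then follows by flat base change along the inclusion $\{1\}\hookrightarrow\bA^1$ of the parameter $\hbar$, using that $Toda_1(G)$ is by construction the $\hbar=1$ fibre of the Rees Hopf algebroid $Toda_\hbar(G)$ and that forming $\spec$ and the representation category commutes with this base change.

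I expect the only real obstacle to be foundational bookkeeping rather than a substantial new idea: one must fix once and for all what a representation of a groupoid ind-scheme over a base is, and the attendant comodule formalism for pro- and ind-(co)algebras, so that step two is literally an equivalence of categories and the trivial (co)filtration conditions are unambiguous. In particular, one should check that the spurious-looking extra $\cO(\liet^*//W)$-action of~\ref{2.5}, together with the subtlety about its two incarnations in the presence of loop rotation (\ref{2.6}), does not interfere with identifying the two sides as categories over the \emph{same} base $\liet^*//W\times\bA^1$ (resp.\ $\liet^*//W$ after specialization).
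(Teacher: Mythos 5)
Your proposal follows the paper's own route essentially verbatim: identify $Toda_\hbar(G)$-modules with trivially filtered modules over the ind-object $H_\bullet^{G^\vee\rtimes\bG_m}(Gr)$ via the spherical Schubert filtration, dualize term-by-term using pro/ind-finite flatness to get trivially cofiltered comodules for $H^\bullet_{G^\vee\rtimes\bG_m}(Gr)$, and read these as representations of the groupoid ind-scheme, then specialize at $\hbar=1$. The extra bookkeeping you flag (the two base-module structures, the ind-coalgebra/pro-algebra asymmetry) is consistent with the caveats the paper itself raises in \ref{2.5}--\ref{2.7}, so no further comment is needed.
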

Here $\spec(H^\bullet_{G^\vee\rtimes\bG_m}(Gr))$ is a certain sub-groupoid-ind-scheme of $N_{\liet^*//W}(\liet^*//W)^2$, which the remainder of this paper is aimed at understanding.

\subsection{The affine flag variety.}\label{2.9}The affine flag variety, denoted $Fl$, is a certain $G^\vee/B^\vee$-bundle over $Gr$ whose $\bC$-points are $G^\vee({\cK})/I^\vee$. We define the $\bA^1$-families of Hopf algebroids over $\liet^*$:
$$
H^\bullet_{I^\vee\rtimes\bG_m}(Fl)
$$
$$
H_\bullet^{I^\vee\rtimes\bG_m}(Fl)
$$
in essentially the same way as for $Gr$ \footnote{And as for $Gr$ they may also be regarded as the $(I^\vee\times I^\vee)\rtimes\bG_m$-equivariant cohomology of some `complexes' (the constant sheaf and the shifted dualizing complex) on $G^\vee(\cK)$.}. They are respectively pro-finite flat and ind-finite flat objects with respect to both $\cO(\liet^*\times\bA^1)$-module structures, over which they are also dual. Again $H^\bullet_{I^\vee\rtimes\bG_m}(Fl)$ is commutative, and so its spectrum is an $\bA^1$-family of groupoid ind-schemes over $\liet^*$. It is related to $\spec(H^\bullet_{G^\vee\rtimes\bG_m}(Gr))$ in the following way:
$$
\spec(H^\bullet_{I^\vee\rtimes\bG_m}(Fl))\cong \liet^*\times_{\liet^*//W}\spec(H^\bullet_{G^\vee\rtimes\bG_m}(Gr))\times_{\liet^*//W}\liet^*,
$$
i.e. it is obtained from $\spec(H^\bullet_{G^\vee\rtimes\bG_m}(Gr))$ by applying the natural pullback functor for ind-schemes along $\pi:\liet^*\to\liet^*//W$.

We will see that the representations of these two groupoids are equivalent. Let $\cT$ denote the pullback groupoid evaluated at $\hbar=1$. It is the study of this groupoid which will eventually yield Theorem \ref{thm1}.

\subsection{Localization.}\label{2.10}The localization theorem of \cite{GKM} allows us to describe $\cT$ precisely. Indeed they show that the restriction map
$$
H^\bullet_{I^\vee\rtimes\bG_m}(Fl)\cong H^\bullet_{T^\vee\rtimes\bG_m}(Fl)\to H^\bullet_{T^\vee\rtimes\bG_m}(Fl^{T^\vee\times\bG_m})
$$
is generically\footnote{This means that for each closure of $G^\vee(\cO)$-orbit in $Fl$, whose ($T^\vee\times\bG_m$-) equivariant cohomologies form the pieces of the pro-object $H^\bullet_{T^\vee\rtimes\bG_m}(Fl)$, the restriction map from its equivariant cohomology to that of its ($T^\vee\times\bG_m$-) fixed point set is an isomorphism over some non-empty Zariski-open subset of the base $\liet^*\times\bA^1$. These open subsets do not stabilize as one exhausts $Fl$.} an isomorphism (and so in particular injective, since the left hand side is torsion-free over $\liet^*\times\bA^1$). The same holds when we set $\hbar=1$, by gradedness. The $T^\vee\times\bG_m$-fixed point set of $Fl$ is identified with $\widetilde{W}^{aff}$, and the spectrum of its equivariant cohomology is the ind-scheme
$$
\coprod_{\gamma\in \widetilde{W}^{aff}}\Gamma^\hbar_\gamma.
$$
Here $\Gamma^\hbar_\gamma=\{(x,y,\hbar\in \liet^*\times\liet^*\times\bA^1)|x=\gamma^\hbar(y)\}$ and $\gamma^\hbar$ denotes the operator obtained from $\gamma$ by dilating its translational part by $\hbar$. This is the same as the \emph{transformation groupoid}
$$
\widetilde{W}^{aff}\# (\liet^*\times\bA^1)
$$
where $\widetilde{W}^{aff}$ now is regarded as a group ind-scheme which acts on $\liet^*\times\bA^1$ by $\gamma\mapsto\gamma^\hbar$. In particular setting $\hbar=1$ we have
$$
\spec(H^\bullet_{T^\vee\rtimes\bG_m}(Fl^{T^\vee\times\bG_m}))_{\hbar=1}\cong \widetilde{W}^{aff}\#\liet^*.
$$
Representations over $\liet^*$ of this groupoid are by definition precisely $\widetilde{W}^{aff}$-equivariant quasicoherent sheaves on $\liet^*$. Identifying $H_\bullet^{T^\vee\rtimes\bG_m}(Fl^{T^\vee\times\bG_m})_{\hbar=1}$ with $\diff(T)\#\bC W$ gives the algebraic Fourier transform of \ref{babyfourier}. Therefore it is natural to regard the category of representations of $\cT$ as the `Fourier transform description' of the category of modules for $Toda_1(G)$.

Since 
$$
H^\bullet_{T^\vee\rtimes\bG_m}(Fl)_{\hbar=1}\to H^\bullet_{T^\vee\rtimes\bG_m}(Fl^{T^\vee\times\bG_m})_{\hbar=1}
$$
is injective, and
$$
\cO(\pi_1(G^\vee)\times\liet^*\times\liet^*)\cong H^\bullet_{T^\vee\rtimes\bG_m}(Fl^{top})_{\hbar=1}\to H^\bullet_{T^\vee\rtimes\bG_m}(Fl)_{\hbar=1}
$$
is surjective in every piece of the right-hand pro-object, we obtain:
\begin{lem}\label{imaget}$\cT$ is the image of the morphism
$$
\widetilde{W}^{aff}\#\liet^* \xrightarrow{(p\circ\pi_1,i)} \pi_1(G^\vee)\times\liet^*\times\liet^*.
$$\end{lem}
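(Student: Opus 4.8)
The plan is to realize $\cT$ as squeezed between the fixed-point groupoid $\widetilde{W}^{aff}\#\liet^*$ and the scheme $X:=\pi_1(G^\vee)\times\liet^*\times\liet^*$, and then to read the statement off from the universal property of the scheme-theoretic image.

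First I would check that $\cT\hookrightarrow X$ is a closed immersion. Pulling back the second (cohomological) isomorphism of Theorem \ref{bez} along $\pi:\liet^*\to\liet^*//W$ and setting $\hbar=1$ identifies $H^\bullet_{T^\vee\rtimes\bG_m}(Fl^{top})_{\hbar=1}$ with $\cO(X)$, and the comparison map $\cO(X)\to H^\bullet_{T^\vee\rtimes\bG_m}(Fl)_{\hbar=1}$ is surjective on every term of the target pro-algebra (cf. \ref{cohom}, \ref{2.10}); applying $\spec$ realizes each term of $\cT$ as a closed subscheme of $X$. Since the transition maps of the pro-algebra are surjective these closed subschemes form an ascending chain, which stabilizes because $X$ is Noetherian; hence $\cT$ is a closed subscheme of $X$.

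Next I would invoke the localization theorem of \cite{GKM} recalled in \ref{2.10}: the restriction map $H^\bullet_{T^\vee\rtimes\bG_m}(Fl)_{\hbar=1}\to H^\bullet_{T^\vee\rtimes\bG_m}(Fl^{T^\vee\times\bG_m})_{\hbar=1}$ is injective, being generically an isomorphism with torsion-free source over $\cO(\liet^*)$. On spectra this says that the inclusion $\widetilde{W}^{aff}\#\liet^*\hookrightarrow\cT$ of the fixed-point groupoid is schematically dominant. It then remains to identify the composite $\widetilde{W}^{aff}\#\liet^*\hookrightarrow\cT\hookrightarrow X$ with the morphism of the statement: unwinding the identification $Fl^{T^\vee\times\bG_m}=\widetilde{W}^{aff}$ and the description of $\spec(H^\bullet_{T^\vee\rtimes\bG_m}(Fl^{T^\vee\times\bG_m}))_{\hbar=1}$ as the transformation groupoid, the source and target copies of $\liet^*$ in $X$ yield the component $i$, while the $\pi_1(G^\vee)$-factor of $X$---which records $\pi_0$ of $Fl$---yields $p\circ\pi_1$ via the quotient $\widetilde{W}^{aff}\to\widetilde{W}^{aff}/W^{aff}\cong\pi_1(G^\vee)$. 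So this composite is $(p\circ\pi_1,i)$.

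The conclusion is then formal: if $Z\subseteq X$ is a closed subscheme through which $(p\circ\pi_1,i)$ factors, the scheme-theoretic intersection $Z\cap\cT$ is a closed subscheme of $\cT$ through which the schematically dominant map $\widetilde{W}^{aff}\#\liet^*\to\cT$ factors, so $Z\cap\cT=\cT$ and $\cT\subseteq Z$; since $\cT$ is itself such a $Z$ it is the smallest one, i.e. the scheme-theoretic image, and the same squeeze matches up the evident groupoid structures on all three objects. The one place needing genuine care---and the main, if mild, obstacle---is the pro/ind bookkeeping: making sure that ``injective'' and ``surjective on every term'' for these pro-algebras really do translate into ``schematically dominant'' and ``closed immersion'' for the associated ind-schemes, and in particular that the colimit defining $\cT$ stabilizes inside $X$ so that its image there is a well-defined subscheme.
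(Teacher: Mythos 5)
Your two essential inputs are the same as the paper's: GKM localization gives injectivity of $H^\bullet_{T^\vee\rtimes\bG_m}(Fl)_{\hbar=1}\to H^\bullet_{T^\vee\rtimes\bG_m}(Fl^{T^\vee\times\bG_m})_{\hbar=1}$, and the surjectivity of $\cO(\pi_1(G^\vee)\times\liet^*\times\liet^*)\to H^\bullet_{T^\vee\rtimes\bG_m}(Fl)_{\hbar=1}$ onto every piece of the pro-object sandwiches $\cO(\cT)$ as the image; this is exactly how the lemma is obtained in \ref{2.10}, and your identification of the composite with $(p\circ\pi_1,i)$ is fine.

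However, your packaging contains a genuine error: the claim that the ascending chain of closed subschemes $\spec H^\bullet_{T^\vee\rtimes\bG_m}(\overline{Fl_\lambda})_{\hbar=1}\subset X$ stabilizes ``because $X$ is Noetherian.'' Noetherianity of $\cO(X)$ is the ascending chain condition on ideals, i.e.\ the \emph{descending} chain condition on closed subschemes; an ascending chain of closed subschemes corresponds to a descending chain of ideals, which need not stabilize (compare $(x)\supset(x^2)\supset\cdots$ in $k[x]$). And here it genuinely does not stabilize: each $\overline{Fl_\lambda}$ has only finitely many $T^\vee$-fixed points, so each term of $\cT$ is supported on finitely many graphs $\Gamma_\gamma$, while $\widetilde{W}^{aff}$ is infinite; the kernels $\ker\bigl(\cO(X)\to H^\bullet_{T^\vee\rtimes\bG_m}(\overline{Fl_\lambda})_{\hbar=1}\bigr)$ decrease without stabilizing (their intersection is zero, but each is nonzero). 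This is precisely why $\cT$ is an honest ind-scheme and why the paper devotes a section to groupoid ind-schemes. (A secondary point: $X$ is not even Noetherian when $\pi_1(G^\vee)$ is infinite, e.g.\ when $G$ has a central torus.) The repair is to read the lemma piecewise, as the paper implicitly does: for each $\lambda$ the term $\spec H^\bullet_{T^\vee\rtimes\bG_m}(\overline{Fl_\lambda})_{\hbar=1}$ is the scheme-theoretic image of $\coprod_{\gamma\in\overline{Fl_\lambda}^{T^\vee}}\Gamma_\gamma\to X$, these images are compatible as $\lambda$ grows, and the ``image'' in the lemma is the resulting ind-scheme rather than a single closed subscheme of $X$.
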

Here $p:\widetilde{W}^{aff}\to \pi_1(G^\vee)$ is the quotient map (with kernel $W^{aff}$) and $i:\widetilde{W}^{aff}\#\liet^*\to \liet^*\times\liet^*$ is the closed embedding.

The idea now is that this image is in some reasonable sense the quotient of $\widetilde{W}^{aff}\#\liet^*$ by the isotropy (i.e. maximal) subgroup of $W^{aff}\#\liet^*$ - as for instance can be seen on closed points - and thus one expects that representations of $\cT$ are $\widetilde{W}^{aff}$-equivariant quasicoherent sheaves on $\liet^*$ with trivial isotropy in $W^{aff}$. However, the fact that the isotropy subgroup is not flat over $\liet^*$ causes difficulties, and in fact this is why we must introduce the notion of `derived isotropy'. We deal with these issues in the following section.

We make one final remark. The localization theorem of \cite{GKM} also includes a description of the image of $H^\bullet_{T^\vee\rtimes\bG_m}(Fl)\to H^\bullet_{T^\vee\rtimes\bG_m}(Fl^{T^\vee\times\bG_m})$. We have of course already described this (via the calculation of $H^\bullet_{T^\vee\rtimes\bG_m}(Fl^{top})$), but the result of \cite{GKM} provides some crucial and not obvious additional information (see Section \ref{checktion}).

\section{Groupoids and descent}

\subsection{}This section contains a number of general lemmas on groupoid schemes which would suffice to prove Theorem \ref{thm1} if $\cT$ were a scheme rather than an ind-scheme. In fact, the arguments carry over more or less directly to the case of ind-schemes, but for readability we have chosen to leave the thorough treatment of ind-schemes to the following section.

Let $\mathcal{G}\rightrightarrows X$ be an algebraic groupoid over the scheme $X$. We denote by $s$, $t$ the two maps to $X$ (heads and tails). Expressions $\mathcal{G}\times_X$, $\times_X\mathcal{G}$ denote the Cartesian product using respectively $s$, $t$, and likewise expressions $\mathcal{O}(\mathcal{G})\otimes_{\mathcal{O}(X)}$, $\otimes_{\mathcal{O}(X)}\mathcal{O}(\mathcal{G})$ denote the tensor product using respectively $s^\#$, $t^\#$.

\subsection{A lemma on descent.}We make the important assumption that $t$ is flat. In that case, the category $Rep_X(\mathcal{G})$ of right $\mathcal{O}(\mathcal{G})$-comodules is abelian. We have:

\begin{lem}[faithfully flat descent]\label{lemma1}Suppose $f:Y\to X$ is faithfully flat (or more generally that it induces a universally injective map $\mathcal{O}(X)\to\mathcal{O}(Y)$ of $\mathcal{O}(X)$-modules). Then the functor \begin{align} f^*:Rep_X(\mathcal{G})\to Rep_Y(Y\times_X\mathcal{G}\times_XY)\end{align} is an equivalence of (monoidal) categories. \end{lem}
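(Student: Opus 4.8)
The plan is to prove this by exhibiting an explicit quasi-inverse built from descent data. Given a right $\mathcal{O}(Y\times_X\mathcal{G}\times_X Y)$-comodule $M$, the coaction in particular restricts (along the two closed embeddings $Y\hookrightarrow Y\times_X\mathcal{G}\times_X Y$ covering the identity section of $\mathcal{G}$, obtained from $s,t$) to an $\mathcal{O}(Y)\otimes_{\mathcal{O}(X)}\mathcal{O}(Y)$-module structure on $M$; equivalently, $M$ carries a descent datum for the faithfully flat (or universally injective) morphism $f:Y\to X$. First I would check that the groupoid coaction axioms force this to be an honest descent datum (cocycle condition on $Y\times_X Y\times_X Y$), so that by faithfully flat descent for quasicoherent sheaves — which holds under the stated universal-injectivity hypothesis — there is a unique $\mathcal{O}(X)$-module $N$ with $f^*N\cong M$ compatibly. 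Then the remaining content of the $\mathcal{O}(Y\times_X\mathcal{G}\times_X Y)$-coaction on $M$, which is $\mathcal{O}(\mathcal{G})$-linear data on the two copies of $Y$ away from the diagonal, should descend along $f\times f$ (again by flat descent, now over $\mathcal{G}$, using that $t$ is flat so that $\mathcal{O}(\mathcal{G})$ is a reasonable base and that $Y\times_X\mathcal{G}\times_X Y\to\mathcal{G}$ is the relevant faithfully flat cover) to a genuine $\mathcal{O}(\mathcal{G})$-coaction on $N$, i.e. an object of $Rep_X(\mathcal{G})$.

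Next I would verify that the two assignments $N\mapsto f^*N$ (with its induced $Y\times_X\mathcal{G}\times_X Y$-coaction, obtained by pulling back the $\mathcal{G}$-coaction along $Y\times_X\mathcal{G}\times_X Y\to\mathcal{G}$ and combining with the tautological descent datum on $f^*N$) and $M\mapsto N$ are mutually inverse. In one direction this is immediate from uniqueness in flat descent; in the other it amounts to the statement that the descent datum extracted from $f^*N$ recovers the original, which is again a formal consequence. Functoriality and exactness on both sides are clear since flat descent is an exact equivalence and $f^*$ is exact ($f$ being flat). Compatibility with the monoidal structures follows because the tensor product of comodules is computed on underlying modules (over $\mathcal{O}(X)$, resp. $\mathcal{O}(Y)$) and $f^*$ is monoidal for the quasicoherent tensor product, with the comodule structures matching by construction.

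The main obstacle, and the step deserving the most care, is the descent of the \emph{coaction} itself rather than merely the underlying module: one must check that the $\mathcal{O}(\mathcal{G})$-comodule axioms for $N$ follow from the $\mathcal{O}(Y\times_X\mathcal{G}\times_X Y)$-comodule axioms for $M$ after descent, which uses crucially that $t:\mathcal{G}\to X$ is flat (so that $Y\times_X\mathcal{G}$ is flat over $\mathcal{G}$ via the second projection and base change behaves well) and that the various fibre products appearing in the coassociativity diagram for the transported groupoid $Y\times_X\mathcal{G}\times_X Y$ are compatible, via $f$, with those for $\mathcal{G}$. Here one should be slightly careful that the two tensor-product conventions $\otimes_{\mathcal{O}(X)}$ using $s^\#$ versus $t^\#$ are tracked correctly, since the coaction map for $M$ lands in $M\otimes_{\mathcal{O}(Y)}\mathcal{O}(Y\times_X\mathcal{G}\times_X Y)$ and we must rewrite this in terms of $\mathcal{O}(\mathcal{G})$ after descent along both the source and target copies of $f$. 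Once this bookkeeping is set up, everything reduces to ordinary faithfully flat descent for modules applied twice (once over $X$, once over $\mathcal{G}$), and there are no further surprises.
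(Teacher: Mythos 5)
Your proposal is correct in outline but takes a genuinely different route from the paper. The paper's proof is a one-shot application of the Barr--Beck comonadicity theorem to the composite $Rep_X(\mathcal{G})\xrightarrow{Res}QCoh(X)\xrightarrow{f^*}QCoh(Y)$: it checks that this composite has the right adjoint $Coind^{\mathcal{G}}_X\circ f_*$, reflects isomorphisms, and preserves the relevant equalizers, and then identifies the induced comonad on $QCoh(Y)$ with $(-)\otimes_{\mathcal{O}(Y)}\mathcal{O}(Y\times_X\mathcal{G}\times_XY)$; no quasi-inverse is ever written down. You instead build the quasi-inverse explicitly by two applications of ordinary descent: corestrict the coaction along the closed subgroupoid $Y\times_XY\subset Y\times_X\mathcal{G}\times_XY$ (the pullback of the identity section) to get a descent datum and hence a module $N$ on $X$, then descend the remaining coaction. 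This is a valid plan and has the virtue of concreteness, but the labor you defer is real and concentrated in two places. First, after identifying $M\cong f^*N$, the target of the coaction is naturally $N\otimes_{\mathcal{O}(X)}\mathcal{O}(Y)\otimes_{\mathcal{O}(X)}\mathcal{O}(\mathcal{G})\otimes_{\mathcal{O}(X)}\mathcal{O}(Y)$ rather than $f^*\bigl(N\otimes_{\mathcal{O}(X)}\mathcal{O}(\mathcal{G})\bigr)$; you must use the descent isomorphism to collapse the spurious first copy of $\mathcal{O}(Y)$ before full faithfulness of $f^*$ can be invoked, and you must check that $(-)\otimes_{\mathcal{O}(X)}\mathcal{O}(\mathcal{G})\otimes_{\mathcal{O}(X)}\mathcal{O}(Y)$ carries the descent equalizer of $N$ to an equalizer (automatic when $f$, $s$, $t$ are flat; in the universally injective case this needs universal exactness of the Amitsur complex). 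Second, in the universally injective case $f^*$ is not exact, so your exactness aside should be dropped; more tellingly, descent of modules along a universally injective ring map is itself a nontrivial theorem whose standard proof is precisely the comonadicity argument the paper runs, so your route applies that technology twice (once over $X$, once over $\mathcal{G}$) where the paper applies it once, directly to the groupoid. What your approach buys is an explicit inverse functor and a proof readable without Barr--Beck; what the paper's buys is uniformity over both hypotheses on $f$ and the complete avoidance of the source/target bookkeeping you rightly flag as the delicate point.
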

\begin{remark}The usual statement of faithfully flat descent is the special case where $\mathcal{G}$ is the trivial groupoid.\end{remark}
\begin{proof}This is an instructive application of Barr-Beck. Consider the composition\begin{align}Rep_X(\mathcal{G})\xrightarrow{Res^\mathcal{G}_X}QCoh(X)\xrightarrow{f^*}QCoh(Y).\end{align}Then \begin{itemize}\item $f^*\circ Res^\mathcal{G}_X$ admits a right adjoint, namely $Coind^\mathcal{G}_X\circ f_*$, where $Coind^\mathcal{G}_X = (-)\otimes_{\mathcal{O}(X)}\mathcal{O}(\mathcal{G})$ is the right adjoint to $Res^\mathcal{G}_X$.\
\item $f^*\circ Res^\mathcal{G}_X$ reflects isomorphisms, since $f^*$ does ($f$ being faithfully flat), and $Res^\mathcal{G}_X$ does (being an exact faithful functor between abelian categories).\
\item $Rep_X(\mathcal{G})$ has, and $f^*\circ Res^\mathcal{G}_X$ preserves, equalizers of $f^*\circ Res^\mathcal{G}_X$-split equalizers. This is because $f^*$ has this property (by hypothesis) and $Res^\mathcal{G}_X$ has the stronger property that $Rep_X(\mathcal{G})$ has, and $Res^\mathcal{G}_X$ preserves, equalizers of $Res^\mathcal{G}_X$-equalizers (being an exact functor between abelian categories).
\end{itemize}Consequently $f^*\circ Res^\mathcal{G}_X$ induces an equivalence between $Rep_X(\mathcal{G})$ and the category of $f^*\circ Res^\mathcal{G}_X\circ Coind^\mathcal{G}_X\circ f_*$-comodules in $QCoh(Y)$. The comonad in question is nothing more than $(-)\otimes_{\mathcal{O}(X)}\mathcal{O}(\mathcal{G})\otimes_{\mathcal{O}(X)}\mathcal{O}(Y)\cong (-)\otimes_{\mathcal{O}(Y)}\mathcal{O}(Y\times_X\mathcal{G}\times_XY)$ with the comonad structure given by the groupoid structure on $Y\times_X\mathcal{G}\times_XY$.\end{proof}

\subsection{Adjacency and isotropy.}Let $C$ be the coequalizer (in affine schemes) of $\mathcal{G}\rightrightarrows X$; that is, $\mathcal{O}(C)=\{x\in\mathcal{O}(X)|s^\#(x)=t^\#(x)\}$ (a subalgebra of $\mathcal{O}(X)$). The \emph{adjacency groupoid} $\mathcal{T}$ is defined by setting $\mathcal{O}(\mathcal{T})$ to be the subalgebra of $\mathcal{O}(\mathcal{G})$ generated by the images of $s^\#$, $t^\#$. This is naturally an algebraic groupoid over $X$ and we have the maps $\mathcal{G}\to\mathcal{T}\to X\times_CX$ of algebraic groupoids over $X$. We also have the \emph{isotropy subgroup} $\mathcal{I}:=\mathcal{G}\times_\mathcal{T}X$ of $\mathcal{G}$ (here $X\to\mathcal{T}$ is the identity section). Let us write $M$ for the kernel of the surjective map $\mathcal{O}(\mathcal{G})\to\mathcal{O}(\mathcal{I})$; $M$ is nothing more than the ideal generated by $(s^\#-t^\#)(\mathcal{O}(X))$.

Heuristically one thinks of $\mathcal{G}$ as a space of arrows with heads and tails in $X$, which are composable in the natural way (and satisfy the groupoid axioms); the relation of being connected by an arrow is an equivalence relation on $X$. Then $\mathcal{T}$ is the adjacency groupoid of the equivalence relation, and $\mathcal{I}$ is the subgroup of $\mathcal{G}$ consisting of all arrows whose head and tail coincide. In some cases $C$ is the space of equivalence classes: for instance in the case of a transformation groupoid $\mathcal{G}=X\times\Gamma$, $C$ is the GIT quotient and if it is also a geometric quotient then this condition is satisfied. In that case, one might hope \footnote{For instance, this holds when $X$ is the reflection representation of a finite complex reflection group $\Gamma$ and $\cG=\Gamma\# X$; see \ref{fpd} to deduce a proof.} that $\mathcal{T}\to X\times_CX$ is an isomorphism. Naively one expects that formulas such as `$\mathcal{T}=\mathcal{G}/\mathcal{I}$' and `$Rep_X(\mathcal{T})=Rep_X(\mathcal{G})^\mathcal{I}$' to hold. Here $Rep_X(\mathcal{G})^\mathcal{I}$ denotes the full subcategory of $Rep_X(\mathcal{G})$ consisting of objects with trivial $\mathcal{I}$-action. However, this simply isn't true in general. Nonetheless we shall demonstrate some appropriate replacement, given the following hypothesis:

\begin{hyp}\label{hyp}\
\begin{enumerate}\item We assume that $\mathcal{G}$, $\mathcal{T}$ are both flat over $X$, with respect to both the heads and tails maps.\
\item We assume that $\mathcal{G}\to\mathcal{T}$ is the coequalizer of $\mathcal{G}\times_\mathcal{T}\mathcal{G}=\mathcal{G}\times_X\mathcal{I}\rightrightarrows\mathcal{G}$. In light of the first hypothesis, on the level of functions, this is the statement that the images of $s^\#$, $t^\#$ in $\mathcal{O}(\mathcal{G})$ together generate the entire subalgebra $\{x\in\mathcal{O}(\mathcal{G})|\Delta(x)-x\otimes 1\in \mathcal{O}(\mathcal{G})\otimes_{\mathcal{O}(X)}M \}$.\end{enumerate}\end{hyp}

\subsection{Behaviour on flat objects.}Consider the functor of restriction $F:Rep_X(\mathcal{T})\to Rep_X(\mathcal{G})^\mathcal{I}$. \begin{lem}\label{lemma2}$F$ induces an equivalence between the full subcategories of objects which are flat as quasicoherent sheaves on $X$.\end{lem}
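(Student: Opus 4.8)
The plan is to construct an explicit quasi-inverse $F'$ to $F$ on the flat subcategories, using faithfully flat descent (Lemma \ref{lemma1}) together with Hypothesis \ref{hyp}(2). First, I would observe that by Hypothesis \ref{hyp}(1), $\mathcal{T}\rightrightarrows X$ is a flat groupoid, so $Rep_X(\mathcal{T})$ is abelian, and the statement of Lemma \ref{lemma1} applies to it. The key structural input is that $\mathcal{G}\to\mathcal{T}$, by Hypothesis \ref{hyp}(2) and the flatness in \ref{hyp}(1), exhibits $\mathcal{O}(\mathcal{G})$ as the equalizer (on functions) of the two coaction-type maps $\mathcal{O}(\mathcal{G})\rightrightarrows\mathcal{O}(\mathcal{G})\otimes_{\mathcal{O}(X)}\mathcal{O}(\mathcal{I})$, i.e. $\mathcal{G}\times_X\mathcal{I}\rightrightarrows\mathcal{G}\to\mathcal{T}$ is a coequalizer diagram. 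Pulling this back, for any $\mathcal{T}$-representation $V$ that is flat over $X$, the object $F(V)$ with its residual $\mathcal{I}$-structure is recovered as an equalizer; this is the shape of a descent datum for the "quotient map" $\mathcal{G}\to\mathcal{T}$.

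The main construction goes as follows. Given $W\in Rep_X(\mathcal{G})^\mathcal{I}$ flat over $X$, I want to produce a $\mathcal{T}$-comodule. The natural candidate is $F'(W):=$ the equalizer of the two maps $W\otimes_{\mathcal{O}(X)}\mathcal{O}(\mathcal{G})\rightrightarrows W\otimes_{\mathcal{O}(X)}\mathcal{O}(\mathcal{G})\otimes_{\mathcal{O}(X)}\mathcal{O}(\mathcal{I})$ coming from (i) the $\mathcal{G}$-coaction composed with the $\mathcal{I}$-coaction on the $\mathcal{G}$-factor, and (ii) the $\mathcal{I}$-coaction on $W$ (which is trivial, since $W\in Rep_X(\mathcal{G})^\mathcal{I}$) together with a unit. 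Because $W$ is flat over $X$ and $\mathcal{O}(\mathcal{I})$ is a quotient of $\mathcal{O}(\mathcal{G})$ with the relevant flatness, this equalizer is computed "at the level of $\mathcal{O}(\mathcal{G})$", and by Hypothesis \ref{hyp}(2) it lands inside $W\otimes_{\mathcal{O}(X)}\mathcal{O}(\mathcal{T})$, giving a $\mathcal{T}$-coaction on $W$. One then checks $F\circ F'\cong\mathrm{id}$ using that the $\mathcal{I}$-action on $W$ was trivial, and $F'\circ F\cong\mathrm{id}$ using the coequalizer presentation of $\mathcal{G}\to\mathcal{T}$ from Hypothesis \ref{hyp}(2): the $\mathcal{T}$-comodule structure on $F(V)$ for $V\in Rep_X(\mathcal{T})$ flat over $X$ is exactly this equalizer, which is where flatness of $V$ over $X$ is essential so that $-\otimes_{\mathcal{O}(X)}(-)$ preserves the relevant equalizer.

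The step I expect to be the main obstacle is verifying that the equalizer defining $F'(W)$ genuinely carries a coassociative $\mathcal{T}$-coaction — that is, that the submodule $\{w\in W\otimes_{\mathcal{O}(X)}\mathcal{O}(\mathcal{G})\mid \ldots\}$ actually lies in $W\otimes_{\mathcal{O}(X)}\mathcal{O}(\mathcal{T})\subset W\otimes_{\mathcal{O}(X)}\mathcal{O}(\mathcal{G})$ and is stable under further comultiplication. This requires commuting the formation of the equalizer past the various tensor products, which is exactly what flatness of $W$ over $X$ buys us, combined with the precise form of Hypothesis \ref{hyp}(2) identifying $\mathcal{O}(\mathcal{T})$ with $\{x\in\mathcal{O}(\mathcal{G})\mid\Delta(x)-x\otimes 1\in\mathcal{O}(\mathcal{G})\otimes_{\mathcal{O}(X)}M\}$. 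A clean way to package all of this is again via Barr--Beck: $F$ is the functor of taking $\mathcal{I}$-invariants (equivalently, restriction along $X\to\mathcal{T}$ corestricted appropriately), it is exact on flat objects, preserves the relevant limits, and reflects isomorphisms there, so it suffices to identify the induced (co)monad with $-\otimes_{\mathcal{O}(X)}\mathcal{O}(\mathcal{T})$; Hypothesis \ref{hyp}(2) is precisely the statement that makes this identification hold. The restriction to flat objects throughout is what lets us avoid the failure of exactness that, as the introduction warns, forces the "derived isotropy" in the general (non-flat) case.
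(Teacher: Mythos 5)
Your proposal is essentially the paper's own argument: the heart of both is that for a flat $V$ with trivial $\mathcal{I}$-coaction, the coaction map $m$ lands in the equalizer of $V\otimes\mathcal{O}(\mathcal{G})\rightrightarrows V\otimes\mathcal{O}(\mathcal{G})\otimes\mathcal{O}(\mathcal{I})$, which flatness of $V$ together with Hypothesis \ref{hyp}(2) identifies with $V\otimes\mathcal{O}(\mathcal{T})$, so the comodule structure factors uniquely through $\mathcal{T}$. The paper carries out explicitly the diagram chase (coassociativity plus triviality of the $\mathcal{I}$-action) that you defer to the check $F\circ F'\cong\mathrm{id}$, and establishes fullness (needed for your $F'$ to be functorial on morphisms) from the injectivity of $V'\otimes\mathcal{O}(\mathcal{T})\to V'\otimes\mathcal{O}(\mathcal{G})$ for flat $V'$; the Barr--Beck and Lemma \ref{lemma1} packaging you mention is not needed.
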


\begin{proof}To avoid excessive notation, the symbol $\otimes$ will denote $\otimes_{\mathcal{O}(X)}$ unless given some other subscript.

The main point is the essential surjectivity. Let $V$ be an object in the target category. By definition, this is a flat $\mathcal{O}(X)$-module $V$ together with an $\mathcal{O}(\mathcal{G})$-comodule structure \begin{align}m:V\to V\otimes \mathcal{O}(\mathcal{G})\end{align}such that the composition \begin{align}V\xrightarrow{m} V\otimes \mathcal{O}(\mathcal{G})\to V\otimes \mathcal{O}(\mathcal{I})\end{align}coincides with $id\otimes 1$. Consider the diagram\begin{align}\begin{matrix}V &  \xrightarrow{?} & V\otimes\mathcal{O}(\mathcal{G}) & \xrightarrow{p} & V\otimes\mathcal{O}(\mathcal{I})\\
 & & \downarrow{*} & & \downarrow{_{m\otimes id}}\\
&  & V\otimes\mathcal{O}(\mathcal{G})\otimes\mathcal{O}(\mathcal{G}) & \xrightarrow{q} & V\otimes\mathcal{O}(\mathcal{G})\otimes\mathcal{O}(\mathcal{I})\end{matrix}\end{align}where the horizontal maps labelled $p,q$ are the obvious quotient maps. We perform the following diagram chase: $q\circ (id\otimes\Delta)\circ m=q\circ (m\otimes id)\circ m=(m\otimes id)\circ p\circ m= (m\otimes id)\circ p\circ (id\otimes 1) = q\circ (m\otimes id)\circ (id\otimes 1)=q\circ (id\otimes id\otimes 1)\circ m$. Thus $m$ lands inside the equalizer of $V\otimes\mathcal{O}(\mathcal{G})\rightrightarrows V\otimes\mathcal{O}(\mathcal{G})\otimes\mathcal{O}(\mathcal{I})$, the parallel morphisms being $q\circ (id\otimes \Delta)$ and $q\circ (id\otimes id\otimes 1)$. This equalizer is the kernel of \begin{align}V\otimes\mathcal{O}(\mathcal{G})\xrightarrow{q\circ(id\otimes(\Delta-id\otimes 1))}V\otimes\mathcal{O}(\mathcal{G})\otimes\mathcal{O}(\mathcal{I}),\end{align}which since $V$ is flat is precisely $V\otimes \mathcal{O}(\mathcal{T})$.

Thus we have the unique factorization \begin{align}V\to V\otimes\mathcal{O}(\mathcal{T})\to V\otimes\mathcal{O}(\mathcal{G})\end{align}of $m$. That this is an $\mathcal{O}(\mathcal{T})$-comodule structure on $V$ follows from the fact that $V\otimes \mathcal{O}(\mathcal{T})\otimes \mathcal{O}(\mathcal{T})\to V\otimes \mathcal{O}(\mathcal{G})\otimes \mathcal{O}(\mathcal{G})$ is injective. This is because $\mathcal{O}(\mathcal{T})\to\mathcal{O}(\mathcal{G})$ is injective and every $\mathcal{O}(X)$-module in sight is flat.

Next, $F$ is of course faithful, being the identity on underlying $\mathcal{O}(X)$-modules. Finally, $F$ is full on its flat $\mathcal{O}(X)$-modules. Indeed if $V$, $V'$ are any two such, and $V\to V'$ is a morphism of $\mathcal{O}(\mathcal{G})$-comodules then consider the diagram\begin{align}\begin{matrix}
V & \to & V\otimes \mathcal{O}(\mathcal{T}) & \to & V\otimes \mathcal{O}(\mathcal{G})\\ 
\downarrow &  & \downarrow &  & \downarrow\\ 
V' & \to & V'\otimes \mathcal{O}(\mathcal{T}) & \to & V'\otimes \mathcal{O}(\mathcal{G}).
\end{matrix}\end{align}The outer and rightmost squares are both commutative, and by flatness of $V'$ the lower horizontal arrow in the rightmost square is injective. It follows that the leftmost square is commutative, as required.

A closing remark: the proof shows that $F:Hom(V,V')\to Hom(FV,FV')$ is an isomorphism as long as $V'$ is a flat $\mathcal{O}(X)$-module ($V$ may be arbitrary).\end{proof}

\subsection{Fullness.} In fact, $F$ is full under some additional hypotheses which we will describe. First, we will require:

 \begin{hyp}\label{hyp2}$\mathcal{T}, \mathcal{G}$ are finite over $X$ (with respect to both $s$ and $t$).\end{hyp}

It follows that for any objects $V,V'$ in $Rep_X(\mathcal{T})$, the space $Hom_{\mathcal{O}(X)}(V,V')$ is also an object of $Rep_X(\mathcal{T})$, and $Hom_\mathcal{T}(V,V')$ coincides with its maximal invariant submodule:\begin{align}Hom_\mathcal{T}(V,V')= Hom_{\mathcal{O}(X)}(V,V')^\mathcal{T} & = \{f\in Hom_{\mathcal{O}(X)}(V,V')|m(f) = f\otimes 1\}\label{eq9}\\
& = m^{-1}(Hom_{\mathcal{O}(X)}(V,V')\otimes 1)\label{eq10}.\end{align}(Likewise for $\mathcal{G}$). To see this, consider the following diagram:\begin{align}\begin{matrix}V & \xrightarrow{m} & V\otimes\mathcal{O}(\mathcal{T}) & & & & & &\\
\downarrow{f} & & \downarrow{_{f\otimes id}} & & & & & &\\
V' & \xrightarrow{m} & V'\otimes\mathcal{O}(\mathcal{T}) & \xrightarrow{m\otimes id} & V'\otimes\mathcal{O}(\mathcal{T})\otimes\mathcal{O}(\mathcal{T}) & \xrightarrow{id\otimes id\otimes S} & \xrightarrow{id\otimes mult} & V'\otimes\mathcal{O}(\mathcal{T}).\end{matrix}\end{align}The high road is a map of $\mathcal{O}(X)$-modules from $V$ to $V'\otimes_{\mathcal{O}(X)}\mathcal{O}(\mathcal{T})$ with its $\mathcal{O}(X)$-structure coming from $V'$ (or equivalently $t^\#$); this is different from the one we have been using until now. By the finiteness hypothesis this is the same as an element of $Hom_{\mathcal{O}(X)}(V,V')\otimes_{\mathcal{O}(X)}\mathcal{O}(\mathcal{T})$. One checks that the map so constructed from $Hom_{\mathcal{O}(X)}(V,V')$ to $Hom_{\mathcal{O}(X)}(V,V')\otimes_{\mathcal{O}(X)}\mathcal{O}(\mathcal{T})$ is $\mathcal{O}(X)$-linear (in the usual sense) and makes $Hom_{\mathcal{O}(X)}(V,V')$ into a representation of $\mathcal{T}$. The condition that $f$ is invariant is the condition that the high road of the diagram is equal to $f\otimes 1$. This is equal to the low road. Since the tail of the diagram is an isomorphism (in fact, an involution!) it is equivalent to the condition that the square is commutative.

Consequently, $F$ is full if and only if $F$ `reflects invariants': the natural map $V^\mathcal{T}\to V^\mathcal{G}$ is an isomorphism (for all $V$).

\subsection{Reflection of invariants.}We will now give some conditions which guarantee that $F$ reflects invariants independently of any earlier hypothesis (of course finite flatness is required to deduce fullness from this). For instance, one such condition is:

 \begin{hyp}\label{hyp3}\
The equalizer $\mathcal{O}(C)\to\mathcal{O}(X)\rightrightarrows\mathcal{O}(\mathcal{G})$ is split, with a ($t^\#$) $\mathcal{O}(X)$-linear section $\beta$ of $t^\#$.\end{hyp}

Indeed, in that case one may consider the composition $\alpha$\begin{align}V\xrightarrow{m}V\otimes\mathcal{O}(\mathcal{T})\xrightarrow{id\otimes f} V\otimes\mathcal{O}(\mathcal{G})\xrightarrow{id\otimes\beta}V.\end{align}Note that the condition that $\beta$ be ($t^\#$) $\mathcal{O}(X)$-linear is necessary for this to be well-defined. Certainly $\alpha$ is the identity on $V^\mathcal{G}$, and so in particular on $V^\mathcal{T}$. We claim moreover that $\alpha$ is a section of the inclusion $V^\mathcal{T}\to V$; we have \begin{align}m\circ\alpha & =m\circ(id\otimes\beta)\circ(id\otimes f)\circ m\\
&=(id\otimes id\otimes\beta)\circ(m\otimes id)\circ(id\otimes f)\circ m\\
&=(id\otimes id\otimes\beta)\circ(id\otimes id\otimes f)\circ(m\otimes id)\circ m\\
&=(id\otimes id\otimes\beta)\circ(id\otimes id\otimes f)\circ(id\otimes\Delta)\circ m.\end{align}Since $\mathcal{O}(\mathcal{T})$ is generated by $s^\#(\mathcal{O}(X))$ and $t^\#(\mathcal{O}(X))$, we have $m(V)\subset V\otimes s^\#(\mathcal{O}(X))\subset V\otimes \mathcal{O}(\mathcal{T})$. By linearity of $\Delta$, we have therefore $(id\otimes\Delta)\circ m(V)\subset V\otimes 1\otimes s^\#(\mathcal{O}(X))$. Since $\beta\circ s^\#(\mathcal{O}(X))\subset\mathcal{O}(C)$, we get finally $m\circ\alpha(V)\subset V\otimes \mathcal{O}(C)=V\otimes 1\subset V\otimes\mathcal{O}(\mathcal{T})$. We conclude as follows: suppose $v\in V^\mathcal{G}$. Then $v=\alpha(v)\in V^\mathcal{T}$, as required.

It may happen that $\mathcal{O}(X)\rightrightarrows\mathcal{O}(\mathcal{G})$ does not admit a split equalizer globally, but does locally. Thus we make:

\begin{hyp}[Replacement for Hypothesis \ref{hyp3}\label{hyp4}]\
For every closed point $x$ of $X$, there exists a $(t^\#)$ $\mathcal{O}(X)$-linear section of $t^\#_x:\mathcal{O}(X_x)\to\mathcal{O}(\mathcal{G}_{(x,X)})$  (denoted by $\beta_{x}$) such that $s^\#_x\circ\beta_{x}\circ s^\#$ and $t^\#_x\circ\beta_{x}\circ s^\#$ coincide in $\mathcal{O}(\mathcal{G}_{(x,x)})$ (equivalently, in $\mathcal{O}(\mathcal{T}_{(x,x)})$).\end{hyp}

\begin{remark} This is satisfied if, for each $x$, the groupoid $\mathcal{G}_{(x,x)}$ over $X_x$ satisfies Hypothesis \ref{hyp3}. Hypothesis \ref{hyp4} seems to be weaker in general.\end{remark}

We run through the previous argument, starting from the composition $\alpha_{x}$ given by \begin{align}V\xrightarrow{m}V\otimes\mathcal{O}(\mathcal{T})\xrightarrow{id\otimes f} V\otimes\mathcal{O}(\mathcal{G})\xrightarrow{id\otimes\beta_{x}}V_x.\end{align}We have that $\alpha_{x}$ coincides with the localization map when restricted to $V^\mathcal{G}$. Also arguing as before we have that \begin{align}m_x\circ\alpha_x=(id\otimes id\otimes\beta_x)\circ(id\otimes id\otimes f)\circ(id\otimes\Delta)\circ m,\end{align}where $m_x$ denotes the comultiplication map $V_x\to V\otimes\mathcal{O}(\mathcal{T}_{(X,x)})$. We conclude as before that $m_x\circ\alpha_{x}(V)\subset V_x\otimes 1$ after passing to $V\otimes\mathcal{O}(\mathcal{T}_{(x,x)})$. Since $\mathcal{T}_{(x,x)}$ is a groupoid over $X_x$, and $V_x$ its representation, it must be that $m_x\circ\alpha_x(v)=v\otimes 1$ in $V\otimes\mathcal{O}(\mathcal{T}_{(x,x)})=V_x\otimes_{\mathcal{O}(X_x)}\mathcal{O}(\mathcal{T}_{(x,x)})$ for any $v\in V$. So for $v\in V^\mathcal{G}$, we have $m(v)=m_x\circ\alpha_x(v)=v\otimes 1$ in $V\otimes\mathcal{O}(\mathcal{T}_{(x,x)})$. Since this is true for all $x$, we get finally that $m(v)=v\otimes 1$ in $V\otimes\mathcal{O}(\mathcal{T}_U)$ for some open neighborhood $U$ of the diagonal $X\subset X^2$, whenever $v\in V^\mathcal{G}$.

To extend this equality over $X^2$, we will need some further hypothesis. There are probably several options, but here is a natural choice:

\begin{hyp}\label{hyp5}There exist closed subschemes $\mathcal{R}_i$ of $\mathcal{T}$ such that:\begin{enumerate}\item The projection map $\mathcal{G}\times_\mathcal{T}\mathcal{R}_i\to\mathcal{R}_i$ induces a universally injective map of $\cO(X)$-modules for each $i$, and\
\item $\mathcal{T}_U$ and the various $\mathcal{R}_i$ \emph{generate} $\mathcal{T}$.\end{enumerate}\end{hyp}
Here the second condition means precisely that the multiplication maps $A_1\times_X\ldots\times_X A_n\to\mathcal{T}$, where $n$ ranges from $1$ to $\infty$ and the $A_j$ range over $\mathcal{T}_U$ and the various $\mathcal{R}_i$ (allowing repeats), induce jointly universally injective maps of $\cO(X)$-modules.

The first condition guarantees that the map $V\otimes\mathcal{O}(\mathcal{R})\to V\otimes\mathcal{O}(\mathcal{G}\times_\mathcal{T}\mathcal{R})$ is injective; it follows that for any $v\in V^\mathcal{G}$, $m(v)$ and $v\otimes 1$ have the same image in $V\otimes\mathcal{O}(\mathcal{R})$. The second condition gives that the various compositions 
\begin{align}V\otimes\mathcal{O}(\mathcal{T})\xrightarrow{id\otimes\Delta^{n-1}}V\otimes\mathcal{O}(\mathcal{T})\otimes\ldots\otimes\mathcal{O}(\mathcal{T})\to V\otimes\mathcal{O}(A_1)\otimes\ldots\otimes\mathcal{O}(A_n)\end{align}are jointly injective. If $v\in V^\mathcal{G}$ then the image of $m(v)$ under any one of these compositions certainly coincides with $v\otimes 1\otimes\ldots\otimes 1$, and hence $m(v)=v\otimes 1$ as required.

\section{Ind-schemes}

\subsection{}In this section, we develop the theory of groupoid ind-schemes to the point where we are able to formulate appropriate replacements for the hypotheses, arguments and conclusions of the previous section.

\subsection{}Consider the collection of non-empty countable cofiltered systems of $\mathcal{O}(X)$-modules with surjective transition maps. These form an additive category, denoted $PQCoh(X)$, where by definition \begin{align}Hom_{PQCoh(X)}((V_j)_{j\in J},(W_k)_{k\in K})=\nlim_{k}\colim_{j}Hom_{QCoh(X)}(V_j,W_k).\end{align}
An equivalent, and useful, way to think about this is as follows. A morphism $(V_j)_{j\in J}\to (W_k)_{k\in K}$ consists of the following data:\begin{enumerate}\item For each $k\in K$, a cofinal subsystem $S(k)$ of $J$, satisfying $S(k)\subset S(k')$ whenever $k\to k'$;\
\item For each $j\in S(k)$, a morphism $f_j^k:V_j\to W_k$, such that the diagram\begin{align}\begin{matrix}V_j & \to & W_k\\
\downarrow & & \downarrow\\
V_{j'} & \to & W_{k'}\end{matrix}\end{align}commutes whenever it exists.\end{enumerate}
We take such data up to equivalence; two such data $(S,f), (S',{f'})$ are equivalent if for every $k$, every $j\in S(k), j'\in S'(k)$, and every lower (upper?) bound $j''$ of $j,j'$, the diagram \begin{align}\begin{matrix} V_{j''} & \xrightarrow{} & V_j\\
\downarrow & & \downarrow{_{f_j^k}}\\
V_{j'} & \xrightarrow{(f')_{j'}^k} & W_k\end{matrix}\end{align}commutes. It is enough to check for $k$ in some cofinal subsystem of $K$, for $(j,j')$ in some cofinal subsystem of $S(k)\times S'(k)$, and for $j''$ being any one (rather than all) lower bound(s) of $j,j'$.

Yet another way to think of this is as follows: we view $\nlim_{j}V_j$, $\nlim_kV_k$ as topological $\mathcal{O}(X)$-modules (with the pro-discrete topology) and then $Hom_{PQCoh(X)}((V_j),(W_k))$ is none other than the set of continuous morphisms between these topologized limits. For this, the countability is essential: a countable cofiltered system $(V_j)$ admits a cofinal inverse (i.e. ordered as $\mathbb{N}$) subsystem, and consequently each map $\nlim_jV_j\to V_j$ is surjective if the transition morphisms are. To present a projectively discrete topological $\mathcal{O}(X)$-module as the limit of an object of $PQCoh(X)$ is to give a countable cofinal subsystem of its lattice of open submodules. However we will not usually think of $PQCoh(X)$ in this way, preferring to reserve the notation $\nlim$ for the functor\begin{align}\nlim:PQCoh(X)\to QCoh(X).\end{align}\noindent We note that $\nlim$ is right adjoint to the functor $QCoh(X)\to PQCoh(X)$ which takes a quasi-coherent sheaf to the corresponding single-object cofiltered system. $\nlim$ is faithful.\\

\noindent Perhaps the most useful way to think of this is given by the following:\begin{lem}\begin{enumerate}\item Every object of $PQCoh(X)$ is isomorphic to an inverse (i.e. ordered as $\mathbb{N}$) system;\
\item Let $V$ be an object of $PQCoh(X)$, let $(W_k)_{k\in\mathbb{N}}$ be an inverse system in $PQCoh(X)$, and let $V\to W$ be a morphism; then there exists an isomorphism $(U_i)_{i\in\mathbb{N}}\to V$ such that the composition $(U_i)\to V\to (W_k)$ is equivalent to a map of inverse systems in the traditional sense. In other words, writing $(f,S)$ for said composition, we may take $S(k)=[k,\infty)$ for all $k$. In other other words, the composition $(U_i)\to V\to (W_k)$ is equivalent to a surjective inverse system of morphisms.\end{enumerate}\end{lem}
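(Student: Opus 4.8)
The plan is to establish (1) first and deduce (2) from it. Statement (1) is essentially the combinatorial observation already recorded in the discussion preceding the lemma: a non-empty countable cofiltered system $(V_j)_{j\in J}$ contains a cofinal subsystem indexed by $\mathbb{N}$ --- enumerate $J$ and build inductively a chain $i_0,i_1,\ldots$ with $i_{k+1}$ lying below both $i_k$ and the $k$-th listed object --- and the composite transition maps along this chain are surjective because the original ones are, so $(V_{i_k})_{k\in\mathbb{N}}$ is an object of $PQCoh(X)$. What remains for (1) is to note that the inclusion $(V_{i_k})_k\to (V_j)_j$ is an isomorphism in $PQCoh(X)$: from the formula $Hom_{PQCoh(X)}((A_a),(B_b))=\nlim_b\colim_a Hom_{QCoh(X)}(A_a,B_b)$, cofinality of $\{i_k\}\subset J$ makes this inclusion induce a bijection on $Hom(Z,-)$ for every object $Z$, whence it is invertible by Yoneda. (There is a minor wrinkle if the cofiltered \emph{category} $J$ is not a poset; one handles this by the standard reduction of a countable cofiltered category to a cofinal functor from $\mathbb{N}^{\mathrm{op}}$, which is in any case already implicit in the paragraph above.)

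For (2), using (1) and the fact that morphisms transport along isomorphisms, we may assume at the outset that $V=(V_n)_{n\in\mathbb{N}}$ is itself an inverse system. Present the morphism $V\to W$ by data $(S,f)$: for each $k$ a cofinal --- hence unbounded --- subsystem $S(k)\subseteq\mathbb{N}$, nested so that $S(k)\subseteq S(k')$ whenever $k\geq k'$, together with maps $f_j^k\colon V_j\to W_k$ for $j\in S(k)$ satisfying the stated compatibility. Choose indices $i_0<i_1<i_2<\cdots$ with $i_k\in S(k)$, which is possible inductively since each $S(k)$ is unbounded, and put $U_k:=V_{i_k}$ with the inherited transition maps. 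Then $(U_k)_k$ is an inverse system and $(U_k)_k\to(V_n)_n$ is an isomorphism in $PQCoh(X)$ exactly as in (1).

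It remains to identify the composite $(U_k)_k\to V\to W$. Since the $i_k$ increase and $S$ decreases, for every $l\geq k$ we have $i_l\in S(l)\subseteq S(k)$, so $f_{i_l}^k\colon U_l\to W_k$ is defined; a short check with the compatibility of $(S,f)$ shows that the assignment $S''(k):=[k,\infty)$ together with the maps $f_{i_l}^k$ is a valid morphism datum and represents the composite. Applying the same compatibility with the identity morphism $k\to k$ of $W$ gives $f_{i_l}^k=f_{i_k}^k\circ(U_l\to U_k)$ for $l>k$, so this datum is precisely a strict, level-wise map of $\mathbb{N}$-indexed inverse systems, carried by the family $f_{i_k}^k\colon U_k\to W_k$ --- which is the asserted conclusion that $S(k)$ may be taken to be $[k,\infty)$. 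A cleaner equivalent route is topological: realize $V,W$ as the pro-discrete modules $\nlim V_n$, $\nlim W_k$; continuity forces each composite $\nlim V_n\to\nlim W_k\to W_k$ to factor through some $V_{i_k}$, the $i_k$ may be taken strictly increasing, and surjectivity of $\nlim V_n\to V_{i_k}$ --- valid by countability --- descends the commutativities to produce the level maps $U_k\to W_k$.

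The work here is bookkeeping rather than conceptual, and the point that will require care is the equivalence relation on morphism data in $PQCoh(X)$: one must verify that shrinking $S''(k)$ to $[k,\infty)$, and more generally computing the composition of two morphisms of pro-objects, only alters the chosen representative within its equivalence class. The direction of the nesting $S(k)\subseteq S(k')$ must also be tracked accurately, since it is exactly what guarantees that $f_{i_l}^k$ is defined for all $l\geq k$ and hence that the composite really is level-wise rather than merely a reindexed map.
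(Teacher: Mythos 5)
Your argument is correct. The paper states this lemma without proof --- the only hint it offers is the preceding remark that a countable cofiltered system admits a cofinal inverse (i.e.\ $\mathbb{N}$-indexed) subsystem --- and your cofinal-reindexing argument (pass to an $\mathbb{N}$-chain for (1), then choose $i_0<i_1<\cdots$ with $i_k\in S(k)$ using the unboundedness and nesting of the $S(k)$, and check via the compatibility squares that the resulting level-wise datum represents the same equivalence class of morphism) is precisely the routine verification being left to the reader.
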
\noindent It follows of course that any sequence in $PQCoh(X)$ is isomorphic to an inverse system of sequences.

\subsection{}$PQCoh(X)$ is not abelian, but it is exact. First we describe the admissible sequences:

\begin{lem}Let $0\to U\to V\to W \to 0$ be a sequence in $PQCoh(X)$ which is isomorphic to an inverse system of of short exact sequences. Then $0\to\nlim U\to\nlim V\to \nlim W\to 0$ is exact.\end{lem}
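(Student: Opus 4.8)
The plan is: reduce to an honest $\mathbb{N}$-indexed diagram; observe that left-exactness of $\nlim$ costs nothing; and boil everything down to the surjectivity of $\nlim V\to\nlim W$, which is a ``surjective Mittag--Leffler'' argument that works precisely because transition maps in $PQCoh(X)$ are surjective by fiat. So I would begin by fixing an isomorphism of the given sequence with an inverse system of short exact sequences; replacing the index system by a cofinal subsystem ordered as $\mathbb{N}$ (which changes neither the isomorphism class of the sequence nor the three limits), I may assume we have exact rows $0\to U_i\to V_i\to W_i\to 0$ for $i\in\mathbb{N}$ in which all the transition maps $\phi$ in each of the three columns are surjective --- the latter because $U$, $V$, $W$ are objects of $PQCoh(X)$. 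Write $\nlim U=\{(u_i)\in\prod_i U_i\mid \phi(u_{i+1})=u_i\}$, and similarly for $V$ and $W$.

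Left-exactness requires no idea: $\nlim U\to\nlim V$ is injective since each $U_i\to V_i$ is, and if $(v_i)\in\nlim V$ dies in $\nlim W$ then each $v_i$ lies in $\ker(V_i\to W_i)=U_i$, and these elements form a compatible system, so $(v_i)$ is in the image of $\nlim U$; conversely that image visibly lies in the kernel of $\nlim V\to\nlim W$. The whole statement thus reduces to surjectivity of $\nlim V\to\nlim W$.

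For that, given $(w_i)\in\nlim W$, I would build a compatible lift $(v_i)\in\nlim V$ by induction on $i$: choose $v_0$ over $w_0$ by surjectivity of $V_0\to W_0$; given a compatible chain $v_0,\dots,v_n$ with each $v_i\mapsto w_i$, pick any $v'\in V_{n+1}$ over $w_{n+1}$, note $v_n-\phi(v')$ maps to $0$ in $W_n$ (both $v_n$ and $\phi(v')$ map to $w_n$) hence lies in $U_n$, lift it to $u\in U_{n+1}$ using surjectivity of $U_{n+1}\to U_n$, and set $v_{n+1}=v'+u$; then $v_{n+1}\mapsto w_{n+1}$ and $\phi(v_{n+1})=\phi(v')+(v_n-\phi(v'))=v_n$. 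The system $(v_i)_{i\ge 0}$ so produced is the desired preimage. Equivalently, one can apply the snake lemma to the operator $1-\phi$ on $\prod_i(-)$ of the three systems and observe that surjectivity of the transition maps makes $1-\phi$ surjective on $\prod_i U_i$, i.e. $R^1\nlim$ of $(U_i)$ vanishes, and this $R^1$ is exactly the obstruction term.

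I do not expect a genuine obstacle. The only place an argument is needed is the recursive lift, and it succeeds precisely because surjectivity of transition maps is built into the definition of $PQCoh(X)$; the reduction to $\mathbb{N}$-indexed systems is the content of the preceding lemma together with countable cofinality. The sole bookkeeping point is to check that ``isomorphic to an inverse system of short exact sequences'' is used compatibly with the surjectivity constraint on columns --- which it is, since $U$, $V$, $W$ are by hypothesis objects of $PQCoh(X)$.
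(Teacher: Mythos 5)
Your argument is correct and is exactly the standard ``$\varprojlim^1$ vanishes for surjective inverse systems'' proof: the paper states this lemma without proof, treating it as the classical Mittag--Leffler fact, and your recursive lifting of $(w_i)$ using surjectivity of the transition maps of $(U_i)$ is precisely the intended (omitted) argument. No gaps.
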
\noindent We call such sequences \emph{Mittag-Leffler}. To justify the name, observe first that given a morphism $V\to W$ in $PQCoh(X)$, presented as $(V_i)\xrightarrow{(S,f)}(W_j)$ say, the property that every extant $f_i^j$ is surjective is independent of the presentation. These are precisely the epimorphisms (epis) in $PQCoh(X)$. Next, we observe that an epimorphism $V\to W$ may be extended to a Mittag-Leffler sequence $U\to V\to W$ if and only if for every (equivalently, some) presentation of $V\to W$ as a surjective inverse system of morphisms $V_i\to W_i$, the resulting inverse system $ker(V_i\to W_i)$ satisfies the Mittag-Leffler condition. (In that case, to construct $U$ we take the stabilization of the pointwise kernel of any presentation of $V\to W$ as a surjective inverse system of morphisms). We note also that monomorphisms are the same as morphisms which give injections in the limit.

\subsection{}The class of Mittag-Leffler sequences is an exact structure for $PQCoh(X)$ (i.e. $PQcoh(X)$ has a unique structure of exact category in which the admissible sequences are precisely Mittag-Leffler sequences). This is a simple excersise in diagram chasing. $PQCoh(X)$ has all cokernels, and every cokernel map is admissible. $PQCoh(X)$ also has all kernels, and every kernel map is admissible. If $f:V\to W$ has kernel $K$ and cokernel $C$ then the map 
$$
coim(f):=coker(K\to V)\to ker(W\to C)=:im(f)
$$
is always both epi and mono, but it is not always an isomorphism. It is an isomorphism if for some (equivalently any) presentation of $f:V\to W$ as a surjective inverse system of morphisms $V_i\to W_i$, the inverse system $ker(V_i\to W_i)$ satisfies the Mittag-Leffler condition (but not conversely!); in that case $K$ is the stabilization of $ker(V_i\to W_i)$. We call such $f$ admissible.%; the kernel and cokernel of $f$ are called admissible if $f$ is.

\subsection{}If $X\to Y$ then we have the pushforward functor $PQCoh(X)\to PQCoh(Y)$. This functor preserves kernels and cokernels, kills no objects, and reflects isomorphisms. Consequently it preserves and reflects monos, epis, and admissible morphisms. In particular it preserves and reflects Mittag-Leffler sequences.

\subsection{}We say an object of $PQCoh(X)$ is flat, coherent etc. if it is isomorphic to a (surjective countable) cofiltered system of flat, coherent etc. $\mathcal{O}(X)$-modules. Sometimes (e.g. `coherent') this property is independent of the presentation, but more usually (e.g. `flat') it depends very much on the presentation.%monos and epis, admissible monos, and admissible epis. In particular it preserves and reflects Mittag-Leffler sequences.

\subsection{}$PQCoh(X)$ is monoidal: we set $(V_j)_{j\in J}\otimes(W_k)_{k\in K}=(V_j\otimes W_k)_{(j,k)\in J\times K}$. The one-object cofiltered system $\mathcal{O}(X)$ is the unit. It is convenient to note that for inverse systems $(V_j)_{j\in\mathbb{N}}, (W_k)_{k\in\mathbb{N}}$, we have $(V_j)\otimes(W_k)\cong (V_j\otimes W_j)_{j\in\mathbb{N}}$.

For instace, let $A$ be a flat coherent (i.e. finite rank projective) sheaf on $X$, regarded as a single-object cofiltered system in $PQCoh(X)$. Since $A$ is dualizable, $A\otimes(-)$ is a right adjoint and thus we have $A\otimes\nlim(V)=\nlim(A\otimes V)$ for any object $V$ of $PQCoh(X)$. Consequently, if more generally $A$ is a flat coherent object of $PQCoh(X)$, and $V$ is any object of $PQCoh(X)$, we have $\nlim(A\otimes\nlim(V))=\nlim(A\otimes V)$. The same formulas hold if $V$ is replaced with a countable cofiltered system with not necessarily surjective morphisms, from which point $(3)$ of the following otherwise easy lemma follows:

\begin{lem}Let $A$ be an object of $PQCoh(X)$. Then \begin{enumerate}\item $A\otimes(-)$ preserves cokernels, hence epis;\
\item If $A$ is flat then $A\otimes(-)$ preserves also admissible morphisms and their kernels, hence Mittag-Leffler sequences and admissible monos (but not monos or kernels in general);
\item If $A$ is flat coherent then $A\otimes(-)$ preserves also kernels, hence monos.\end{enumerate}\end{lem}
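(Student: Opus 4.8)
The plan is to reduce everything to levelwise statements about honest inverse systems of $\mathcal{O}(X)$-modules. By the structure lemmas above I may present $A$ as an inverse system $(A_i)_{i\in\mathbb{N}}$ (with each $A_i$ flat in case (2), finite rank projective in case (3)), any object $V$ as $(V_i)_{i\in\mathbb{N}}$, and any morphism $f\colon V\to W$ as a genuine map of inverse systems $(f_i\colon V_i\to W_i)_i$; moreover the tensor product may then be computed diagonally, $A\otimes V\cong(A_i\otimes V_i)_i$ and $A\otimes f=(A_i\otimes f_i)_i$ (all tensor products over $\mathcal{O}(X)$). Two facts get used repeatedly: each $A_i\otimes(-)$ is right exact on $QCoh(X)$, and the transition maps of $(A_i)$ are surjective because $A$ lies in $PQCoh(X)$. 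Part (1) is then immediate: cokernels are computed levelwise and $A_i\otimes(-)$ preserves them, so $A\otimes\mathrm{coker}(f)=\mathrm{coker}(A\otimes f)$; and tensoring a levelwise-surjective presentation of an epi by $A$ yields another such presentation, so epis are preserved.

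For part (2) the key step is that $A\otimes(-)$ sends Mittag-Leffler sequences to Mittag-Leffler sequences. If $0\to U\to V\to W\to 0$ is presented by an inverse system of short exact sequences $0\to U_i\to V_i\to W_i\to 0$, then flatness of each $A_i$ keeps $0\to A_i\otimes U_i\to A_i\otimes V_i\to A_i\otimes W_i\to 0$ exact, and the transition maps stay surjective; so the diagonal system exhibits $A\otimes(-)$ of the sequence as an inverse system of short exact sequences. Since admissible monos are precisely the first maps of Mittag-Leffler sequences, and admissible morphisms are precisely composites of admissible epis with admissible monos, both classes are preserved. For the kernel assertion, factor an admissible $f$ as $V\twoheadrightarrow I\hookrightarrow W$ with $0\to\ker f\to V\to I\to 0$ Mittag-Leffler; applying $A\otimes(-)$ and using that $A\otimes I\to A\otimes W$ is mono identifies $\ker(A\otimes f)$ with $A\otimes\ker f$. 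Concretely, in a surjective inverse-system presentation with Mittag-Leffler kernels $K_i=\ker f_i$, one has $\ker(A\otimes f)=$ the stabilization of $(A_i\otimes K_i)_i=(A_i\otimes\bar K_i)_i$, where $\bar K_i=\mathrm{im}(K_N\to K_i)$ for $N\gg i$, using flatness of $A_i$ and surjectivity of $A_N\to A_i$, so that only a \emph{finite} intersection intervenes.

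For part (3) the new input is that $A$ flat coherent means each $A_i$ is dualizable in $QCoh(X)$, hence $A_i\otimes(-)$ is a right adjoint and commutes with arbitrary limits; in particular it commutes with arbitrary, possibly infinite, intersections of submodules. Given an arbitrary $f$ presented by $(f_i\colon V_i\to W_i)_i$, the kernel $\ker f$ is the stabilization $(\bar K_i)_i$ of the levelwise kernels $K_i=\ker f_i$, with $\bar K_i=\bigcap_{j\ge i}\mathrm{im}(K_j\to K_i)$. Likewise $\ker(A\otimes f)$ is the stabilization of $(A_i\otimes K_i)_i$, i.e.\ $\big(\bigcap_{j\ge i}\mathrm{im}(A_j\otimes K_j\to A_i\otimes K_i)\big)_i$; surjectivity of $A_j\to A_i$ and flatness of $A_i$ turn each term into $\bigcap_{j\ge i}A_i\otimes\mathrm{im}(K_j\to K_i)$, and coherence of $A_i$ then pulls $A_i\otimes(-)$ through the infinite intersection to give $A_i\otimes\bar K_i$, as submodules of $A_i\otimes V_i$. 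So $\ker(A\otimes f)=A\otimes\ker f$, and preservation of monos follows because in $PQCoh(X)$, which has kernels and a zero object, a morphism is mono iff its kernel vanishes, while $A\otimes 0=0$.

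The hard part is exactly this last identification in (3): commuting $A\otimes(-)$ past the \emph{infinite} decreasing intersection that defines the stabilization. This is where mere flatness fails and finite-rank projectivity (equivalently dualizability, equivalently preservation of arbitrary limits) is essential; the rest is bookkeeping with the inverse-system description of the exact structure on $PQCoh(X)$.
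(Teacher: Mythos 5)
Your parts (1) and (2) are correct, and they are precisely the levelwise arguments the paper has in mind when it calls these parts of the lemma ``otherwise easy'': cokernels are computed pointwise, flatness of each $A_i$ preserves the pointwise short exact sequences of a Mittag-Leffler presentation, and in the admissible case the kernel really is the stabilization of the pointwise kernels, where the defining intersection is achieved at a finite stage so that flatness of $A_i$ plus surjectivity of $A_N\to A_i$ suffices. For (3) you have also isolated the same key input as the paper --- a finite rank projective is dualizable, so $A_i\otimes(-)$ is a right adjoint and commutes with limits --- which the paper packages as the identity $\nlim(A\otimes V)=\nlim(A\otimes\nlim V)$, valid even for countable cofiltered systems $V$ with non-surjective transition maps, and from which it deduces (3) without ever writing down the kernel explicitly.

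The gap is in your implementation of (3): you assert that for an \emph{arbitrary} morphism $f$ the kernel in $PQCoh(X)$ is the stabilization $(\bar K_i)_i$, $\bar K_i=\bigcap_{j\ge i}\mathrm{im}(K_j\to K_i)$, of the levelwise kernels. The paper establishes this only when $(K_i)$ satisfies Mittag-Leffler, and it fails in general because the transition maps $\bar K_{i+1}\to\bar K_i$ need not be surjective, so $(\bar K_i)$ need not even be an object of $PQCoh(X)$. Concretely, over a field $k$ take $V_0=ke_0$ and $V_m=ky\oplus\bigoplus_{i\ge 1}ku_i$ for $m\ge 1$, with $V_1\to V_0$ sending $y\mapsto e_0$, $u_i\mapsto 0$ and identity transitions above; let $I_m=\mathrm{span}(y+u_m,\ u_i-u_{i+1}\ (i\ge m))$, a decreasing chain, and set $W_0=W_1=0$, $W_m=V_m/I_m$ for $m\ge 2$, with $f$ the quotient maps. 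Then $\bar K_0=ke_0$ (each $I_m$ contains $y+u_m$, which hits $e_0$) while $\bar K_1=\bigcap_m I_m=0$, so $(\bar K_i)$ is not in $PQCoh(X)$; the true kernel of $f$ is $0$. The same objection applies to your description of $\ker(A\otimes f)$ as a single stabilization. The repair is within reach of your own tools --- the kernel is the largest $PQCoh(X)$-subobject of $V$ contained levelwise in the $K_i$, obtained by iterating the stabilization, and since $A_i\otimes(-)$ preserves arbitrary intersections and images it commutes with every stage of that iteration --- or, more cleanly, one follows the paper's route through $\nlim$. But as written, the identification $\ker f=(\bar K_i)_i$ is a genuine error rather than bookkeeping, and it is the step on which your proof of (3) rests.
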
%\begin{proof}Without loss of generality, $0\to U\to V\to W\to 0$ is a surjective inverse system of short exact sequences and $A$ is an inverse system. We therefore get the following inverse system of exact sequences:\begin{align}(L^1(A_i\otimes W_i)\xrightarrow{\delta_i} A_i\otimes U_i\to A_i\otimes V_i\to A_i\otimes W_i\to 0)_{i\in\mathbb{N}};\end{align}\noindent note that this is not necessarily a surjective inverse system: the $L^1$ term alone may fail to be surjective. If $A$ or $W$ is flat, the above presentation may be chosen so that each $A_i$ or each $W_i$ is flat, whence point $3$. In any case, we have the surjective inverse system of short exact sequences\begin{align}(0\to (A_i\otimes U_i)/\delta_i(L^1(A_i\otimes W_i))\to A_i\otimes V_i\to A_i\otimes W_i\to 0)_{i\in\mathbb{N}},\end{align}whence point $1.$ Note it follows that if $(L^1(A_i\otimes W_i))$ is a surjective system, then $\nlim(A\otimes U)\to\nlim(A\otimes V)\to\nlim(A\otimes W)\to 0$ is exact.\end{proof}

\subsection{}An affine ind-scheme over $X$ is by definition a (surjective, countable) cofiltered system of $\mathcal{O}(X)$-algebras\footnote{This definition is more restrictive than being simply a ring object in $PQCoh(X)$.}. Affine ind-schemes form a category, $IAff_X$, where by definition a morphism in $IAff_X^{op}$, between $(A_i)$ and $(B_j)$ say, is any morphism $(S,f)$ in $PQCoh(X)$ for which every extant $f_i^j$ is a ring map\footnote{Equivalently, noting that the limit of an affine ind-scheme is naturally a ring, we see that morphisms between affine ind-schemes are exactly those morphisms in $PQCoh(X)$ whose limit is a ring homomorphism. This shows, for instance, that the forgetful functor $IAff_X^{op}\to PQCoh(X)$ reflects isomorphisms. To present a projectively discrete topological ring as the limit of an object of $IAff_X$ is to give a countable cofinal subsystem of ideals in its lattice of open submodules.}. An affine ind-scheme is called flat, coherent etc. if it is so as an object of $PQCoh(X)$ \footnote{I do not know (nor care) whether a flat affine ind-scheme may be presented as a surjective cofiltered system of flat $\cO(X)$-algebras.}. $IAff_X$ inherits the monoidal structure from $PQCoh(X)$. A groupoid ind-scheme is a groupoid in $IAff$ \footnote{$IAff$ is meant as a stack over $Aff$; this is just a convenient way of saying that a groupoid ind-scheme is an object of $IAff_{X\times X}$ for some $X$ with the appropriate operations between its two projections to $IAff_X$.}. As usual, for such a thing $\mathcal{G}$, we will write $\mathcal{O}(\mathcal{G})$ for the corresponding object of $IAff_X^{op}$, and keep the notations $\Delta, \eta, t^\#, s^\#$ of the previous section. Since $\eta, s^\#$ split each other, they are respectively admissible epi, mono. Similarly $t^\#$ and $\Delta$ are admissible mono. A (right) representation of the groupoid ind-scheme $\mathcal{G}$ is a quasi-coherent sheaf $V$ on $X$ together with a morphism \begin{align}m:V\to V\otimes\mathcal{O}(\mathcal{G})\end{align}satisfying the natural comodule axioms. Representations of $\mathcal{G}$ form a category, denoted $Rep_X(\mathcal{G})$, in the obvious manner. Of course, one could make the same definition with $V$ being an arbitrary object of $PQCoh(X)$; however, this would apparently make it difficult for $Rep _X(\mathcal{G})$ to be abelian.

\subsection{Coinduction.}We assume that $t$ is finite flat. In that case, the forgetful functor $Rep_X(\mathcal{G})\to QCoh(X)$ has the all-important right adjoint\begin{align}\begin{matrix}Coind_X^\mathcal{G}:&QCoh(X)&\to& Rep_X(\mathcal{G})\\
&V&\mapsto& \nlim(V\otimes\mathcal{O}(\mathcal{G}))\end{matrix}\end{align}where the structure of representation on $\nlim(V\otimes\mathcal{O}(\mathcal{G}))$ is given as follows. Certainly there is a map $V\otimes\mathcal{O}(\mathcal{G})\xrightarrow{id\otimes\Delta}V\otimes\mathcal{O}(\mathcal{G})\otimes\mathcal{O}(\mathcal{G})$; taking $\nlim$ we get $\nlim(V\otimes\mathcal{O}(\mathcal{G}))\xrightarrow{}\nlim(V\otimes\mathcal{O}(\mathcal{G})\otimes\mathcal{O}(\mathcal{G}))=\nlim(\nlim(V\otimes\mathcal{O}(\mathcal{G}))\otimes\mathcal{O}(\mathcal{G}))$ since $\mathcal{O}(\mathcal{G})$ is flat coherent with respect to $t^\#$. By the adjunction property of $\nlim$, this is the same as a map $\nlim(V\otimes\mathcal{O}(\mathcal{G}))\to \nlim(V\otimes\mathcal{O}(\mathcal{G}))\otimes\mathcal{O}(\mathcal{G})$. Having constructed the map, it is easy to see that it satisfies the comodule axioms, and that $Coind_X^\mathcal{G}$ is indeed right adjoint to the forgetful functor.

\subsection{Abelian-ness.}It follows from the flatness of $t$ that $Rep_X^\mathcal{G}$ is abelian. The proof is more or less the same as in the scheme case, but we'll indicate it anyway. Suppose $U\to V\to W\to 0$ is an exact sequence in $QCoh(X)$. If $U\to V$ lifts to $Rep_X(\mathcal{G})$, then $V\to V\otimes\mathcal{O}(\mathcal{G})\to W\otimes\mathcal{O}(\mathcal{G})$ factors through $W$, and one checks that this defines a comodule structure on $W$ (independently of the assumption on $t$), and that $V\to W$ is a map of comodules, and that this is the unique way to lift $V\to W$ to $Rep_X^\mathcal{G}$. If instead $U\to V$ is injective and $V\to W$ lifts to $Rep_X(\mathcal{G})$, then consider the diagram \begin{align}\begin{matrix}0&&0&&0\\
\downarrow&&\downarrow&&\downarrow\\
U &\dashrightarrow & U\otimes\mathcal{O}(\mathcal{G}) & \xrightarrow{\dashrightarrow}& U\otimes\mathcal{O}(\mathcal{G})\otimes\mathcal{O}(\mathcal{G})\\
\downarrow&&\downarrow&&\downarrow\\
V &\xrightarrow{~~~~} & V\otimes\mathcal{O}(\mathcal{G}) & \xrightarrow{\xrightarrow{~~~}}& V\otimes\mathcal{O}(\mathcal{G})\otimes\mathcal{O}(\mathcal{G})\\
\downarrow&&\downarrow&&\downarrow\\
W &\xrightarrow{~~~~} & W\otimes\mathcal{O}(\mathcal{G}) & \xrightarrow{\xrightarrow{~~~}}& W\otimes\mathcal{O}(\mathcal{G})\otimes\mathcal{O}(\mathcal{G})\\
\downarrow&&\downarrow&&\downarrow\\
0&&0&&0\end{matrix}\end{align}By the flatness of $t$, each column is Mittag-Leffler. Therefore in the limit, the columns become short exact sequences. The leftmost dashed arrow exists in the limit, and hence exists outright by adjunction ($U$ being already quasi-coherent). Hence both dashed arrows exist outright. All necessary commutativity/equalizing properties follow from the corresponding limiting statements, since $\nlim$ is faithful. A similar diagram yields the counity condition, and thus one obtains the (unique) lifting of $U\to V$ to $Rep_X(\mathcal{G})$. From this it is a formal consequence that $Rep_X^\mathcal{G}$ is abelian, and that the forgetful functor to $QCoh(X)$ is exact and faithful.

\subsection{Descent.}By Barr-Beck, one obtains that $Rep_X(\mathcal{G})$ is equivalent to the category of comodules in $QCoh(X)$ for the comonad $\nlim((-)\otimes\mathcal{O}(\mathcal{G}))$. In order for Lemma \ref{lemma1} to go through, one must make the additional assumption that $f:Y\to X$ is finite (as well as faithfully flat), so that the comonads $f^*\circ Res^\mathcal{G}_X\circ Coind^\mathcal{G}_X\circ f_*$ and $Res_Y^{Y\times\mathcal{G}\times Y}\circ Coind_Y^{Y\times\mathcal{G}\times Y}$ on $QCoh(Y)$ coincide.

\subsection{}The remaining arguments of the previous section apply more or less verbatim. We will point out (in chronological order) the points where some extra thought is needed:\begin{enumerate}\item $C$ is defined the same way as before (it is a scheme).\
\item $\mathcal{O}(\mathcal{T})$ is defined as the subsystem of $\mathcal{O}(\mathcal{G})$ generated by the images of $s^\#$, $t^\#$, which is automatically surjective; it is left as an exercise that $\mathcal{T}$ inherits the groupoid structure from $\mathcal{G}$. From the point of view of exact categories, $\mathcal{O}(\mathcal{T})$ is the kernel of the cokernel morphism $\mathcal{O}(\mathcal{G})\to coker(\mathcal{O}(X\times X)\to\mathcal{O}(\mathcal{G}))$; since cokernel maps are admissible, we get that $\mathcal{O}(\mathcal{T})\to\mathcal{O}(\mathcal{G})$ is admissible.\
\item $\mathcal{I}$ is defined as $\mathcal{I}=\mathcal{G}\times_{X\times X}X$; on level of surjective systems, it is the system obtained by quotienting out each piece of $\mathcal{O}(\mathcal{G})$ by the ideal generated by the image of $(s^\#-t^\#)\mathcal{O}(X)$. Those ideals also form a surjective system, denoted $M$, and so we have the Mittag-Leffler sequence $M\to\mathcal{O}(\mathcal{G})\to\mathcal{O}(\mathcal{I})$.\
\item Hypothesis \ref{hyp} needs changing slightly. First we replace `flat' by `finite flat'. Next note that the equalizer of $\mathcal{O}(\mathcal{G})\rightrightarrows\mathcal{O}(\mathcal{G})\otimes\mathcal{O}(\mathcal{I})$ is equal to the kernel of the composition $\mathcal{O}(\mathcal{G})\xrightarrow{\Delta-id\otimes1}\mathcal{O}(\mathcal{G})\otimes\mathcal{O}(\mathcal{G})\to\mathcal{O}(\mathcal{G})\otimes\mathcal{O}(\mathcal{I})$ and is an ind-scheme (the forgetful functor $IAff_X^{op}\to PQCoh(X)$ has and preserves equalizers of equalizers it creates). Moreover since $M\to\mathcal{O}(\mathcal{G})\to\mathcal{O}(\mathcal{I})$ is Mittag-Leffler, so is $\mathcal{O}(\mathcal{G})\otimes M\to\mathcal{O}(\mathcal{G})\otimes\mathcal{O}(\mathcal{G})\to\mathcal{O}(\mathcal{G})\otimes\mathcal{O}(\mathcal{I})$, and it follows that the equalizer in question is the Cartesian product of \begin{align}\label{carty}\begin{matrix}&&\mathcal{O}(\mathcal{G})\\
&&~~~~~~~~~~~\downarrow{_{\Delta-id\otimes1}}\\
\mathcal{O}(\mathcal{G})\otimes M&\to&\mathcal{O}(\mathcal{G})\otimes\mathcal{O}(\mathcal{G}).\end{matrix}\end{align}\noindent Therefore the condition that $\mathcal{G}\times_X\mathcal{I}\rightrightarrows\mathcal{G}\to\mathcal{T}$ is a coequalizer in $IAff_X$ is equivalent to the condition that $\mathcal{O}(\mathcal{T})$ maps isomorphically to the Cartesian product of that diagram in $PQCoh(X)$, or equivalently that it maps isomorphically to the kernel of $\mathcal{O}(\mathcal{G})\to\mathcal{O}(\mathcal{G})\otimes\mathcal{O}(\mathcal{I})$ in $PQCoh(X)$. We require the additional hypothesis that this map is \textbf{admissible} (recall that $\mathcal{O}(\mathcal{T})\to\mathcal{O}(\mathcal{G})$ is admissible). This hypothesis is equivalent to that in any presentation of diagram \ref{carty} as a surjective inverse systems of diagrams the pointwise pullback satisfies the Mittag-Leffler condition and $\mathcal{O}(\mathcal{T})$ maps isomorphically to its stabilization.\
\item Lemma \ref{lemma2} goes through as written if one understands `injective' as mono, and recalling that $\mathcal{O}(\mathcal{T})\to\mathcal{O}(\mathcal{G})$ is the kernel of the admissible morphism $\mathcal{O}(\mathcal{G})\xrightarrow{\Delta-id\otimes1}\mathcal{O}(\mathcal{G})\otimes\mathcal{O}(\mathcal{I})$ (and these properties are preserved when tensoring with flat objects).\
\item Hypothesis \ref{hyp2} has already been made. The construction of comodule structure on $Hom_{\mathcal{O}(X)}(V,V')$ is the same, noting that $Hom_{PQCoh(X)}(V,V'\otimes \mathcal{O}(\mathcal{T}))=\nlim Hom_{QCoh(X)}(V,V'\otimes\mathcal{O}(\mathcal{T})_i)$ (choosing a presentation of $\mathcal{O}(\mathcal{T})$), which equals $\nlim (Hom_{QCoh(X)}(V,V')\otimes\mathcal{O}(\mathcal{T})_i)$, and thus $Hom_{PQCoh(X)}(V,V'\otimes \mathcal{O}(\mathcal{T}))$ is `internalized' as $Hom_{Qcoh(X)}(V,V')\otimes \mathcal{O}(\mathcal{T})$. The waffle about high roads and low roads works out the same and one obtains Equation \ref{eq9}, whose RHS is interpreted in exact categories-speak as the kernel of $Hom_{QCoh(X)}(V,V')\xrightarrow{m-id\otimes1}Hom_{QCoh(X)}(V,V')\otimes\mathcal{O}(\mathcal{T})$. Since $m,id\otimes 1$ have the common section $id\otimes\eta$, we also get Equation \ref{eq10} (interpreted as a pullback in the obvious way). Thus again we will deduce fullness from the `reflects invariants' property.\
\item We make Hypothesis \ref{hyp3} and define $\alpha, \beta$ as before. It is still true that $\alpha$ is a section of the inclusion $V^{\mathcal{T}}\to V$, but one must be a little more careful. Indeed since the map $\mathcal{O}(X\times X)\to\mathcal{O}(\mathcal{T})$ is epi, the map $(id\otimes \beta)\circ(id\otimes f)\circ\Delta:\mathcal{O}(\mathcal{T})\to\mathcal{O}(\mathcal{T})$ factors through the image of $t^\#\otimes(s^\#\circ\beta\circ s^\#):\mathcal{O}(X\times X)\to\mathcal{O}(\mathcal{T})$. This latter map equals $t^\#\otimes(t^\#\circ\beta\circ s^\#)$ which of course factors through $t^\#:\mathcal{O}(X)\to\mathcal{O}(\mathcal{T})$. This last map is an isomorphism with its image, so we see that $(id\otimes \beta)\circ(id\otimes f)\circ\Delta$ factors through $t^\#$. Likewise the map $(id\otimes id\otimes\beta)\circ(id\otimes id\otimes f)\circ(id\otimes\Delta):V\otimes\mathcal{O}(\mathcal{T})\to V\otimes\mathcal{O}(\mathcal{T})$ factors through $id\otimes 1:V\to V\otimes\mathcal{O}(\mathcal{T})$ as required.\
\item We make the same alternative hypothesis as Hypothesis \ref{hyp4}, and make the same conclusion that if $v\in V^{\mathcal{G}}$ then $m(v)=v\otimes1$ in $V\otimes\mathcal{O}(\mathcal{T}_{(x,x)})$ for all closed points $x\in X$. Of course it does not necessarily follow that we have the equality in $V\otimes\mathcal{O}(\mathcal{T}_U)$ for some Zariski-open neighborhood $U$ of the diagonal $X\subset X\times X$, but it does hold for $U$ being the `complement' of some closed ind-subscheme of $X\times X$, which suffices.\
\item From Hypothesis \ref{hyp5} onwards the argument is identical, reading `sub-ind-scheme' for subscheme and `mono` for injective.\end{enumerate}

In summary, we have the following:

\begin{prop}\label{bigprop}Let $\cG$ be an affine groupoid ind-scheme over the affine base scheme $X$, with adjacency groupoid $\cT$ and isotropy subgroup $\cI$. 
\begin{enumerate}\item Suppose the following conditions hold:
\begin{enumerate}\item Both $\cG$, $\cT$ are finite flat over $X$ with respect to both the head and tails maps;
\item $\cO(\cG)\to\cO(\cG)\otimes\cO(\cI)$ is admissible and $\cO(\cT)$ is its kernel;
\item For every closed point $x$ of $X$, there exists a $(t^\#)$ $\mathcal{O}(X)$-linear section of $t^\#_x:\mathcal{O}(X_x)\to\mathcal{O}(\mathcal{G}_{(x,X)})$  (denoted by $\beta_{x}$) such that $s^\#_x\circ\beta_{x}\circ s^\#$ and $t^\#_x\circ\beta_{x}\circ s^\#$ coincide in $\mathcal{O}(\mathcal{G}_{(x,x)})$ (equivalently, in $\mathcal{O}(\mathcal{T}_{(x,x)})$).\end{enumerate}
Then the functor of restriction $Rep_X(\cT)\to Rep_X(\cG)$ reflects invariants in some neighborhood $U$ of the diagonal $X\subset X^2$.\
\item Suppose in addition that there exist closed sub-ind-schemes $\mathcal{R}_i$ of $\mathcal{T}$ such that:\begin{enumerate}\item The map $\mathcal{G}\times_\mathcal{T}\mathcal{R}_i\to\mathcal{R}_i$ induces a universally injective of $\cO(X)$-modules for each $i$, and\
\item $\mathcal{T}_U$ and the various $\mathcal{R}_i$ generate $\mathcal{T}$.\end{enumerate}
Then $Rep_X(\cT)\to Rep_X(\cG)$ is full.\
\item The functor $Rep_X(\cT)\to Rep_X(\cG)^{\cI}$ is an equivalence on the full subcategories of objects which are flat over $X$. Noting that $\cO(X)$-flat (even projective) resolutions exist in $Rep_X(\cT)$, one obtains the equivalence
$$
Rep_X(\cT)\cong Rep_X(\cG)^{``\cI"}
$$where the latter category denotes the full subcategory of $Rep_X(\cG)$ consisting of those objects which admit resolutions by $\cO(X)$-flat objects which have trivial isotropy.\end{enumerate}\end{prop}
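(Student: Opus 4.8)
This proposition is a consolidation of the scheme-case results of Section~3 as transported to the ind-scheme setting by the itemized modifications above, so the plan is to assemble the three assertions from those ingredients rather than to establish anything new. Throughout, write $\otimes$ for $\otimes_{\cO(X)}$, and note that condition (a) supplies finite flatness (Hypothesis~\ref{hyp}(1) in its `finite flat' form, and Hypothesis~\ref{hyp2}), condition (b) is the `finite flat' version of Hypothesis~\ref{hyp}(2) together with the admissibility flagged in item~(4) of the list, and condition (c) is Hypothesis~\ref{hyp4}. For part (1) I would fix $V\in Rep_X(\cG)$ and $v\in V^{\cG}$ and run the localization argument attached to Hypothesis~\ref{hyp4}: condition (c) provides, for each closed point $x$ of $X$, a $(t^\#)$-linear section $\beta_x$ and hence a map $\alpha_x:V\to V_x$ agreeing with localization on $V^{\cG}$, and the diagram chase of that paragraph --- using condition (b) to identify $\cO(\cT)$ with the kernel of the admissible morphism $\cO(\cG)\xrightarrow{\Delta-id\otimes 1}\cO(\cG)\otimes\cO(\cI)$, a property preserved after tensoring with the flat sheaf $V$, so that $m(v)$ genuinely lies in $V\otimes\cO(\cT)$ --- yields $m(v)=v\otimes 1$ in $V\otimes\cO(\cT_{(x,x)})$ for every such $x$. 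The locus where $m(v)$ and $v\otimes 1$ disagree is a closed ind-subscheme of $X\times X$ meeting no point of the diagonal, so over its complement $U\supset X$ one has $m(v)=v\otimes 1$; thus $F$ reflects invariants in $U$, with the caveat (item~(8)) that $U$ is only the complement of a closed ind-subscheme, not necessarily Zariski-open.

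For part (2), the sub-ind-schemes $\mathcal{R}_i$ enter exactly as in the treatment of Hypothesis~\ref{hyp5}: condition (i) makes $V\otimes\cO(\mathcal{R}_i)\to V\otimes\cO(\cG\times_\cT\mathcal{R}_i)$ a monomorphism, so for $v\in V^{\cG}$ the images of $m(v)$ and $v\otimes 1$ agree in each $V\otimes\cO(\mathcal{R}_i)$; together with their agreement over $\cT_U$ from part (1) and condition (ii) --- joint universal injectivity of the iterated multiplication maps from products of copies of $\cT_U$ and the $\mathcal{R}_i$ into $\cT$ --- the coassociativity chase forces $m(v)=v\otimes 1$ in $V\otimes\cO(\cT)$, i.e.\ $v\in V^{\cT}$. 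So $F$ reflects invariants globally. Since $\cG,\cT$ are finite over $X$ (condition (a)), the internal hom $Hom_{\cO(X)}(V,V')$ carries compatible $\cT$- and $\cG$-representation structures with $Hom_\cT(V,V')=Hom_{\cO(X)}(V,V')^{\cT}$ and likewise for $\cG$ (Equations~\ref{eq9}, \ref{eq10}); applying the reflection-of-invariants property to $Hom_{\cO(X)}(V,V')$ gives $Hom_\cT(V,V')=Hom_\cG(V,V')$, which is the fullness of $F$.

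For part (3), the first assertion is Lemma~\ref{lemma2}, whose proof survives verbatim in the ind-scheme setting once `injective' is read as `mono' and one recalls (item~(5)) that $\cO(\cT)\to\cO(\cG)$ is the kernel of the admissible morphism above, stable under tensoring with flats: essential surjectivity is the equalizer computation identifying the equalizer of $V\otimes\cO(\cG)\rightrightarrows V\otimes\cO(\cG)\otimes\cO(\cI)$ with $V\otimes\cO(\cT)$ when $V$ is flat, faithfulness is automatic, and fullness on flats is the closing remark of that lemma (which in fact gives $F\colon Hom(V,V')\xrightarrow{\sim}Hom(FV,FV')$ whenever $V'$ is $\cO(X)$-flat). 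To bootstrap to all objects, I would use the existence of $\cO(X)$-flat resolutions in $Rep_X(\cT)$ noted in the statement --- these come from the description of $Rep_X(\cT)$ as modules over the $\cO(X)$-algebra dual to $\cO(\cT)$, whose modules induced from flat $\cO(X)$-modules are $\cO(X)$-flat and admit surjections onto any given object. First observe that $F$ factors through $Rep_X(\cG)^{\cI}$, since the composite $\cO(\cT)\to\cO(\cG)\to\cO(\cI)$ goes through the identity section $\cO(\cT)\to\cO(X)$; hence for a flat resolution $V_\bullet\to V$ in $Rep_X(\cT)$ the exact functor $F$ produces a resolution $F(V_\bullet)\to F(V)$ by $\cO(X)$-flat objects of $Rep_X(\cG)$ with trivial isotropy, so $F(V)\in Rep_X(\cG)^{``\cI"}$. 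Conversely, an object $W\in Rep_X(\cG)^{``\cI"}$ comes with a resolution $W_\bullet\to W$ by flat trivial-isotropy objects; by essential surjectivity and full faithfulness of $F$ on flats this lifts to a complex $\tilde W_\bullet$ in $Rep_X(\cT)$ with $F(\tilde W_\bullet)=W_\bullet$, and since $F$ is exact and faithful (killing no nonzero object) $\tilde W_\bullet$ is a resolution of $\tilde W:=H_0(\tilde W_\bullet)$ with $F(\tilde W)\cong W$. A standard comparison-of-resolutions argument turns $W\mapsto\tilde W$ into a functor quasi-inverse to $F$, which therefore realizes the equivalence $Rep_X(\cT)\cong Rep_X(\cG)^{``\cI"}$.

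The step I expect to be the real obstacle is the ind-scheme bookkeeping underlying parts (1)--(2): one must verify that `kernel of an admissible morphism' and the various joint-monomorphism conditions of (b) and of Hypothesis~\ref{hyp5} are genuinely preserved by $V\otimes(-)$ for $V$ merely flat (not flat coherent), so that $m(v)$ can be asserted to land in $V\otimes\cO(\cT)$ and the gluing locus is an honest closed ind-subscheme --- this is exactly where the exact-category formalism on $PQCoh(X)$ and the Mittag-Leffler content of the admissibility hypotheses are indispensable, and it is the only part not a formal transcription of the scheme case. The resolution bootstrap in part (3) is comparatively soft, the only genuine input being that $Rep_X(\cT)$ has enough $\cO(X)$-flat objects.
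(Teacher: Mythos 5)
Your proposal follows the paper's own route---Proposition \ref{bigprop} is assembled from Lemma \ref{lemma2}, the reflection-of-invariants arguments under Hypotheses \ref{hyp4} and \ref{hyp5}, and the internal-Hom reduction of fullness to reflection of invariants, all transported to $PQCoh(X)$ as in the itemized list of Section 4---and your parts (2) and (3), including the resolution bootstrap that the paper leaves implicit, are sound. However, part (1) as written contains a genuine misstep. You fix ``$V\in Rep_X(\cG)$'' and invoke flatness of $V$, via condition (b), to conclude that ``$m(v)$ genuinely lies in $V\otimes\cO(\cT)$.'' That factorization is the essential-surjectivity half of Lemma \ref{lemma2}, not part of the reflection-of-invariants argument. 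The claim to be proved is that the functor $Rep_X(\cT)\to Rep_X(\cG)$ reflects invariants, so the input is an object $V$ of $Rep_X(\cT)$: the comultiplication $m\colon V\to V\otimes\cO(\cT)$ is part of the given data, and no flatness is needed to place $m(v)$ there. What the $\alpha_x$ computation actually uses is the definitional fact that $\cO(\cT)$ is generated by the images of $s^\#$ and $t^\#$ (so that $(id\otimes\Delta)\circ m$ lands in $V\otimes 1\otimes s^\#(\cO(X))$), not condition (b), which does its work in Lemma \ref{lemma2} and hence in part (3).

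This is not merely a labelling slip, because of how part (2) consumes part (1): fullness is reduced to reflection of invariants for the objects $Hom_{\cO(X)}(V,V')$ of $Rep_X(\cT)$, and these are in general not flat over $X$. If your version of part (1) really required $V$ to be flat, part (2) would deliver fullness only on pairs whose internal Hom happens to be flat, which is not the assertion. The repair is immediate---take $V\in Rep_X(\cT)$ arbitrary and delete the flatness appeal---but as the proposal stands the logical chain from (1) to (2) is broken. Everything else matches the paper's argument.
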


\begin{remark}We think of the above isotropy condition as `having trivial derived isotropy'. We expect that the better way to phrase it is to replace $\cI$ by the natural groupoid ind-dga-scheme (whatever that means!), denoted $``\cI"$, at which point the condition may be literally interpreted as having trivial $``\cI"$-action. We do not pursue this here.\end{remark}

\section{Proof of Theorems \ref{thm1} and \ref{thm2?}}\label{checktion}

For background material on root systems and reflection groups, as used heavily in paragraph \ref{fpd}, see \cite{humph}.

\subsection{Adjoint case.}\label{adj}Let us begins with the case where $G$ is adjoint. Then $\pi_1(G^\vee)$ is trivial, $\widetilde{W}^{aff}=W^{aff}$ and $\cT=\spec(H^\bullet_{G^\vee(\cO)\rtimes \bG_m}(Fl))$ is really the adjacency groupoid of $\cG=\widetilde{W}^{aff}\#\liet^*$. We simply check the conditions of Proposition \ref{bigprop}.

\begin{enumerate}\item Certainly $s$, $t$ are finite flat (for $\mathcal{T}$, $\mathcal{G}$).\
\item $\mathcal{O}(\mathcal{T})\to\mathcal{O}(\mathcal{G})$ is the kernel of the admissible morphism $\mathcal{O}(\mathcal{G})\xrightarrow{\Delta-id\otimes 1}\mathcal{O}(\mathcal{G})\otimes\mathcal{O}(\mathcal{I})$. Indeed by \cite{GKM}, Theorem 1.2.2 we have that for an equivariantly formal $T$-variety $X$ with finitely many fixed points,
\begin{align}\label{locali} H^\bullet_T(X)=ker(H^\bullet_T(X^T)\to \prod_{j}H^\bullet_T(X_j)),\end{align}
where the $X_j$ are the one-dimensional orbits of $T$ on $X$ and the $j$-component of the map is $(\xi_x)_{x\in X^T}\mapsto \xi_{j_0}-\xi_{j_\infty}$ (on ${\lie(\stab_T(X_j))}$). In our situation, the one-dimensional $T^\vee\times\mathbb{G}_m$-orbits on $\overline{Fl_\lambda}$ correspond (up to equivalence, i.e. repetition in the above morphism) to pairs $(s,\gamma)$, where $s$ is an affine reflection and $\gamma$ is any fixed point in $\overline{Fl_\lambda}$ such that $\gamma s$ is also in $\overline{Fl_\lambda}$. Since $T^\vee$ has the same fixed point set in $\overline{Fl_\lambda}$ as $T^\vee\times\bG_m$, each $H^\bullet_{T^\vee\times\bG_m}(\overline{Fl_\lambda})$ is free over $\bA^1$. Since $\bA^1$ has homological dimension $1$, one may set $\hbar=1$ in Equation \ref{locali} (and obtain a correct formula). Then $H^\bullet_{T^\vee\times\mathbb{G}_m}(\overline{Fl_\lambda}_{(s,\gamma)})_{\hbar=1}=\mathcal{O}(\Gamma_\gamma|_{{(\liet^*)}^s})$. Writing $\mathcal{O}(\mathcal{G})_\lambda:=  H^\bullet_{T^\vee\times\mathbb{G}_m}(\overline{Fl_\lambda}^{T^\vee})_{\hbar=1}$, $\mathcal{O}(\mathcal{T})_\lambda:=  H^\bullet_{T^\vee\times\mathbb{G}_m}(\overline{Fl_\lambda})_{\hbar=1}$, we have\begin{align}\begin{matrix} \mathcal{O}(\mathcal{G})_\lambda& = & \prod_{\gamma \in \overline{Fl_\lambda}^{T^\vee}}\mathcal{O}(\Gamma_\gamma)\\
\mathcal{O}(\mathcal{I})_\lambda& = & \prod_{\gamma \in \overline{Fl_\lambda}^{T^\vee}}\mathcal{O}(\Gamma_\gamma|_{{(\liet^*)}^\gamma})\\
\mathcal{O}(\mathcal{G})_\lambda\otimes \mathcal{O}(\mathcal{I})_\mu &=& \prod_{(\gamma_0,\gamma_\infty)\in \overline{Fl_\lambda}^{T^\vee}\times \overline{Fl_\mu}^{T^\vee}}\mathcal{O}(\Gamma_{\gamma_0}|_{{(\liet^*)}^{\gamma_\infty}}).\end{matrix}\end{align}In the limit, we have $\nlim\mathcal{O}(\mathcal{G})=\prod_{\gamma\in Fl^{T^\vee}}\mathcal{O}(\Gamma_\gamma)$ and $\nlim(\mathcal{O}(\mathcal{G})\otimes\mathcal{O}(\mathcal{I}))=\prod_{\gamma_0,\gamma_\infty\in Fl^{T^\vee}}\mathcal{O}(\Gamma_{\gamma_0}|_{{(\liet^*)}^{\gamma_\infty}})$. Both are equipped with the product topology, and the morphism $\Delta-id\otimes 1$ between them sends $(\xi_\gamma)_\gamma$ to $(\xi_{\gamma_0.\gamma_\infty}|_{(\liet^*)^{\gamma_\infty}}-\xi_{\gamma_0}|_{(\liet^*)^{\gamma_\infty}})_{(\gamma_0,\gamma_\infty)}$. For each $\lambda$, the set $S_\lambda$ of pairs $\gamma_0,\gamma_\infty$ such that both $\gamma_0$ and $\gamma_0.\gamma_\infty$ are contained in $\overline{Fl^\lambda}$ is finite, and so one obtains the discrete quotient $\prod_{(\gamma_0,\gamma_\infty)\in S_\lambda}\mathcal{O}(\Gamma_{\gamma_0}|_{{(\liet^*)}^{\gamma_\infty}})$ of $\nlim(\mathcal{O}(\mathcal{G})\otimes\mathcal{O}(\mathcal{I}))$. These quotients are cofinal in the cofiltered system of all discrete quotients, and thus may be used as a presentation. But then composition of $\Delta-id\otimes1$ with projection to the $\lambda$-piece of this presentation factors through $\mathcal{O}(\mathcal{G})_\lambda$, and yields the the map\begin{align}\label{huh}\begin{matrix}\prod_{\gamma \in \overline{Fl_\lambda}^{T^\vee}}\mathcal{O}(\Gamma_\gamma) & \to & \prod_{(\gamma_0,\gamma_\infty)\in S_\lambda}\mathcal{O}(\Gamma_{\gamma_0}|_{{(\liet^*)}^{\gamma_\infty}})\\
(\xi_\gamma)_\gamma&\mapsto &(\xi_{\gamma_0.\gamma_\infty}|_{(\liet^*)^{\gamma_\infty}}-\xi_{\gamma_0}|_{(\liet^*)^{\gamma_\infty}})_{(\gamma_0,\gamma_\infty)}\end{matrix}\end{align}If one projects the right-hand side to the product of all those factors for which $\gamma_\infty$ is an affine reflection, then one obtains the map whose kernel is $\mathcal{O}(\mathcal{T})_\lambda$, according to Equation \ref{locali}. Therefore $\mathcal{O}(\mathcal{T})_\lambda$ is also the kernel of Equation \ref{huh}, which is to say that $\mathcal{O}(\mathcal{T})$ is the kernel of the admissible morphism $\mathcal{O}(\mathcal{G})\xrightarrow{\Delta-id\otimes 1}\mathcal{O}(\mathcal{G})\otimes\mathcal{O}(\mathcal{I})$, as required.\
\item Hypothesis \ref{hyp4} holds. Indeed, note that for any closed point $x\in\liet^*$, the stabilizer $\widetilde{W}^{aff}_x$ is finite, and so one may define a map \begin{align}\begin{matrix}\beta_x:&\mathcal{O}(\mathcal{G}_{(x,X)})=\prod_{\gamma\in \widetilde{W}^{aff}}\mathcal{O}({\Gamma_\gamma}_{(x,X)})& \to & \mathcal{O}(\liet^*_x)\\
&~~~~~~~~~~~~~~~~~~~~~~~~~~~~~~~~~~~~~~~~~~~~~~~~~~~~~~(\xi_\gamma)_\gamma & \mapsto & \frac{1}{|\widetilde{W}^{aff}_x|}\sum_{\gamma\in \widetilde{W}^{aff}_x}{\pi_1}_*(\xi_\gamma).\end{matrix}\end{align}$\beta_x$ is a $t^\#$-linear section of $t^\#_x$. The map $\beta_x\circ s^\#:\mathcal{O(\liet^*)}\to\mathcal{O}(\liet^*_x)$ is the composition of the averaging map with respect to $\widetilde{W}^{aff}_x$ with the localization map. Therefore $s_x^\#\circ\beta_x\circ s^\#$ and $t_x^\#\circ\beta_x\circ s^\#$ coincide on every factor of the form $\mathcal{O}(\Gamma_\gamma)$ with $\gamma\in \widetilde{W}^{aff}_x$. Noting that these are the only factors which survive in $\mathcal{O}(\mathcal{G}_{(x,x)})$, we get the desired result\footnote{With a little more care, we may take $U\subset \liet^*\times\liet^*$ to be the complement of the (ind-scheme) union of the graphs of the fixed-point-free elements of $\widetilde{W}^{aff}$. This is not necessary for what follows.}.\
\item Hypothesis \ref{hyp5} holds taking the closed sub-ind-schemes $\mathcal{R}_i$ of $\mathcal{T}$ to be the graphs $\Gamma_\gamma$ ($\gamma$ ranging over all of $\widetilde{W}^{aff}$). Of course $\Gamma_\gamma$ is most naturally a closed sub-ind-scheme of $\mathcal{G}$; it is also a closed subscheme of $\mathcal{T}$, because it is a closed subscheme of $\liet^*\times\liet^*$. Then the projection $\mathcal{G}\times_\mathcal{T}\Gamma_\gamma\to \Gamma_\gamma$ is an isomorphism, so certainly induces a universally injective map of $\cO(X)$-modules. Finally, the various multiplications $\Gamma_{\gamma}\times_X\mathcal{T}_U\to\mathcal{T}$ constitute an open cover of $\mathcal{T}$, so certainly induce jointly universally injective morphisms of $\cO(X)$-modules.\end{enumerate}

\subsection{General case.}For general reductive $G$, we write $\cT^{ad}$, $\cG^{ad}$ for the groupoids associated as in the previous paragraph to its adjoint group. $\widetilde{W}^{aff}$ acts (by `conjugation') on both $\cT^{ad}$, $\cG^{ad}$ by groupoid automorphisms covering its natural action on $\liet^*$ (for both the heads and the tails map), and the map $\cG^{ad}\to\cT^{ad}$ is $\widetilde{W}^{aff}$-equivariant.  From Lemma \ref{imaget} it follows that $\cT$, $\cG$ are obtained from $\cT^{ad}$, $\cG^{ad}$ as follows:
$$
\cT = \cT^{ad}\times^{W^{aff}}\widetilde{W}^{aff}
$$
$$
\cG = \cG^{ad}\times^{W^{aff}}\widetilde{W}^{aff}.
$$
Here the symbol $\times^{W^{aff}}$ denotes the balanced product, where $W^{aff}$ acts on $\widetilde{W}^{aff}$ by left translations and on $\cT$, $\cG$ by right translations. It is left as an exercise to check that the right-hand expressions have natural groupoid structures and that the equalities are as groupoids\footnote{This is essentially the same as the original construction of the transformation groupoid $\cG=\widetilde{W}^{aff}\times\liet^*$.}.

It follows that the right adjoint of the restriction functor $Res^{\cT}_{\cT^{ad}}$,
$$
Coind^{\cT}_{\cT^{ad}}(-):=(-)\otimes^{\cO(\cT^{ad})}\cO(\cT) = V\otimes^{\cO(W^{aff})}\cO(\widetilde{W}^{aff})\simeq\bigoplus_{\pi_1(G^\vee)}V ~~\footnote{The symbols $\otimes^{\cO(\cT^{ad})}$, $\otimes^{\cO(W^{aff})}$ denote the balanced tensor product (over $\liet^*$), which is to say the subspace of the tensor product on which the two `inner' comodule structures coincide.},
$$
satisfies the equation
$$
Res_{\cT}^{\cG}\circ Coind_{\cT^{ad}}^{\cT}            \cong Coind^{\cG}_{\cG^{ad}}\circ Res_{\cT^{ad}}^{\cG^{ad}}.
$$
Both $Res_{\cT^{ad}}^{\cT}$ and $Res_{\cG^{ad}}^{\cG}$ satisfy the conditions of comonadicity (their right adjoints are the coinduction functors above; they are exact functors between abelian categories which kill no objects). Writing $C_{\cT}$, $C_{\cG}$ for the corresponding comonads, we have
$$
C_{\cG}\circ Res_{\cT^{ad}}^{\cG^{ad}}\cong Res_{\cT^{ad}}^{\cG^{ad}}\circ C_{\cT};
$$
thus $Res_{\cT^{ad}}^{\cG^{ad}}$ defines a functor $\widetilde{Res}_{\cT^{ad}}^{\cG^{ad}}:C_{\cT}-comod\to C_{\cG}-comod$ which makes the following diagram commutative:
$$
\begin{matrix}Rep_{\liet^*}(\cT)&\xrightarrow{\sim}&C_{\cT}-comod\\
\downarrow{^{Res_{\cT}^{\cG}}}&&\downarrow{^{\widetilde{Res}_{\cT^{ad}}^{\cG^{ad}}}}\\
Rep_{\liet^*}(\cG)&\xrightarrow{\sim}&C_{\cG}-comod.\end{matrix}
$$
That $\widetilde{Res}_{\cT^{ad}}^{\cG^{ad}}:C_{\cT}-comod\to C_{\cG}-comod$ is fully faithful follows from the fact established previously that $Res_{\cT^{ad}}^{\cG^{ad}}:Rep_{\liet*}(\cT^{ad})\to Rep_{\liet*}(\cG^{ad})$ is fully faithful. 

The fully faithfulness of $Res_{\cT^{ad}}^{\cG^{ad}}$ also implies that a $C_{\cG}$-comodule which is also a representation of $\cT^{ad}$ is a $C_{\cT}$-comodule in a unique way.  Equivalently, a representation of $\cG$ is the restriction of a representation of $\cT$ if and only if its restricted structure of $\cG^{aff}$-representation descends to one of $\cT^{aff}$-representation. This shows that the objects of $Toda_1(G)$ are precisely $\widetilde{W}^{aff}$-equivariant quasicoherent sheaves on $\liet^*$ which admit $W^{aff}$-equivariant resolutions by $W^{aff}$-equivariant quasicoherent sheaves which are flat over $\liet^*$ and have trivial isotropy. On the other hand, every $Toda_1(G)$-module admits a resolution by free $Toda_1(G)$-modules, which correspond to $\widetilde{W}^{aff}$-equivariant quasicoherent sheaves which are free over $\liet^*$ and have trivial isotropy in $W^{aff}$. Thus we have:

\begin{thm}There is an equivalence of categories:
$$
Rep_{\liet^*}(\cT)\xrightarrow{\sim}QCoh^{\widetilde{W}^{aff}}(\liet^*)^{``\cI"}.
$$
By definition the second category is the full subcategory of $QCoh^{\widetilde{W}^{aff}}(\liet^*)$ consisting of objects which admit resolutions by $\widetilde{W}^{aff}$-equivariant quasicoherent sheaves which are free over $\liet^*$ and have trivial isotropy in $W^{aff}$. However, to check this condition it is sufficient to find a $W^{aff}$-equivariant resolution by $W^{aff}$-equivariant quasicoherent sheaves which are flat over $\liet^*$ and have trivial isotropy.
\end{thm}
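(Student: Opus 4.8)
The plan is to bolt together the ingredients assembled in Sections \ref{adj} and \ref{checktion}. First I would dispose of the adjoint case. There $\widetilde{W}^{aff}=W^{aff}$, the groupoid ind-scheme is $\cG=W^{aff}\#\liet^*$ with adjacency groupoid $\cT$, and the four conditions verified in Section \ref{adj} are precisely the hypotheses of Proposition \ref{bigprop}. Parts (1) and (2) of that proposition then give that the restriction functor $Res_{\cT}^{\cG}:Rep_{\liet^*}(\cT)\to Rep_{\liet^*}(\cG)=QCoh^{W^{aff}}(\liet^*)$ is fully faithful, while part (3) identifies its essential image with $QCoh^{W^{aff}}(\liet^*)^{``\cI"}$, that is, with the full subcategory of $W^{aff}$-equivariant quasicoherent sheaves admitting a $W^{aff}$-equivariant resolution by $\cO(\liet^*)$-flat sheaves with trivial isotropy.

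Second, I would promote this to general reductive $G$ using the balanced-product presentations $\cT=\cT^{ad}\times^{W^{aff}}\widetilde{W}^{aff}$ and $\cG=\cG^{ad}\times^{W^{aff}}\widetilde{W}^{aff}$ of Section \ref{checktion}, together with the commuting square of comonads constructed there. Since $Res_{\cT^{ad}}^{\cT}$ and $Res_{\cG^{ad}}^{\cG}$ are comonadic and their comonads intertwine the adjoint-case functor $Res_{\cT^{ad}}^{\cG^{ad}}$, which is fully faithful by the previous paragraph, one obtains that $Res_{\cT}^{\cG}:Rep_{\liet^*}(\cT)\to QCoh^{\widetilde{W}^{aff}}(\liet^*)$ is fully faithful, with essential image consisting of exactly those $\widetilde{W}^{aff}$-equivariant sheaves $V$ whose restriction along $W^{aff}\hookrightarrow\widetilde{W}^{aff}$ lands in the essential image of $Res_{\cT^{ad}}^{\cG^{ad}}$. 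By the adjoint case, that restriction condition says precisely that $V$, viewed as a $W^{aff}$-equivariant sheaf, admits a $W^{aff}$-equivariant resolution by $\cO(\liet^*)$-flat sheaves with trivial isotropy. This already establishes the ``sufficient criterion'' clause of the statement and exhibits $Rep_{\liet^*}(\cT)$ as the corresponding full subcategory of $QCoh^{\widetilde{W}^{aff}}(\liet^*)$.

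Third, I would check that this full subcategory coincides with the one named in the theorem, namely the objects admitting a $\widetilde{W}^{aff}$-equivariant resolution by sheaves free over $\liet^*$ with trivial $W^{aff}$-isotropy. One containment is formal: restricting such a resolution along $W^{aff}\hookrightarrow\widetilde{W}^{aff}$ produces a $W^{aff}$-equivariant resolution by $\cO(\liet^*)$-flat sheaves with trivial isotropy, so the object satisfies the criterion of the previous paragraph. For the reverse containment I would invoke the equivalence $Rep_{\liet^*}(\cT)\cong Toda_1(G)\text{-mod}$ of Sections \ref{2.8}--\ref{2.10}: every $Toda_1(G)$-module has a resolution by free ones, and the Fourier transform of a free $Toda_1(G)$-module is free over $\liet^*$ (its underlying $\cO(\liet^*)$-module being $\pi^*$ of the underlying $\cO(\liet^*//W)$-module of a free $Toda_1(G)$-module, which is free by the equivariant-formality / Schubert-basis description of $H_\bullet^{G^\vee\rtimes\bG_m}(Gr)$ recalled in Section \ref{2.3}) and has trivial $W^{aff}$-isotropy because it is the $\cG$-restriction of a representation of $\cT$, cf. the definition of the functor $F$ in Lemma \ref{lemma2}. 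Hence every object in the essential image of $Res_{\cT}^{\cG}$ admits a resolution of the asserted $\widetilde{W}^{aff}$-equivariant form, and the two subcategories agree.

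The step I expect to be the main obstacle is the one just sketched: one must pin down the Fourier transform of a free $Toda_1(G)$-module precisely enough to read off its freeness over $\liet^*$, which in the end rests on equivariant formality of $Gr$ and $Fl$, hence on the Goresky--Kottwitz--MacPherson localization input of \cite{GKM} that already powers Section \ref{adj}. Once that is in hand, the rest is bookkeeping with the comonadic square of Section \ref{checktion} and with restriction along $W^{aff}\hookrightarrow\widetilde{W}^{aff}$.
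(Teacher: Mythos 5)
Your proposal follows the paper's own proof essentially step for step: the adjoint case via the verification of the hypotheses of Proposition \ref{bigprop}, the passage to general reductive $G$ via the balanced-product presentations and the commuting comonadic square, and the identification of the two descriptions of the essential image via free resolutions of $Toda_1(G)$-modules (whose Fourier transforms are free over $\liet^*$ by equivariant formality, exactly as you say). The extra detail you supply in the last step merely fills in what the paper asserts in one line, so there is nothing to add.
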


\subsection{Finite parabolic descent.}\label{fpd}In this special case, the `derived isotropy' condition has a more familiar description. Let us denote the stabilizer in $W^{aff}$ of the closed point $x\in\liet^*$ by $\Gamma^x$. Recall that $W^{aff}$ acts simply transitively on the connected components of the complement of the affine reflection hyperplanes (`hyperplanes' for short) in the real span $\liet^*_\R$ of the character lattice. These connected components are called \emph{alcoves}. It follows immediately that $\Gamma^{Re(x)}$ acts sub-simply transitively on the set of alcoves containing $Re(x)$ in their closure, so that $\Gamma^{Re(x)}$ is in particular finite. On the other hand, for any two such alcoves $P,Q$ one may draw a path between them sufficiently close to ${Re(x)}$ that the only hyperplanes it crosses pass through ${Re(x)}$; we may deform this path slightly (staying in $\liet^*_\R$) so that it does not meet any pairwise intersections of hyperplanes, and the result is a sequence $P=P_0,\ldots,P_n=Q$ of alcoves such that $P_{j-1}$, $P_j$ share a common face for each $j=1,\ldots,n$, which is contained in the hyperplane corresponding to the affine root $\alpha^j$. Then $s_{\alpha^j}P_{j-1}=P_j$, so that $s_{\alpha^n}\ldots s_{\alpha^1}P_0=P_n$. This shows that $\Gamma^{Re(x)}$ acts simply transitively on the alcoves containing ${Re(x)}$ in their closure and is generated by reflections. Moreover, the reflection $s_{\alpha^1}s_{\alpha^2}\ldots s_{\alpha^{n-1}} s_{\alpha^n} s_{\alpha^{n-1}}\ldots s_{\alpha^2} s_{\alpha^1}$ is through some face of $P$; it follows by induction on $n$ that $\Gamma^{Re(x)}$ is generated by its reflections through faces of $P$. Since $P$ is conjugate to the fundamental alcove, which is bounded by the simple root hyperplanes, it follows that $\Gamma^{Re(x)}$ is a parabolic subgroup of $W^{aff}$. On the other hand, translating by $-x$ sends the hyperplanes passing through $x$ to certain hyperplanes passing through $0$, which are still reflection hyperplanes. We have thus proved:
\begin{lem}The composition 
$$
\Gamma^{Re(x)}\to W^{aff}\xrightarrow{\pi} W
$$
is injective and realizes $\Gamma^{Re(x)}$ as a reflection subgroup of $W$.

In fact, $\Gamma^{Re(x)}$ is realized as the Weyl subgroup corresponding to some root subsystem $\Phi_x$ of the root system $\Phi$ of $W$ \footnote{$\Phi_x$ is not necessarily integrally closed in $\Phi$, nor irreducible even if $\Phi$ is; see for instance what happens in type $G_2$.}.
\end{lem}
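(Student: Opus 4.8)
The plan is to combine the structural facts about $\Gamma^{Re(x)}$ assembled in the discussion above with the elementary remark that, on the stabilizer of a point, the linear-part projection $\pi\colon W^{aff}\to W$ (the homomorphism killing translations) is realized as conjugation by a single translation. Concretely: if $w\in W^{aff}$ fixes $Re(x)$ and $\pi(w)=\bar w$, then $w$ and $t_{Re(x)}\,\bar w\,t_{-Re(x)}$ have the same linear part and agree at $Re(x)$, hence coincide (an affine transformation being determined by its linear part together with one value). Therefore $\pi$ restricted to $\Gamma^{Re(x)}$ equals conjugation by $t_{-Re(x)}$, which is manifestly \emph{injective}. This is exactly the ``translating by $-x$'' observation made just before the Lemma, phrased group-theoretically.

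For the image I would use that $\Gamma^{Re(x)}$ is generated by the affine reflections $s_\alpha$ whose hyperplane $H_\alpha$ passes through $Re(x)$: each such $s_\alpha$ fixes $Re(x)$, hence lies in $\Gamma^{Re(x)}$, and they generate it by the simply transitive action on the alcoves having $Re(x)$ in their closure established above. Writing an affine root as $(\bar\alpha,k)$ with $\bar\alpha\in\Phi$, $k\in\mathbb{Z}$ and hyperplane $\{v:\langle\bar\alpha,v\rangle=k\}$, the incidence $H_{(\bar\alpha,k)}\ni Re(x)$ forces $\langle\bar\alpha,Re(x)\rangle=k\in\mathbb{Z}$, and $\pi$ (i.e. conjugation by $t_{-Re(x)}$) sends $s_{(\bar\alpha,k)}$ to the finite reflection $s_{\bar\alpha}\in W$. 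Thus $\pi(\Gamma^{Re(x)})$ is generated by reflections, so is a reflection subgroup of $W$; combined with injectivity this realizes $\Gamma^{Re(x)}$ as that subgroup.

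To identify it as a Weyl subgroup, set $\Phi_x:=\{\bar\alpha\in\Phi:\langle\bar\alpha,Re(x)\rangle\in\mathbb{Z}\}$, the finite roots admitting a $W^{aff}$-reflection hyperplane through $Re(x)$. The previous paragraph shows $\pi(\Gamma^{Re(x)})\subseteq\langle s_{\bar\alpha}:\bar\alpha\in\Phi_x\rangle$ (every generator $s_{(\bar\alpha,k)}$ has $\bar\alpha\in\Phi_x$), while conversely for each $\bar\alpha\in\Phi_x$ the affine reflection through $\{v:\langle\bar\alpha,v\rangle=\langle\bar\alpha,Re(x)\rangle\}$ lies in the full stabilizer $\Gamma^{Re(x)}$ and maps to $s_{\bar\alpha}$, so $\pi(\Gamma^{Re(x)})=\langle s_{\bar\alpha}:\bar\alpha\in\Phi_x\rangle$. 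It then remains only to check that $\Phi_x$ is a root subsystem: it is stable under negation, and for $\bar\alpha,\bar\beta\in\Phi_x$ one computes $\langle s_{\bar\beta}\bar\alpha,Re(x)\rangle=\langle\bar\alpha,Re(x)\rangle-\langle\bar\alpha,\bar\beta^\vee\rangle\langle\bar\beta,Re(x)\rangle\in\mathbb{Z}$, using the crystallographic integrality $\langle\bar\alpha,\bar\beta^\vee\rangle\in\mathbb{Z}$, whence $s_{\bar\beta}\bar\alpha\in\Phi_x$. Therefore $\pi(\Gamma^{Re(x)})=W(\Phi_x)$, the Weyl group of $\Phi_x$ (note $\Phi_x$ need be neither integrally closed in $\Phi$ nor irreducible).

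The only ingredient here that is not pure bookkeeping is the claim --- imported from the discussion preceding the Lemma --- that $\Gamma^{Re(x)}$ is generated by affine reflections through hyperplanes containing $Re(x)$, which rests on the simply transitive action of $\Gamma^{Re(x)}$ on the alcoves having $Re(x)$ in their closure. Granted that, injectivity, the reflection-subgroup statement, and the crystallographic root-subsystem check are all short. So the main (already-resolved) obstacle is precisely the alcove combinatorics carried out just above; everything downstream of it is formal.
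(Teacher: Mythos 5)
Your proof is correct and follows essentially the same route as the paper: both arguments import from the preceding alcove discussion that $\Gamma^{Re(x)}$ is generated by affine reflections through hyperplanes containing $Re(x)$, and both identify $\pi$ on the stabilizer with conjugation by the translation $t_{-Re(x)}$, sending those hyperplanes to reflection hyperplanes through the origin. Your explicit definition of $\Phi_x=\{\bar\alpha\in\Phi:\langle\bar\alpha,Re(x)\rangle\in\mathbb{Z}\}$ and the crystallographic check that it is a root subsystem merely make precise what the paper leaves implicit.
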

Now we may calculate:
$$
\begin{matrix}\Gamma^x&=&\Gamma^{Re(x)}\cap \pi^{-1}Stab_W(Im(x))\\
&\xrightarrow{\sim}&\pi(\Gamma^{Re(x)}\cap \pi^{-1}Stab_W(Im(x)))\\
&=&\pi(\Gamma^{Re(x)})\cap Stab_W(Im(x))\\
&=&Stab_{\pi(\Gamma^{Re(x)})}(Im(x))\end{matrix}
$$
Let $V^x$ denote the fixed point subspace of $\pi(\Gamma^{Re(x)})$ acting on $\liet^*_\R$. $V^x$ is complementary to the span of $\Phi_x$, so that $\Phi_x+V^x$ is a root system of full rank in $\liet^*+\R/V^x$, with Weyl group $\pi(\Gamma^{Re(x)})$. We have
$$
\begin{matrix}\Gamma^x&=&Stab_{\pi(\Gamma^{Re(x)})}(Im(x))\\
&=&Stab_{\pi(\Gamma^{Re(x)})}(Im(x)+V^x)\end{matrix}
$$
which is a parabolic subgroup of $\pi(\Gamma^{Re(x)})$ \footnote{being the stabilizer of a point in the reflection representation of a Weyl group; remove the words `affine' from the discussion at the start of this paragraph.}. We have proved:
\begin{prop}The stabilizer group $\Gamma^x$ is a finite parabolic subgroup of $W^{aff}$. In particular it is generated by affine reflections passing through $x$ \footnote{It seems likely that this is well known, but I have not been able to find a reference for it.}.
\end{prop}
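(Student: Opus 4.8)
The plan is to deduce the statement from the Lemma just established, together with two standard facts about finite reflection groups: that a parabolic subgroup of a parabolic subgroup is again parabolic, and that the stabilizer of a point in the reflection representation of a finite Coxeter group is a parabolic subgroup (see \cite{humph}).

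First I would identify $\Gamma^x$ inside $\Gamma^{Re(x)}$. Since the translational part of an element of $W^{aff}$ acts trivially in the imaginary direction, an element $\gamma$ fixes $x\in\liet^*$ if and only if it fixes $Re(x)$ (as an affine transformation) and its linear part $\pi(\gamma)\in W$ fixes $Im(x)$. Hence $\Gamma^x=\Gamma^{Re(x)}\cap\pi^{-1}\stab_W(Im(x))$; and because $\pi$ restricted to $\Gamma^{Re(x)}$ is injective with image the Weyl group $W(\Phi_x)$ of the root subsystem $\Phi_x$ (by the Lemma), this exhibits an isomorphism $\Gamma^x\cong\stab_{W(\Phi_x)}(Im(x))$.

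The crux is then that $\stab_{W(\Phi_x)}(Im(x))$ is a parabolic subgroup of $W(\Phi_x)$. The one point requiring care is that $\Phi_x$ need not span $\liet^*_\R$: letting $V^x$ denote the subspace fixed pointwise by $W(\Phi_x)$, a complement to the span of $\Phi_x$, one has $\stab_{W(\Phi_x)}(Im(x))=\stab_{W(\Phi_x)}(\bar v)$ where $\bar v$ is the image of $Im(x)$ in $\liet^*_\R/V^x$, a space on which $W(\Phi_x)$ acts as a genuine finite reflection group with root system (the image of) $\Phi_x$ of full rank. One then applies the classical theorem on point stabilizers --- which is precisely the alcove argument given above for $\Gamma^{Re(x)}$ with the word ``affine'' deleted, and may alternatively be cited from \cite{humph} --- to conclude that $\stab_{W(\Phi_x)}(\bar v)$ is parabolic in $W(\Phi_x)$.

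Finally I would assemble: a parabolic subgroup of $W(\Phi_x)\cong\Gamma^{Re(x)}$, which is in turn parabolic in $W^{aff}$, is parabolic in $W^{aff}$; and $\Gamma^x$ is finite, being contained in the finite group $\Gamma^{Re(x)}$. Being a finite parabolic subgroup of $W^{aff}$, $\Gamma^x$ is generated by affine reflections of $W^{aff}$, and since every element of $\Gamma^x$ fixes $x$, the mirror of each such reflection passes through $x$. I expect the main obstacle to be essentially organizational --- keeping straight the nested reflection groups $\Gamma^x\subset\Gamma^{Re(x)}\subset W^{aff}$ and making the passage to the quotient $\liet^*_\R/V^x$ clean, so that a possibly non-spanning (and possibly non-integrally-closed) root subsystem $\Phi_x$ causes no difficulty --- rather than anything genuinely deep, since every ingredient is either furnished by the Lemma or is a standard property of finite reflection groups.
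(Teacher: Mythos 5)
Your proposal is correct and follows essentially the same route as the paper: the identification $\Gamma^x=\Gamma^{Re(x)}\cap\pi^{-1}\stab_W(Im(x))\cong\stab_{\pi(\Gamma^{Re(x)})}(Im(x))$, the passage to the full-rank root system on $\liet^*_\R/V^x$, and the invocation of the point-stabilizer-is-parabolic fact (the de-affinized alcove argument), followed by the observation that parabolic-in-parabolic is parabolic. The paper phrases the last reduction as $\stab_{\pi(\Gamma^{Re(x)})}(Im(x))=\stab_{\pi(\Gamma^{Re(x)})}(Im(x)+V^x)$ rather than passing explicitly to the quotient, but this is the same step.
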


Moreover, every finite parabolic subgroup of $W^{aff}$ arises in this way. Next, we have (paraphrasing):
\begin{thm}[Chevalley-Shephard-Todd, \cite{C}\cite{ST}]
Let $V$ be a complex vector space and $\Gamma$ be a finite subgroup of $GL(V)$ generated by reflections. Then $\cO(V)$ is free of finite rank over  $\cO(V//\Gamma)$.
\end{thm}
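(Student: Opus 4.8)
Throughout write $R=\cO(V)=\bC[x_1,\dots,x_n]$, where $n=\dim V$ and the $x_i$ form a basis of $V^*$ in degree $1$, and $S=\cO(V)^\Gamma$; since $\Gamma$ is finite, $\cO(V//\Gamma)=S$. The plan is to reduce the assertion to the classical theorem of Chevalley that $S$ is again a graded polynomial ring, and then to deduce freeness from a Cohen--Macaulayness argument. The preliminaries are routine: each $x_i$ satisfies the monic equation $\prod_{\gamma\in\Gamma}(T-\gamma x_i)=0$ whose coefficients lie in $S$, so $R$ is integral, hence (being a finitely generated $\bC$-algebra) module-finite, over $S$; in particular $\spec(R)\to\spec(S)$ is finite and $S$ is a finitely generated graded $\bC$-domain of Krull dimension $n$.

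The substantive input is \textbf{Chevalley's theorem}: there exist homogeneous $f_1,\dots,f_n\in S$, of degrees $d_1,\dots,d_n$ say, with $S=\bC[f_1,\dots,f_n]$. I would argue along Chevalley's original lines. Let $\mathfrak{n}\subset S$ be the ideal generated by the positive-degree homogeneous invariants and let $f_1,\dots,f_m$ be a minimal homogeneous generating set of $\mathfrak{n}$. The crux --- and the only place the reflection hypothesis enters --- is the divisibility lemma: if $g_1,\dots,g_k\in S$ are homogeneous with $g_1\notin(g_2,\dots,g_k)S$, then every homogeneous relation $\sum_i a_ig_i=0$ with $a_i\in R$ has $a_1\in(g_2,\dots,g_k)R$. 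One proves this by induction on $\deg a_1$: for a reflection $s\in\Gamma$ with reflecting hyperplane $\ker\ell_s$, each $a_i-s(a_i)$ is divisible by the linear form $\ell_s$, so applying $1-s$ to the ($\Gamma$-invariant-coefficient) relation and cancelling $\ell_s$ produces a relation $\sum_i\big(\ell_s^{-1}(a_i-s a_i)\big)g_i=0$ of strictly smaller first-coefficient degree, whence by induction $\ell_s^{-1}(a_1-s a_1)\in(g_2,\dots,g_k)R$ and so $a_1-s(a_1)\in(g_2,\dots,g_k)R$; since $\Gamma$ is generated by reflections this holds with any $\gamma\in\Gamma$ in place of $s$, and averaging (via the Reynolds operator, together with the original relation pushed through it) disposes of the $\Gamma$-invariant part of $a_1$. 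Applying the lemma to $f_1,\dots,f_m$, whose minimality furnishes the hypothesis at each index, shows that $f_1,\dots,f_m$ is a regular sequence in $R$; since $R/(f_1,\dots,f_m)R=R\otimes_S S/\mathfrak{n}$ is finite-dimensional over $\bC$, it follows that $m=n$, that $f_1,\dots,f_n$ form a homogeneous system of parameters, and (comparing Krull dimensions of $R$ and $\bC[f_1,\dots,f_n]$, over which $R$ is module-finite) that they are algebraically independent. Finally every positive-degree homogeneous invariant lies in $\mathfrak{n}=(f_1,\dots,f_n)S$ and hence, by induction on degree, is a polynomial in the $f_i$; thus $S=\bC[f_1,\dots,f_n]$. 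This step, and principally the divisibility lemma, is the main obstacle; the rest is formal.

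Granting Chevalley's theorem, the conclusion is quick. The $f_i$ form a homogeneous system of parameters for $R$: $S/(f_1,\dots,f_n)S=\bC$ and $R$ is module-finite over $S$, so $R/(f_1,\dots,f_n)R=R\otimes_S\bC$ is a finite-dimensional graded $\bC$-vector space, of Krull dimension $0$, and the sequence has length $n=\dim R$. Since $R$ is a polynomial ring it is Cohen--Macaulay, so this system of parameters is an $R$-regular sequence. Choosing homogeneous $b_1,\dots,b_N\in R$ whose images form a $\bC$-basis of $R/(f_1,\dots,f_n)R$, graded Nakayama shows they generate $R$ over $S=\bC[f_1,\dots,f_n]$, giving a graded surjection $\bigoplus_{j=1}^N S(-\deg b_j)\twoheadrightarrow R$ of graded $S$-modules. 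Regularity of the sequence gives $\mathrm{Hilb}\big(R/(f_1,\dots,f_n)R\big)=\prod_i(1-t^{d_i})\cdot\mathrm{Hilb}(R)$, hence $\mathrm{Hilb}(R)=\big(\textstyle\sum_j t^{\deg b_j}\big)\prod_i(1-t^{d_i})^{-1}=\mathrm{Hilb}\big(\bigoplus_j S(-\deg b_j)\big)$; a graded surjection of graded $S$-modules with equal Hilbert series is an isomorphism, so $R\cong\bigoplus_{j=1}^N S(-\deg b_j)$ is free of finite rank $N$ over $S$ (and $N=\prod_i d_i=|\Gamma|$, as one sees from the fraction-field extension or by setting $t=1$). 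Alternatively one may invoke Auslander--Buchsbaum: $R$ is a maximal Cohen--Macaulay module over the regular ring $S$, hence of projective dimension $0$, hence graded-free.
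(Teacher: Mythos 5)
The paper does not prove this statement; it quotes it from Chevalley and Shephard--Todd, so the only thing to assess is the internal correctness of your argument. Your second half --- deducing freeness of $R$ over $S$ from Chevalley's theorem that $S$ is a polynomial ring, via Cohen--Macaulayness of $R$, graded Nakayama and the Hilbert series comparison (or Auslander--Buchsbaum) --- is correct and standard. The gap is in your proof of Chevalley's theorem, specifically in the divisibility lemma, which is false as you state it. Take $\Gamma=S_2$ acting on $V=\bC^2$ by swapping coordinates, so $S=\bC[e_1,e_2]$ with $e_1=x+y$, $e_2=xy$, and take $g_1=e_1^2$, $g_2=e_1^3$, $g_3=e_1e_2$. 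Then $g_1\notin(g_2,g_3)S$ (that ideal is concentrated in degrees at least $3$), yet the homogeneous relation
$$
e_2\cdot g_1+0\cdot g_2-e_1\cdot g_3=0
$$
has $a_1=e_2=xy$, which does not lie in $(g_2,g_3)R=(x+y)\cdot\big((x+y)^2,xy\big)R$, since every element of that ideal is divisible by $x+y$. The step of your induction that fails is exactly the one you gloss over: ``averaging \ldots disposes of the $\Gamma$-invariant part of $a_1$''. Pushing the relation through the Reynolds operator only yields $\rho(a_1)g_1\in(g_2,\dots,g_k)S$, and from $g_1\notin(g_2,\dots,g_k)S$ one cannot conclude $\rho(a_1)\in(g_2,\dots,g_k)R$: that would require $g_1$ to be a nonzerodivisor modulo $(g_2,\dots,g_k)R$, which is precisely the content of the lemma itself and, as the example shows, is false. (In the example $a_1$ is already invariant, so the entire burden falls on this step.)

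The correct form of the lemma (Humphreys, Section 3.5, or Bourbaki) concludes only that $a_1$ lies in the ideal $R\cdot S_+$ generated by \emph{all} positive-degree invariants; there the invariant part of $a_1$ is harmless because $\rho(a_1)$, having positive degree, automatically lies in $S_+$. With only this weaker conclusion, your claim that the minimal generators $f_1,\dots,f_m$ of $S_+S$ form a regular sequence in $R$ does not follow --- and note that this regular-sequence property is essentially equivalent to the freeness you are trying to prove, so a direct attack on it was bound to be circular. The standard route from the weak lemma to ``$S$ is a polynomial ring'' is the separate algebraic-independence argument: take a nonzero polynomial relation $h(f_1,\dots,f_m)=0$ of minimal degree, differentiate with respect to the coordinates $x_k$ to obtain relations among the partials $\partial h/\partial y_i$ evaluated at the $f_j$, and feed these into the weak lemma together with Euler's identity to contradict minimality of the generating set. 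Once algebraic independence (hence $m=n$ and $S=\bC[f_1,\dots,f_n]$) is established, your concluding Cohen--Macaulay argument goes through verbatim.
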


Thus in the situation of Chevalley-Shephard-Todd, we have:
$$
QCoh(V//\Gamma)\cong Rep_V(V\times_{V//\Gamma}V)
$$
by faithfully flat descent. Also, since $\cO(V\times_{V//\Gamma}V)$ is free of finite rank over either copy of $\cO(V)$, it is in particular torsion-free and so the natural map
$$
\cO(V\times_{V//\Gamma}V)\to\cO(\Gamma\# V),
$$
which is an isomorphism generically over $V$ for any finite $\Gamma$ (not necessarily generated by reflections), is injective. We conclude that $V\times_{V//\Gamma}V$ is the image of the natural map $\Gamma\# V\to V\times V$, i.e. the adjacency groupoid of $\Gamma\# V$. These conclusions hold also for the action of $\Gamma^x$ on $\liet^*$, since it is conjugate to the action of a finite reflection group, under the automorphism of $\liet^*$ given by translating by $-x$. We will write $\cG^x$ for $\Gamma^x\#\liet^*$ and $\cT^x$, $\cI^x$ for the resulting adjacency groupoid and isotropy subgroup.

In fact, we have the following
\begin{lem}The groupoids $\cG^x$ (over $\liet^*$) and $\cG^x_{(y,y)}$ (over $\liet^*_y$) for any closed point $y$ of $\liet^*$ all satisfy the conditions of Proposition \ref{bigprop}.\end{lem}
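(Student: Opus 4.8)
The plan is to reduce the lemma to the verification already carried out, in paragraph \ref{adj}, for the affine case: the only change is that $\widetilde{W}^{aff}$ is replaced by a \emph{finite} reflection group, which turns $\cG^x$ into an honest (finite) scheme over $\liet^*$ and removes every ind-scheme subtlety. First I would normalise: since $\Gamma^x$ fixes $x$, conjugating by translation by $-x$ we may as in \ref{fpd} treat $\Gamma^x$ as a linear finite reflection group on $\liet^*$. For $\cG^x_{(y,y)}$, its $\gamma$-component is the localisation at $(y,y)$ of the graph $\Gamma_\gamma\subset\liet^*\times\liet^*$, which is empty unless $\gamma$ fixes $y$; hence $\cG^x_{(y,y)}=\Gamma^{x,y}\#\liet^*_y$ with $\Gamma^{x,y}:=\stab_{\Gamma^x}(y)$, and $\Gamma^{x,y}$, being the stabiliser of a point in (a conjugate of) a reflection representation, is again a finite reflection group (a parabolic subgroup of $\Gamma^x$, exactly as in \ref{fpd}), which translating by $-y$ we may take to act linearly on $\liet^*$. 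So in every case it suffices to verify the conditions of Proposition \ref{bigprop} for $\cG=\Gamma\#V$ with $\Gamma\subset GL(V)$ a finite reflection group, with $\cO(V)$ possibly replaced by a localisation at a closed point (under which Chevalley--Shephard--Todd and all the arguments below survive, being localisations of statements about free modules). As $\Gamma$ is finite, $\cO(\cG)$, $\cO(\cT)$, $\cO(\cI)$ are genuine finite $\cO(V)$-algebras, so all admissibility hypotheses are automatic.

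Next I would run through the conditions, following \ref{adj}. For (1a): $\cO(\cG)=\bigoplus_{\gamma\in\Gamma}\cO(V)$ is free of finite rank over $\cO(V)$ via both $s^\#$ and $t^\#$ (each $\gamma$ being an automorphism of $V$); and $\cT=V\times_{V//\Gamma}V$ by \ref{fpd}, so $\cO(\cT)=\cO(V)\otimes_{\cO(V)^\Gamma}\cO(V)$ is free of finite rank over either copy of $\cO(V)$ by Chevalley--Shephard--Todd. This is Hypothesis \ref{hyp2} together with the finite-flatness part of Hypothesis \ref{hyp}. For (1b) I must show $\cO(\cT)=\ker\bigl(\cO(\cG)\xrightarrow{\Delta-\mathrm{id}\otimes 1}\cO(\cG)\otimes\cO(\cI)\bigr)$. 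Writing an element of $\cO(\cG)$ as $(\xi_\gamma)_{\gamma\in\Gamma}$ with $\xi_\gamma\in\cO(V)$, and using $\cI=\coprod_\gamma V^\gamma$, one unwinds the transformation-groupoid formulas exactly as in \ref{adj}(2) and identifies this kernel with
$$\{(\xi_\gamma)_\gamma \ :\ \xi_\gamma|_{V^{\gamma^{-1}\delta}}=\xi_\delta|_{V^{\gamma^{-1}\delta}}\ \text{ for all }\gamma,\delta\in\Gamma\},$$
the GKM-type relations attached to $\Gamma$. That $\cO(\cT)$ lies inside is clear; for the reverse inclusion I would argue as in \ref{adj}(2): $\Gamma$ is the Weyl group of a root subsystem of $\Phi$ (this is precisely how $\Gamma^x$ and its point stabilisers $\Gamma^{x,y}$ arise, by \ref{fpd}), so there is a connected reductive $G_\Gamma$ with maximal torus $T_\Gamma$, $\lie(T_\Gamma)^*\cong V$ and Weyl group $\Gamma$, and the localisation theorem \eqref{locali} of \cite{GKM} applied to $G_\Gamma/B_\Gamma$ (whose $T_\Gamma$-fixed points are $\Gamma$, with one-dimensional orbits coming from reflections) identifies $H^\bullet_{T_\Gamma}(G_\Gamma/B_\Gamma)=\cO(V)\otimes_{\cO(V)^\Gamma}\cO(V)=\cO(\cT)$ with exactly the above kernel. (Self-contained alternative: $\cO(\cT)$ is free, hence reflexive, over the regular ring $\cO(V)$, the kernel is torsion-free, and the two agree after inverting $\cO(V)$; so equality may be checked at height-one primes, being trivial away from the reflection arrangement and reducing, at the generic point of a reflection hyperplane $H_s$ where the pointwise stabiliser is $\{e,s\}$, to the rank-one identity $R\otimes_{R^{\mathbf{Z}/2}}R=\{(a,b)\in R^2: a\equiv b \bmod \alpha_s\}$ over the relevant discrete valuation ring $R$.)

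For (1c) I would use the averaging section of \ref{adj}(3): for a closed point $z$ of $V$ the stabiliser $\Gamma_z$ is finite, $\beta_z\bigl((\xi_\gamma)_\gamma\bigr):=\tfrac{1}{|\Gamma_z|}\sum_{\gamma\in\Gamma_z}\xi_\gamma$ (germs at $z$, suitably identified) is a $t^\#$-linear section of $t^\#_z$, and $\beta_z\circ s^\#$ is $\Gamma_z$-averaging followed by localisation, whose values are $\Gamma_z$-invariant germs; hence $s^\#_z\circ\beta_z\circ s^\#$ and $t^\#_z\circ\beta_z\circ s^\#$ agree on each factor that survives in $\cO(\cG_{(z,z)})$, which is Hypothesis \ref{hyp4}. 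Finally (2): Hypothesis \ref{hyp5} holds taking the $\cR_i$ to be the graphs $\Gamma_\gamma$, $\gamma\in\Gamma$ — $\cG\times_\cT\Gamma_\gamma\to\Gamma_\gamma$ is an isomorphism, so universally injective on functions, and the multiplications $\Gamma_\gamma\times_V\cT_U\to\cT$ form an open cover of $\cT$, hence are jointly universally injective on functions — exactly as in \ref{adj}(4). Since each of these steps is equally valid over $\cO(V)$ and over its localisation at a closed point, the same verification simultaneously covers $\cG^x$ and every $\cG^x_{(y,y)}$.

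The one genuinely non-formal point, and therefore the part I expect to require the most care, is (1b): the statement that the fibre product $V\times_{V//\Gamma}V$ is cut out inside $\coprod_{\gamma}\Gamma_\gamma$ by precisely the GKM divisibility relations. Everything else is either Chevalley--Shephard--Todd applied to the finite reflection group $\Gamma$, or a word-for-word transcription of the four items of paragraph \ref{adj}.
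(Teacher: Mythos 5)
Your proposal is correct, and for conditions (1)(a), (1)(c) and (2) it coincides with the paper's verification (Chevalley--Shephard--Todd for finite flatness, the averaging section for Hypothesis \ref{hyp4}, the graphs $\Gamma_\gamma$ for Hypothesis \ref{hyp5}). But for condition (1)(b) -- the one you rightly single out as the only non-formal point -- you take a genuinely different route from the paper. You propose to prove the finite-reflection-group GKM statement, that $\cO(V)\otimes_{\cO(V)^\Gamma}\cO(V)$ equals the kernel of $\Delta-\mathrm{id}\otimes 1$, from scratch: either by realizing $\Gamma$ as the Weyl group of a root subsystem and applying \cite{GKM} to a finite flag variety $G_\Gamma/B_\Gamma$, or by a reflexivity/codimension-one argument over the regular ring $\cO(V)$. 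The paper instead makes no new GKM computation at all: it observes that $\cG^x_{(y,y)}=(\Gamma^x\cap\Gamma^y)\#\liet^*_y$ where $\Gamma^x\cap\Gamma^y$ is again a finite parabolic subgroup of $W^{aff}$, hence is the \emph{full} stabilizer $\Gamma^z$ of some closed point $z\in\liet^*$; translating by $z-y$ identifies $\cG^x_{(y,y)}$ with the stalk $\cG_{(z,z)}$ of the big affine groupoid, and since the formation of $\cT$, $\cI$ commutes with taking stalks and localization preserves admissible coequalizers, condition (1)(b) for $\cG^x_{(y,y)}$ is literally the localization at $(z,z)$ of the coequalizer statement already verified in paragraph \ref{adj}; the global statement for $\cG^x$ is then checked stalkwise at arbitrary $(y,z)$ by the torsor-translation trick (multiplying by a graph $\Gamma_\gamma$ with $\gamma(z)=y$). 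The paper's reduction is shorter and imports nothing beyond the affine case, but it is special to subgroups arising as point stabilizers of $W^{aff}$; your argument is more self-contained and applies to an arbitrary finite reflection group, at the cost of needing either the equivariant Borel presentation $H^\bullet_{T_\Gamma}(G_\Gamma/B_\Gamma)\cong\cO(V)\otimes_{\cO(V)^\Gamma}\cO(V)$ (which in turn rests on the fact, established in \ref{fpd}, that these $\Gamma$ really are Weyl groups of root subsystems of $\Phi$) or, in the self-contained variant, the \'etale-local identification of $V\times_{V//\Gamma}V$ with $V\times_{V//\langle s\rangle}V$ at the generic point of a reflection hyperplane -- a standard but nontrivial step that your sketch compresses into the phrase ``over the relevant discrete valuation ring.'' Either way the argument closes; only the bookkeeping differs.
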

\begin{proof} Note that:
$$
\begin{matrix}\cG^x_{(y,y)}&=&(\Gamma^x\cap\Gamma^y\# \liet^*)_{(y,y)}\\
&=&(\Gamma^x\cap\Gamma^y\# \liet^*)_{(y,\liet^*)}\\
&=&(\Gamma^x\cap\Gamma^y)\#\liet^*_y.\end{matrix}
$$
Since $\Gamma^x$, $\Gamma^y$ are both parabolic, so is $\Gamma^x\cap\Gamma^y$; in particular it is generated by reflections. Therefore the adjacency groupoid of $\Gamma^x\cap\Gamma^y\# \liet^*$ is finite flat over $\liet^*$ (with respect to either heads or tails); it is easy to see\footnote{Indeed let $f$ be a function on $\liet^*\times\liet^*$ which does not vanish at $(y,y)$. Consider the function $f^y:=\prod_{\gamma\in\Gamma^x\cap\Gamma^y}\gamma(f)$, where $\gamma$ acts on the second factor of $\liet^*$. This function also does not vanish at $(y,y)$, and its restriction to the adjacency groupoid $A$ in question coincides with the function 
$$
A\to \liet^*\times\liet^*\xrightarrow{\pi_1}\liet^*\xrightarrow{diag}\liet^*\times\liet^*\xrightarrow{f^y}\bA^1.
$$
Thus to invert $f$ on $A$ it suffices to invert some function which factors through $\pi_1$ as claimed.} that the stalk at $(y,y)$ of this adjacency groupoid is the same as its stalk at $(y,\liet^*)$. It follows that this stalk is finite flat over $\liet^*_y$, and also that it coincides with the adjacency groupoid of $\cG^x_{(y,y)}$; this gives condition (1)(a). We note that the adjacency groupoid of $\cG^x_{(y,y)}$ also coincides with $\cT^x_{(y,y)}$. Conditions (1)(c) and (2) are solved in the same way as in points (3) and (4) of paragraph \ref{adj}. This leaves condition (1)(b). Noting that the formation of $\cT^x$, $\cI^x$ from $\cG^x$ commutes with taking stalks, this amounts to checking that 
$$
\cI^x\times\cG^x\rightrightarrows\cG^x\to\cT^x
$$
and
$$
\cI^x_{(y,y)}\times\cG^x_{(y,y)}\rightrightarrows\cG^x_{(y,y)}\to\cT^x_{(y,y)}
$$
are coequalizer diagrams.

For the second diagram, let $z$ be any closed point of $\liet^*$ whose stabilizer in $W^{aff}$ is $\Gamma^x\cap\Gamma^y$. Then, translating by $z-y$, we see that the groupoid $\cG^x_{(y,y)}$ over $\liet^*_y$ is isomorphic to the groupoid $\cG^z_{(z,z)}$ over $\liet^*_z$. This is isomorphic to $\cG_{(z,z)}$ (over $\liet^*_z$). Noting now that the formation of $\cT$, $\cI$ from $\cG$ commutes with taking stalks, we see that the second diagram is isomorphic to the localization at $(z,z)$ of the big (admissible) coequalizer diagram
$$
\cI\times\cG\rightrightarrows\cG\to\cT
$$
and so is indeed a coequalizer diagram.

Consider now the first coequalizer diagram-to-be. To check that it is a coequalizer diagram it suffices to check at stalks of closed points. At any closed point $(y,z)$ say, we need to show that
$$
\cI^x_{(y,y)}\times\cG^x_{(y,z)}\rightrightarrows\cG^x_{(y,z)}\to\cT^x_{(y,z)}
$$
is a coequalizer diagram. If $y$, $z$ are not conjugate under $\Gamma^x$, this is vacuous; otherwise suppose $\gamma\in\Gamma^x$ with $\gamma(z)=y$. Then this diagram is isomorphic to
$$
\cI^x_{(y,y)}\times\cG^x_{(y,y)}\times\Gamma_\gamma\rightrightarrows\cG^x_{(y,y)}\times\Gamma_\gamma\to\cT^x_{(y,y)}\times\Gamma_\gamma
$$
under the multiplication map, which is isomorphic to
$$
\cI^x_{(y,y)}\times\cG^x_{(y,y)}\rightrightarrows\cG^x_{(y,y)}\to\cT^x_{(y,y)}
$$
under the projection, which we have just shown to be a coequalizer diagram.
\end{proof}

Consequently, an $\cO(\cG^x)$-comodule (over $\liet^*$) has at most one compatible structure of $\cO(\cT^x)$-comodule. Likewise, an $\cO(\cG^x)_{(y,y)}$-comodule (over $\liet^*_y$) has at most one compatible structure of $\cO(\cT^x)_{(y,y)}$-comodule. We are now ready to prove\footnote{and indeed, to formulate: in the statement of the theorem the uniqueness of a compatible descent datum is implicit.} the following:
\begin{thm}An object $V$ of $QCoh^{W^{aff}}(\liet^*)$ has trivial derived isotropy if and only if for every finite parabolic subgroup $\Gamma\subset W^{aff}$, the $\Gamma$-equivariant structure on $V$ is descent datum for $\liet^*\to\liet^*//\Gamma$.\end{thm}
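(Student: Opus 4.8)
The plan is to translate both conditions into statements about (derived) isotropy, settle the case of sheaves flat over $\liet^*$ directly, and then bootstrap to the general case through resolutions.

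First I would unwind both sides. By Proposition \ref{bigprop}(3) applied to $\cG=W^{aff}\#\liet^*$ (whose hypotheses are verified in \ref{adj}), the statement that $V$ has trivial derived isotropy means exactly that $V$ admits a resolution in $QCoh^{W^{aff}}(\liet^*)$ by sheaves flat over $\liet^*$ with trivial $\cI$-isotropy. For a finite parabolic $\Gamma\subset W^{aff}$, the preceding Lemma allows Proposition \ref{bigprop}(3) for $\cG^\Gamma=\Gamma\#\liet^*$, and Chevalley-Shephard-Todd together with faithfully flat descent identifies $Rep_{\liet^*}(\cT^\Gamma)$ with $QCoh(\liet^*//\Gamma)$; hence ``the $\Gamma$-equivariant structure on $V$ is a descent datum for $\liet^*\to\liet^*//\Gamma$'' means precisely ``$V|_\Gamma$ has trivial derived isotropy for $\cI^\Gamma$''. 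So the theorem reduces to the assertion that $V$ has trivial derived isotropy for $\cI$ if and only if $V|_\Gamma$ has trivial derived isotropy for $\cI^\Gamma$ for every finite parabolic $\Gamma$.

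Next I would handle the flat case. By Lemma \ref{lemma2}, for an object flat over $\liet^*$ the notion ``trivial derived isotropy'' collapses to ``trivial isotropy''. Trivial $\cI$-isotropy of a flat $V$ says that every $\gamma\in W^{aff}$ acts as the identity on the restriction of $V$ to its fixed locus $(\liet^*)^\gamma$; this is vacuous unless $\gamma$ has finite order, since an element of $W^{aff}$ has a fixed point on $\liet^*$ exactly when it has finite order, and a finite-order $\gamma$ lies in $\Gamma^x$ for $x\in(\liet^*)^\gamma$, which by the Proposition above is a finite parabolic. As the fibres of $\cI^\Gamma$ are the subgroups $\Gamma\cap\Gamma^x$, trivial $\cI$-isotropy of a flat $V$ is equivalent to trivial $\cI^\Gamma$-isotropy for all finite parabolic $\Gamma$, which by Lemma \ref{lemma2} again is equivalent to $V|_\Gamma\in Rep_{\liet^*}(\cT^\Gamma)=QCoh(\liet^*//\Gamma)$. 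Thus the two conditions agree on flat objects, and the flat $fpd$ objects are exactly the objects of $Rep_{\liet^*}(\cT)$ flat over $\liet^*$ --- in particular the free $Toda_1(G)$-modules, which are free over $\liet^*$ and $fpd$. The ``only if'' direction of the reduced assertion is now immediate: restriction along $\cG^\Gamma\hookrightarrow\cG$ is exact, preserves flatness over $\liet^*$, and weakens trivial $\cI$-isotropy to trivial $\cI^\Gamma$-isotropy (the fibres of $\cI^\Gamma$ sitting inside those of $\cI$), so one just restricts a resolution.

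For the ``if'' direction it suffices, by the flat case, to resolve a given $fpd$ $V$ by flat $fpd$ objects. The $fpd$ objects are closed under kernels of epimorphisms: restricting $0\to K\to P\to V\to 0$ to a finite parabolic $\Gamma$ and using that $\pi\colon\liet^*\to\liet^*//\Gamma$ is finite faithfully flat, so $\pi^*$ is exact and fully faithful onto descended sheaves, realizes $K|_\Gamma$ as the pullback of $\ker(\bar P\to\bar V)$. So it is enough to exhibit, for each $fpd$ $V$, an epimorphism onto $V$ from a flat $fpd$ object, and iterate. This last step is the main obstacle. The naive attempt --- inducing a descended cover $\pi_\Gamma^*\bar F\twoheadrightarrow V|_\Gamma$ up from a single stabilizer $\Gamma$ --- fails, because induction from a proper subgroup destroys descent (already the regular representation $(\cO(\liet^*)\#\bC\Gamma)\otimes_{\cO(\liet^*)}\cO(\liet^*)$ is not a pullback from $\liet^*//\Gamma$). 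Instead I would use that $Res^{\cT}_{\cG}\colon Rep_{\liet^*}(\cT)\to Rep_{\liet^*}(\cG)$ is faithful with right adjoint the cotensor $\Phi(V)=V\,\square_{\cO(\cG)}\cO(\cT)$, and that $Rep_{\liet^*}(\cT)$, a module category over $Toda_1(G)$, has enough projectives, all flat over $\liet^*$: then $Res^{\cT}_{\cG}$ of a projective cover of $\Phi(V)$, composed with the counit $Res^{\cT}_{\cG}\Phi(V)\to V$, is a flat $fpd$ object mapping to $V$, and everything comes down to showing this counit is an epimorphism when $V$ is $fpd$. That I would check one closed point $x\in\liet^*$ at a time: cotensor commutes with the flat localization at $x$, by the proof of the preceding Lemma the localized groupoids $\cG_{(x,x)}$, $\cT_{(x,x)}$ coincide with $\cG^x_{(x,x)}$, $\cT^x_{(x,x)}$ for the finite reflection group $\Gamma^x$, and $V$ being $fpd$ makes the localization of $V$ into a $\cT^x_{(x,x)}$-comodule, so the localized counit is split by a triangle identity. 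Granting this, the resolution is built and the theorem follows.
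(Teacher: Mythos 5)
Your translation of the two conditions, your handling of the flat case, the ``only if'' direction, and the observation that kernels of epimorphisms between fpd objects are again fpd (via full faithfulness of descent for each finite parabolic) are all correct, and reducing the theorem to producing, for each fpd object $V$, an epimorphism onto $V$ from a flat object with trivial isotropy is a legitimate alternative strategy. But the step you yourself flag as the main obstacle is where the argument breaks, and it is precisely the point where the paper does something different: rather than invoking any right adjoint to $Res^{\cG}_{\cT}$, the paper constructs the $\cO(\cT)$-comodule structure on $V$ directly, covering each finite union of graphs $S_i\subset\cT$ by open sets $U^{x,y}_i$ contained in the torsors $\cT^{y\to x}$ over the finite groupoids $\cT^x$, defining $V\to V\otimes\cO(\cT^{y\to x})$ from the parabolic descent data, and gluing via a triple-overlap compatibility check before passing to the limit over $i$.

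The gap in your version is the claim that the counit $\Phi(V)\to V$ is an epimorphism because this can be checked after localizing at a closed point $x$ and identifying the localized groupoids with those of the finite reflection group $\Gamma^x$. Two things go wrong. First, $\Phi(V)$ lives inside $\nlim(V\otimes\cO(\cT))$, an infinite inverse limit (the pro-pieces being the $\cO(\cT)_\lambda=H^\bullet_{T^\vee\times\bG_m}(\overline{Fl_\lambda})_{\hbar=1}$), and localization at $x$ does not commute with $\nlim$; so ``cotensor commutes with flat localization'' is unjustified and in general false, and an element of the localized cotensor need not lift to $\Phi(V)_x$. (The existence and adjunction property of $\Phi$ itself in this ind-pro setting would also need to be established; the paper only constructs $Coind_X^{\cG}$.) Second, the localization relevant to surjectivity of a map of sheaves on $\liet^*$ is along the whole fibre $t^{-1}(x)$, which meets $\cT$ in the infinite discrete set $\{(\gamma x,x)\}_{\gamma\in W^{aff}}$, not only at $(x,x)$; the resulting localized groupoid is not $\cT^x_{(x,x)}$ but an infinite union of torsors over it, and the triangle-identity splitting you want is only available once $V$ carries a global $\cO(\cT)$-coaction --- which is exactly what is to be proved. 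Repairing this step essentially forces you back to the paper's gluing construction of $V\to V\otimes\cO(\cT)$ out of the local data $V\to V\otimes\cO(\cT^x)$.
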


\begin{proof}That trivial derived isotropy implies descent for all finite parabolic subgroups is immediate. Conversely, assume the $W^{aff}$-equivariant quasicoherent sheaf $V$ has descent for all finite parabolic subgroups. It means that for each closed point $x\in\liet^*$ there is a unique dashed comultiplication making the diagram
$$
\begin{matrix}V&\to&V\otimes\cO(\cG)\\
\dashdownarrow&&\downarrow\\
V\otimes\cO(\cT^x)&\to& V\otimes\cO(\cG^x)\end{matrix}
$$
commutative. Also for any closed point $y$ the composition $V\to V\otimes\cO(\cT^x)\to V\otimes\cO(\cT^x_{(y,y)})$ is the unique comodule map making the diagram
$$
\begin{matrix}V&\to&V\otimes\cO(\cG)\\
\dashdownarrow&&\downarrow\\
V\otimes\cO(\cT^x_{(y,y)})&\to& V\otimes\cO(\cG^x_{(y,y)})\end{matrix}
$$
commutative. Denote by $\cT_{\liet^*}$ the stalk of $\cT$ at the diagonal $\liet^*\subset\liet^*\times\liet^*$. Choose an enumeration $\gamma_1,\gamma_2,\ldots$ of $W^{aff}$. Set $S_i$ to be the closed subscheme of $\cT$ which is the union of the graphs $\Gamma_{\gamma_1},\ldots,\Gamma_{\gamma_i}$. These exhaust $\cT$. Write also $\Omega_i=\{\gamma_1,\ldots,\gamma_i\}$. For any closed point $(x,y)$ in $\liet^*\times\liet^*$ let us write $\cT^{y\to x}$ for the union of graphs passing through $(x,y)$. This is a torsor for $\cT^x$ in the sense that choosing any component of $\cT^{y\to x}$ gives an isomorphism with $\cT^x$; likewise it is a torsor for $\cT^y$. Similarly write $\Gamma^{y\to x}$ for subset of $W^{aff}$ consisting of all $\gamma$ such that $\gamma(y)=x$, a torsor for both $\Gamma^x$ and $\Gamma^y$. We construct the map $V\to V\otimes\cO(\cT)$ as follows. For each subscheme $S_i$ we form the open cover
$$
\{U^{x,y}_i=S_i - \bigcup_{\gamma\in\Omega_i-\Gamma^{y\to x}}\Gamma_\gamma\}_{(x,y)}.
$$
Each $U^{x,y}_i$ is an open subscheme of a closed subscheme of $\cT^{y\to x}$, and so we define the map
$$
V\to V\otimes \cO(\cT^{y\to x})\to V\otimes\cO(U^{x,y}_i).
$$
Here the map $V\to V\otimes \cO(\cT^{y\to x})$ by definition equals either $0$ (if $\cT^{y\to x}$ is empty) or otherwise the composition
$$
V\to V\otimes\cO(\cT^y)\to V\otimes\cO(\Gamma_\gamma)\otimes\cO(\cT^y)\xrightarrow{\sim} V\otimes \cO(\cT^{y\to x})
$$
for any choice $\gamma\in \Gamma^{y\to x}$ \footnote{Here the first morphism is the comultiplication given by the hypothesis of finite parabolic descent; the second morphism is induced by the morphism $V\to V\otimes\cO(\Gamma_\gamma)$ determined by the $W^{aff}$-equivariant structure; the third morphism is the torsor isomorphism determined by $\gamma$. The composition is independent of $\gamma$.}. To see that these glue together, it suffices to check that for every three closed points $(x_1,x_2),(y_1,y_2),(z_1,z_2)$ of $\liet^*\times\liet^*$ such that $(z_1,z_2)\in U^{x_2\to x_1}_i\cap U^{y_2\to y_1}_i$, the two resulting maps
$$
V\to V\otimes \cO(\cT^{x_2\to x_1})\to V\otimes\cO(U^{x_1,x_2}_i)\to V\otimes\cO((S_i)_{(z_1,z_2)})
$$
and
$$
V\to V\otimes \cO(\cT^{y_2\to y_1})\to V\otimes\cO(U^{y_1,y_2}_i)\to V\otimes\cO((S_i)_{(z_1,z_2)})
$$
coincide. Now $(z_1,z_2)\in U^{x_2\to x_1}_i\cap U^{y_2\to y_1}_i$ implies $\Gamma^{z_2\to z_1}\cap\Omega_i\subset \Gamma^{x_2\to x_1}\cap\Gamma^{y_2\to y_1}$. It follows that our two morphisms can be written as
$$
V\to V\otimes \cO(\cT^{x_2\to x_1})\to V\otimes\cO( \cT^{x_2\to x_1,y_2\to y_1,z_2\to z_1})\to V\otimes\cO((S_i)_{(z_1,z_2)})
$$
and
$$
V\to V\otimes \cO(\cT^{y_2\to y_1})\to V\otimes\cO( \cT^{x_2\to x_1,y_2\to y_1,z_2\to z_1})\to V\otimes\cO((S_i)_{(z_1,z_2)})
$$
where $\cT^{x_2\to x_1,y_2\to y_1,z_2\to z_1}$ denotes the union of those graphs which pass through all three points $(x_1,x_2),(y_1,y_2),(z_1,z_2)$. This is a torsor for the adjacency group $\cT^{x_2,y_2,z_2}$ of the reflection group $\Gamma^{x_2}\cap\Gamma^{y_2}\cap\Gamma^{z_2}$. These morphisms coincide, since either both are $0$ (if $\cT^{x_2\to x_1,y_2\to y_1,z_2\to z_1}$ is empty) or in both cases the induced morphism $V\to V\otimes\cO( \cT^{x_2,y_2,z_2})$ is the unique comodule structure which restricts to the given $\cO(\cG^{x_2,y_2,z_2})$-comodule structure.

We have constructed the maps $V\to V\otimes\cO(S_i)$, which have the property that each composition $V\to V\otimes\cO(S_i)\to V\otimes\cO((S_i)_{(x,y)})$ factors as $V\to V\otimes\cO(\cT^{y\to x})\to V\otimes\cO((S_i)_{(x,y)})$, and it follows that they are compatible as $i$ ranges to $\infty$, so that we get a morphism $V\to V\otimes\cO(\cT)$. That this is a comodule structure can be checked on stalks, where it holds by construction.\end{proof}

\subsection{}We find it interesting to note that we may view $Toda_1(G)$-mod as being made up of the various $QCoh(\liet^*//\Gamma)$, glued together along their common ramified cover $\liet^*$. We do not yet know what to make of this.

\


\begin{thebibliography}{100}

\bibitem{kost}  B. Kostant, Quantization and representation theory, in Representation theory of Lie groups, London
Math. Soc. Lect. Note Series 34 (1979), 287-316.\
\bibitem{Groups}  M.Demazure, P.Gabriel, Groupes algebriques. Tome I: G\'{e}om\'{e}trie algebrique, g\'{e}n\'{e}ralit\'{e}s,
groupes commutatifs, Masson \& Cie, Editeur, Paris; North-Holland Publishing Co., Amsterdam (1970).\
\bibitem{BF} R. Bezrukavnikov, M. Finkelberg, Equivariant Satake category and Kostant-Whittaker reduction, arXiv:0707.3799 (2007).\
\bibitem{GKM} M. Goresky, R. Kottwitz, and R. MacPherson, Equivariant cohomology, Koszul duality,
and the localization theorem, Invent. Math. 131 (1998).\
\bibitem{Ginzburg}  V. Ginzburg, Perverse sheaves on a Loop group and Langlands' duality, arXiv:alg-geom/9511007 (1995)\
\bibitem{Lu} G. Lusztig, Singularities, character formulas, and a q-analogue for weight multiplicities,
in Analyse et topologie sur les espaces singuliers, Ast\'{e}risque 101-102 (1982) 208-229.\
\bibitem{humph} James E. Humphreys. Reflection groups and Coxeter groups, volume 29 of Cambridge Studies
in Advanced Mathematics. Cambridge University Press, Cambridge, 1990.\
\bibitem{C} C. Chevalley, Invariants of finite groups generated by reflections. Amer. J. Math. 77
(1955), 778-782.
\bibitem{ST} G. C. Shephard and J. A. Todd, Finite unitary reflection groups. Canad. J. Math. 6
(1954), 274-304.\
\end{thebibliography}
\end{document}